\title{Representations of Symmetric Implication Algebras as Multicubes}
\author{Colin G. Bailey}
\address{School of Mathematics, Statistics \& Operations Research\\
Victoria University of Wellington\\
Wellington, New Zealand\\
}
\email{Colin.Bailey@vuw.ac.nz}
\author{Joseph S.Oliveira}
\address{
Pacific Northwest National Laboratories\\
Richland,  WA\\
U.S.A.}
\email{Joseph.Oliveira@pnl.gov}
\date{2009, February 5}
\subjclass{06A12, 08A30, 08A05}
\keywords{cubes, implication algebras, symmetric implication algebras}
\let\Cal\mathcal
\let\rsf\mathscr
\def\one{{\mathbf 1}}
\def\zero{{\mathbf 0}}
\def\caret{\mathbin{\hat{\hphantom{i}}}}
\def\eqcl[#1]{\pmb{[}#1\pmb{]}}
\def\leftGen{{[\kern-1.1pt[}}
\def\rightGen{{]\kern-1.1pt]}}
\def\s#1{\sigma({{#1}})}
\def\G#1{\Gamma({{#1}})}
\def\os#1{\overline{\sigma({{#1}})}}
\def\env{\operatorname{env}}
\providecommand{\meet}{\mathbin{\wedge}}
\providecommand{\join}{\mathbin{\vee}}
\newcommand{\comp}[1]{\overline{#1}}
\newcommand{\card}[1]{\left| #1\right|}
     \def\restrict{\hbox{\rm\kern0.166em\accent"12\kern-0.536em$\vert$\kern0.3em}}%
     \def\restrict{\upharpoonright}%
\def\twoSet#1#2{\left\{%
\vphantom{#2}#1\thinspace\right|\nolinebreak[3]\left.%
  #2%
  \vphantom{#1}%
  \right\}%
}
\def\oneSet#1{\left\lbrace#1\right\rbrace}
\newif\if@nstr
\def\setstrfalse{\let\if@nstr=\iffalse}
\def\setstrtrue{\let\if@nstr=\iftrue}
\def\@nstr #1#2{
\def\@@nstr ##1#1##2##3\@@nstr{\ifx
\@nstr ##2\setstrfalse \else \setstrtrue \fi }
\@@nstr #2#1\@nstr \@@nstr}
\def\@separate#1|#2@{\setFront{#1}\setBack{#2}}
\def\lb#1\rb{\@nstr|{#1} \if@nstr \@separate#1 @ \twoSet{\@setFront}{\@setBack}%
\else \@separate |{#1 }@ \oneSet{\@setBack}\fi%
}
\def\setFront#1{\def\@setFront{#1}}
\def\setBack#1{\def\@setBack{#1}}
\def\Set#1{\lb{#1}\rb}
\newcommand{\bracket}[1]{\left\langle #1\right\rangle}
\def\oneBrk#1{\left\langle#1\right\rangle}
\def\twoBrk#1#2{\left\langle%
\vphantom{#2}#1\thinspace\right|\nolinebreak[3]\left.%
  #2%
  \vphantom{#1}%
  \right\rangle%
}
\def\brk<#1>{\@nstr|{#1} \if@nstr \@separate#1 @ \twoBrk{\@setFront}{\@setBack}%
\else \@separate |{#1 }@ \oneBrk{\@setBack}\fi%
}
\def\lemref#1{\normalfont{lemma}~\ref{#1}}
\def\propref#1{\normalfont{proposition}~\ref{#1}}
\theoremstyle{plain}
\newtheorem{thm}{Theorem}[section]
\newtheorem{lem}[thm]{Lemma}
\newtheorem{cor}[thm]{Corollary}
\newtheorem{prop}[thm]{Proposition}
\newtheorem{defn}[thm]{Definition}
\theoremstyle{remark}
{}
{\newtheorem{example}{Example}[section]}
{}
{\newtheorem{rem}{Remark}[section]}
\newcommand{\C}{{\mathbbm{C}}}
\newcommand{\Z}{{\mathbb{Z}}}
\newcommand{\C}{{\mathbb{C}}}}
\begin{document}
    \begin{abstract}
	We show that the 
	variety of symmetric implication algebras is generated from
	cubic implication algebras and Boolean algebras. We do this 
	by developing the notion of a locally symmetric implication 
	algebra that has properties similar to cubic implication 
	algebras and provide a representation of these algebras 
	as subalgebras of a product of a cubic implication algebra 
	and an implication algebra. We then show that every symmetric implication 
	algebra is covered by a locally symmetric implication algebra.
    \end{abstract}
\maketitle
\section{Introduction}
In \cite{SA} the notion of symmetric implication algebra was 
defined and it was shown that cubic lattices in the sense of 
Metropolis and Rota (\cite{MR:cubes}) are closely related to symmetric 
implication algebras. The reflection operator $\Delta$ is shown to be 
a symmetry operator. 

In this paper we consider some aspects of the converse. We first 
define 
a subclass of symmetric implication algebras that satisfy a weak 
version of the Metropolis-Rota axiom. These are shown to arise as 
nice subalgebras of multicubic implication algebras (defined below).
In the last section,  we
give  an envelope construction showing that every 
symmetric implication algebra embeds into a minimal multicubic 
algebra in an extremely nice way.  This shows that the variety of 
symmetric implication algebras is generated by cubic implication 
algebras together with Boolean algebras.

\subsection{Cubic implication algebras}

A cubic implication algebra is an algebraic generalization of the face lattice of 
a finite-dimensional cube based upon work of Metropolis and Rota -- 
see \cites{MR:cubes, BO:eq}. 

We recall some definitions. 
\begin{defn}
    A \emph{cubic implication algebra} is a join semi-lattice with one and a binary 
    operation $\Delta$ satisfying the following axioms:
    \begin{enumerate}[a.]
        \item  if $x\le y$ then $\Delta(y, x)\join x = y$;
        
        \item  if $x\le y\le z$ then $\Delta(z, \Delta(y, x))=\Delta(\Delta(z, 
        y), \Delta(z, x))$;
        
        \item  if $x\le y$ then $\Delta(y, \Delta(y, x))=x$;
        
        \item  if $x\le y\le z$ then $\Delta(z, x)\le \Delta(z, y)$;
        
        \item[] Let $xy=\Delta(1, \Delta(x\join y, y))\join y$ for any $x$, $y$ 
        in $\mathcal L$. Then:
        
        \item  $(xy)y=x\join y$;
        
        \item  $x(yz)=y(xz)$;
    \end{enumerate}
\end{defn} 

It is not difficult to show that this structure with $x\to y= xy$ is 
an implication algebra. 

As can be seen from the axioms, $\Delta(x, y)$ is only of interest if 
$x\geq y$. In the usual examples the most natural way to define 
$\Delta$ produces a partial operation only defined for  $x\geq y$.
The normal way we extend to arbitrary $x$ and $y$ is by letting
$$
\Delta(x, y)=\Delta(x\join y, y).
$$
In \cite{BO:eq} we showed that
\begin{equation}\label{eq:one}
\Delta(x, y)=x\meet \Delta(\one,  xy)
\end{equation}
whenever $x\geq y$.

\begin{defn}
    An \emph{MR-algebra} is a cubic implication algebra satisfying the MR-axiom:\\
    if $a, b<x$ then 
    \begin{gather*}
        \Delta(x, a)\join b<x\text{ iff }a\meet b\text{ does not exist.}
    \end{gather*}
\end{defn}

\begin{example}
    Let $X$ be any set,  and 
    $$
    \rsf S(X)=\Set{\brk<A, B> | A, B\subseteq X\text{ and }A\cap 
    B=\emptyset}.
    $$
    Elements of $\rsf S(X)$ are called \emph{signed subsets} of $X$.
    The operations are defined by 
    \begin{align*}
	1&=\brk<\emptyset,  \emptyset>\\
	\brk<A, B>\join\brk<C, D>&=\brk<A\cap C,  B\cap D>\\
	\Delta(\brk<A, B>, \brk<C, D>)&=\brk<A\cup D\setminus B, 
	B\cup C\setminus A>.
\end{align*}
These are all atomic MR-algebras. The face-poset of an $n$-cube is 
naturally isomorphic to a signed set algebra. 
\end{example}

\begin{example}
    Let $B$ be a Boolean algebra, then the \emph{interval algebra} of $B$ is
$$
\rsf I(B)=\Set{[a, b] | a\le b \text{ in }B}
$$
ordered by inclusion. The operations are defined by
\begin{align*}
	1&=[0, 1]\\
	[a, b]\join[c, d]&=[a\meet c, b\join d]\\
	\Delta([a, b], [c, d])&=[a\join(b\meet\comp d), b\meet(a\join\comp c)].
\end{align*}
These are all atomic MR-algebras. For further details see \cite{BO:eq}.

We note that $\rsf S(X)$ is isomorphic to $\rsf I(\wp(X))$. 
\end{example}

\begin{defn}\label{def:caret}
    Let $\mathcal L$ be a cubic implication algebra. Then for any $x, y\in\mathcal L$ 
    we define the (partial) operation $\caret$ (\emph{caret}) by:
    $$
        x\caret y=x\meet\Delta(x\join y, y)
    $$
    whenever this meet exists. 
\end{defn}

From \eqref{eq:one} we have 
\begin{equation}\label{eq:two}
x\caret y= x\meet\Delta(x\join y, y)= x\meet \Delta(\one, xy).
\end{equation}

\begin{lem}
    If $\mathcal L$ is a cubic implication algebra then 
    $\mathcal L$ is an MR-algebra iff the caret operation is total.
\end{lem}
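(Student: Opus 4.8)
The plan is to prove the two implications separately; in each, the real content is a single well-chosen substitution and the rest reduces to the cubic implication algebra axioms. Two elementary facts get used throughout: $\Delta(x, y) \le x$ whenever $y \le x$ (immediate from axiom~(a)), and $\Delta(z, z) = z$ for all $z$ (a short consequence of axioms~(b) and~(c)). The key structural observation is that, for fixed $x$, the map $y \mapsto \Delta(x, y)$ restricts to an order-automorphism of the principal ideal $\{\, y : y \le x \,\}$: it sends that ideal into itself by axiom~(a), is an involution by axiom~(c), and is order-preserving by axiom~(d); hence it carries any meet or join existing below $x$ to the corresponding meet or join of the images.

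Assume first that $\mathcal L$ is an MR-algebra and fix $x, y$; I want $x \caret y = x \meet \Delta(x \join y, y)$ to exist. Put $z = x \join y$. If $y \le x$ then $z = x$ and $x \caret y = x \meet \Delta(x, y) = \Delta(x, y)$, which exists; if $x \le y$ then $z = y$ and $x \caret y = x \meet \Delta(y, y) = x \meet y = x$. Otherwise $x$ and $y$ are incomparable, so $x < z$ and $y < z$, and then also $\Delta(z, y) < z$ (if $\Delta(z, y) = z$, applying $\Delta(z, -)$ would give $y = \Delta(z, z) = z$). Now I would instantiate the MR-axiom inside $z$ at the pair $a = \Delta(z, y)$, $b = x$, both strictly below $z$: it asserts $\Delta\bigl(z, \Delta(z, y)\bigr) \join x < z$ iff $\Delta(z, y) \meet x$ fails to exist. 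But $\Delta(z, \Delta(z, y)) = y$ by axiom~(c) and $y \join x = z$, so the left-hand side is the false statement $z < z$; hence $\Delta(z, y) \meet x$, which is exactly $x \caret y$, exists.

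For the converse, assume caret is total and fix $a, b < x$; I must check both halves of the MR-biconditional. The implication ``$\Delta(x, a) \join b < x \Rightarrow a \meet b$ does not exist'' holds already in any cubic implication algebra: arguing contrapositively, if $m$ is a common lower bound of $a$ and $b$ then $m \le b \le \Delta(x, a) \join b$, and by order-preservation $\Delta(x, m) \le \Delta(x, a) \le \Delta(x, a) \join b$, so $x = m \join \Delta(x, m) \le \Delta(x, a) \join b$, whence $\Delta(x, a) \join b = x$. For the other implication, suppose $\Delta(x, a) \join b = x$. Totality of caret hands us the element $\Delta(x, a) \caret b = \Delta(x, a) \meet \Delta\bigl(\Delta(x, a) \join b,\ b\bigr) = \Delta(x, a) \meet \Delta(x, b)$, so this meet exists; applying the order-automorphism $\Delta(x, -)$ and using $\Delta(x, \Delta(x, a)) = a$ and $\Delta(x, \Delta(x, b)) = b$, we get that $a \meet b = \Delta\bigl(x,\ \Delta(x, a) \meet \Delta(x, b)\bigr)$ exists.

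The hard part is really just spotting those two moves: instantiating the MR-axiom at $(\Delta(z, y), x)$ rather than at $(x, y)$, and recognising that totality of caret delivers exactly the meet $\Delta(x, a) \meet \Delta(x, b)$, which the reflection $\Delta(x, -)$ then transports back to $a \meet b$. Everything else — the two elementary identities for $\Delta$, and the fact that $\Delta(x, -)$ preserves existing meets on $\{\, y : y \le x \,\}$ — is routine and I would dispatch it in a couple of lines.
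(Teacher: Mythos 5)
Your proof is correct in both directions. The key moves — instantiating the MR-axiom at the pair $(\Delta(z,y), x)$ so that the left-hand side $\Delta(z,\Delta(z,y))\join x = y\join x = z$ is trivially not $<z$, and in the converse using totality to obtain $\Delta(x,a)\meet\Delta(x,b)$ and then transporting that meet back via the order-involution $\Delta(x,-)$ on the ideal below $x$ — are exactly the right ones, and the supporting facts you invoke ($\Delta(x,y)\le x$ for $y\le x$, $\Delta(z,z)=z$, and $\Delta(x,-)$ being an order-automorphism of $\{y:y\le x\}$) all follow quickly from the axioms as you indicate. The case split when $x$ and $y$ are comparable is also handled correctly. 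The paper itself gives no proof of this lemma, deferring instead to theorem~12 of the cited reference \cite{BO:fil}, so there is no in-text argument to compare against; your argument is a self-contained and clean direct proof of the statement.

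One small nit: you attribute $\Delta(z,z)=z$ to axioms (b) and (c), but the shortest derivation I see uses (a), (c), and (d) — from (a) get $\Delta(z,z)\le z$, then apply (d) to $\Delta(z,z)\le z\le z$ and (c) to conclude $z=\Delta(z,\Delta(z,z))\le\Delta(z,z)$. This does not affect the validity of anything downstream.
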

\begin{proof}
    See \cite{BO:fil} theorem 12.
\end{proof}

Investigating the structure of cubic implication algebras leads us to consider 
certain important subalgebras. 

\begin{defn}
    Let $\mathcal L$ be a cubic implication algebra, and $x\in \mathcal L$. Then
    the \emph{localization of $\mathcal L$ at $x$} is the set
    $$
        \mathcal L_{x}=\Set{y\in\mathcal L | \exists z\geq x\ \Delta(z, 
        x)\le y}.
    $$
\end{defn}

\begin{lem}
    Let $\mathcal L$ be a cubic implication algebra, and $x\in \mathcal L$. Then
    $\mathcal L_{x}$ is the least upwards 
    closed subalgebra of $\mathcal L$ that contains $x$.
\end{lem}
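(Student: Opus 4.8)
The plan is to verify four things: that $\mathcal L_x$ is upwards closed, that it contains $x$, that it is a subalgebra, and that it is contained in every upwards closed subalgebra of $\mathcal L$ containing $x$. Granting these, $\mathcal L_x$ is an upwards closed subalgebra containing $x$ that sits below every other one, which is exactly the assertion. Everything here is routine bookkeeping except the closure of $\mathcal L_x$ under $\Delta$.

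For the bookkeeping: taking $z=x$ in the definition and invoking axiom (a) gives $\Delta(x,x)\join x=x$, so $\Delta(x,x)\le x$ and $x\in\mathcal L_x$; taking $z=\one$ gives $\one\in\mathcal L_x$. If $\Delta(z,x)\le y$ (with $z\ge x$) and $y\le y'$, then $\Delta(z,x)\le y'$, so $\mathcal L_x$ is upwards closed; and then $\mathcal L_x$ is closed under $\join$ for free, since $y_1\join y_2\ge y_1\in\mathcal L_x$. For minimality, let $M$ be an upwards closed subalgebra with $x\in M$ and take $y\in\mathcal L_x$, say $\Delta(z,x)\le y$ with $z\ge x$: then $z\in M$ (as $M$ is upwards closed and contains $x$), hence $\Delta(z,x)\in M$ (as $M$ is a subalgebra), hence $y\in M$ (as $M$ is upwards closed); so $\mathcal L_x\subseteq M$.

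The remaining point, and the hard one, is closure under $\Delta$. Since $\Delta(y_1,y_2)=\Delta(y_1\join y_2,y_2)$ and $\mathcal L_x$ is already closed under $\join$, it suffices to treat the case $v\le u$ with $u,v\in\mathcal L_x$ and show $\Delta(u,v)\in\mathcal L_x$. My intended route is first to prove the characterization
\[
  \mathcal L_x=\{\,y:\ \Delta(x\join y,\,x)\le y\,\},
\]
whose inclusion ``$\supseteq$'' is immediate (take $z=x\join y$), and whose inclusion ``$\subseteq$'' is the real work: given $z\ge x$ with $\Delta(z,x)\le y$ one has $x\le z\le x\join y$, since $z=\Delta(z,x)\join x\le y\join x$ by axiom (a), and from this chain I would deduce $\Delta(x\join y,x)\le y$ by an axiom chase using (b), (d) and the identity \eqref{eq:one}. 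Once the characterization is available, closure under $\Delta$ follows cleanly: with $w=\Delta(u,v)$ we have $w\le u$ and $w\join v=u$, so $x\join w\le x\join u$ and $x\join v\le x\join u$, and I would transport the known inequalities $\Delta(x\join u,x)\le u$ and $\Delta(x\join v,x)\le v$ along the chains $x\le x\join w\le x\join u$ and $x\le x\join v\le x\join u$ by axiom (b) to obtain $\Delta(x\join w,x)\le w$, i.e.\ $w\in\mathcal L_x$. The main obstacle is precisely the ``$\subseteq$'' half of the characterization: once membership in $\mathcal L_x$ can be tested by the single canonical reflection $\Delta(x\join y,x)$ rather than by an unspecified witness $z$, each closure property reduces to a short computation with axiom (b).
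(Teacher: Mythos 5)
The paper itself gives no proof here; it simply cites \cite{BO:eq}, so there is no internal argument to compare yours against. Judged on its own terms, your plan has the right shape, and the routine parts (that $\mathcal L_x$ is upwards closed, contains $x$ and $\one$, is closed under $\join$, and is contained in any upwards closed subalgebra containing $x$) are all correct. The minimality argument in particular is exactly right: from $\Delta(z,x)\le y$ with $z\ge x$ one gets $z\in M$, then $\Delta(z,x)\in M$, then $y\in M$.

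The gap is that the two steps you yourself flag as ``the real work'' are not actually carried out, and it is not clear the sketched route goes through. First, for the inclusion $\mathcal L_x\subseteq\{y:\Delta(x\join y,x)\le y\}$, you correctly reduce to the situation $x\le z\le x\join y$ with $\Delta(z,x)\le y$, but then only promise an ``axiom chase using (b), (d) and \eqref{eq:one}.'' Axiom (d) gives $\Delta(x\join y,x)\le\Delta(x\join y,z)$, which is a bound in the wrong direction (by $\Delta(x\join y,z)$, not by $y$), and it is not evident how (b) converts this into the needed inequality; the claim needs an actual computation. Second, even granting the characterization, the closure of $\mathcal L_x$ under $\Delta$ is not a short consequence: with $w=\Delta(u,v)$ and $v\le u$ you propose to ``transport'' $\Delta(x\join u,x)\le u$ and $\Delta(x\join v,x)\le v$ along the chains $x\le x\join w\le x\join u$ and $x\le x\join v\le x\join u$ via axiom (b), but axiom (b) requires a nested triple $a\le b\le c$ and there is no obvious way to arrange $x\join v$, $x\join w$, $x\join u$ into the hypothesis of (b) so that the conclusion is about $\Delta(x\join w,x)$ and bounded by $w$. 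Both of these are exactly where the substance of the lemma lives, and as written they are assertions, not proofs.

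A further minor point: you should also confirm closure under the derived operation $\to$ (or note it follows once $\join$ and $\Delta$ are handled via the definition $xy=\Delta(\one,\Delta(x\join y,y))\join y$); you mention $\to$ only implicitly by treating ``subalgebra'' as closure under $\join$ and $\Delta$, which is the right convention for cubic implication algebras but deserves a sentence.
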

\begin{proof}
    See \cite{BO:eq}. 
\end{proof}

\subsection{Symmetric Implication Algebras}
\begin{defn}\label{def:mcAlg}
	A \emph{symmetric implication algebra} is an implication 
	algebra $\mathcal M$ with a distinguished automorphism $T$ of order two.
\end{defn}

Because of the nature of implication algebras it can be shown that 
if $\tau\colon 2\times 2\to 2\times 2$ is the twist operator, then 
$A_{4}=\brk<2\times 2, \tau>$ and its subalgebras 
$A_{3}=\brk<\Set{{\brk<0,0>}, {\brk<0,1>}, {\brk<1,0>}}, \tau>$ and 
$A_{2}=\brk<\Set{{\brk<0,0>}, {\brk<1,1>}}, \tau>$ are the only 
subdirectly irreducible symmetric implication algebras. 

We let $\mathcal A_{i}$ denote the variety generated by $A_{i}$

\begin{thm}
    \begin{enumerate}[(a)]
        \item  The identity $x\to T(x)=1$ is an equational basis for 
	$\mathcal A_{2}$.
    
        \item  The identity $x\join T(x)=1$ is an equational basis 
	for $\mathcal A_{3}$.
    
        \item  The identity $(x\to T(x))\join y\join T(y)=1$ is an 
	equational basis for $\mathcal A_{2}\join \mathcal A_{3}$.
    \end{enumerate}
\end{thm}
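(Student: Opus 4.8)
The plan is to deduce all three statements from the classification of the subdirectly irreducible symmetric implication algebras recalled above — that $A_2$, $A_3$, $A_4$ are the only ones — together with Birkhoff's subdirect representation theorem. The key observation is that a subvariety $\mathcal V$ of the variety of symmetric implication algebras is completely determined by the set $\mathcal V\cap\{A_2,A_3,A_4\}$: every member of $\mathcal V$ is a subdirect product of subdirectly irreducible members of $\mathcal V$, and these can only be copies of $A_2$, $A_3$ or $A_4$, so $\mathcal V$ is exactly the variety generated by whichever of the $A_i$ lie in it. Hence to show that an identity $\varepsilon$ is an equational basis for $\mathcal A_{i_1}\join\dots\join\mathcal A_{i_k}$ it suffices to check (i) each $A_{i_j}$ satisfies $\varepsilon$, which gives $\mathcal A_{i_1}\join\dots\join\mathcal A_{i_k}\subseteq\mathcal V(\varepsilon)$, the subvariety axiomatised by $\varepsilon$; and (ii) none of the remaining $A_i$ satisfies $\varepsilon$, which — since any subdirectly irreducible quotient of a model of $\varepsilon$ is again a model of $\varepsilon$ — forces every member of $\mathcal V(\varepsilon)$ to be a subdirect product of copies of the $A_{i_j}$, hence to lie in $\mathcal A_{i_1}\join\dots\join\mathcal A_{i_k}$.

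Both (i) and (ii) reduce to finite computations in $A_2$, $A_3$, $A_4$, using that $u\to v=1$ iff $u\le v$ in any implication algebra. For (a): in $A_2$ the automorphism $T$ is the identity, so $x\to T(x)=x\to x=1$; conversely $x\to T(x)=1$ says $x\le T(x)$, and applying $T$ gives $T(x)\le x$, so the identity forces $T=\mathrm{id}$ and therefore fails in $A_3$ (where $T(a)=b\ne a$) and in $A_4$ (where $T$ interchanges the two coatoms). For (b): $A_3$ has exactly two elements $a$, $T(a)$ below $1$ and $a\join T(a)=1$, so $x\join T(x)=1$ holds; but $A_2$ and $A_4$ each contain a $T$-fixed element $z\ne 1$, for which $z\join T(z)=z\ne 1$. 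For (c): the identity holds in $A_2$ because its first disjunct is already $1$, and in $A_3$ because $y\join T(y)=1$ there; it fails in $A_4$ by taking $x$ with $T(x)\ne x$, so that $x\to T(x)\ne 1$, and $y$ the least element, so that $y\join T(y)=y$ — the join of these two is still $\ne 1$.

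The only point needing any care is (ii): one must invoke that an identity valid in an algebra is valid in all of its homomorphic images, so that the subdirectly irreducible quotients produced by the subdirect representation theorem are themselves models of $\varepsilon$, which is what restricts them to the admitted $A_{i_j}$. Beyond that everything is direct computation in algebras of size at most four, so the genuine content lies in the cited description of the subdirectly irreducibles; granting that, parts (a)--(c) follow.
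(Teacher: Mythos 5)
The paper states this theorem without proof, so there is no argument in the text to compare against; your job is simply to check that the argument you supply is sound, and it is. Granting the paper's assertion that $A_2$, $A_3$, $A_4$ exhaust the subdirectly irreducible symmetric implication algebras, your reduction is exactly the right one: a subvariety is closed under homomorphic images and subdirect products, so it is pinned down by which of $A_2$, $A_3$, $A_4$ it contains, and membership of each $A_i$ in the equationally defined class is a finite computation. Your individual checks are all correct: $T=\mathrm{id}$ on $A_2$ gives (a) there and $x\le T(x)$ forcing $T=\mathrm{id}$ rules out $A_3$, $A_4$; the two non-top elements of $A_3$ join to the top, while the bottom of $A_2$ and of $A_4$ is a $T$-fixed witness against the identity in (b); and in $A_4$ choosing $x$ a coatom and $y$ the bottom makes $(x\to T(x))\join y\join T(y)$ the other coatom rather than $\one$, killing (c). The one point you flag as needing care — that an identity passes to homomorphic images, so the subdirectly irreducible factors of a model of $\varepsilon$ again model $\varepsilon$ — is indeed the only non-computational step, and you handle it correctly. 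This is presumably the argument the authors had in mind when they recorded the classification of subdirect irreducibles immediately before stating the theorem.
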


It is easy to see that $\mathcal A_{2}$ is just the variety of 
implication algebras. 

If $\mathcal L$ is a cubic implication algebra then $x\mapsto\Delta(\one,  x)$ 
induces a symmetric implication structure on $\mathcal L$. Because of 
axiom (a) these are in $\mathcal A_{3}$.

We are interested in the variety $\mathcal 
A_{2}\join\mathcal A_{3}$.

We noted above that in cubic implication algebras and MR-algebras the operation
$\brk<x, y>\mapsto x\meet\Delta(\one,  xy)$
is quite important. In general this is not total in symmetric 
implication algebras, but we note that it is defined for $x\geq y$
in each of $\mathcal A_{2}$, $\mathcal A_{3}$ and $\mathcal A_{4}$. 

\begin{defn}
    A \emph{locally symmetric implication algebra} is a symmetric 
    implication algebra in which the meet
    $x\meet T(x\to y)$ exists whenever $x\geq y$.
\end{defn}

Not every symmetric implication algebra is locally symmetric,  but 
all cubic implication algebras are. 

\subsection{Multicubic implication algebras}
We know that every cubic implication algebra can be embedded into the interval 
algebra of a Boolean algebra with upwards-closed image. We take this 
as our starting point for the following definition.

\begin{defn}
    Let $B$ be a Boolean algebra. A \emph{multicubic implication algebra} is any 
    implication algebra isomorphic to an upwards-closed subalgebra of 
    $B\times\rsf I(B)$ -- where we are assuming a partial operation 
    $\Delta$ as usual on $\rsf I(B)$ and extending it as the identity 
    on $B$. 
\end{defn}

Notice that we could have instead defined multicubic implication algebras using
subalgebras of $B_{1}\times\rsf I(B_{2})$ where both $B_{1}$ and 
$B_{2}$ are Boolean algebras. This is immediate as 
$B_{1}\times\rsf I(B_{2})$ is an upwards closed subalgebra of 
$B_{1}\times B_{2}\times\rsf I(B_{1}\times B_{2})$. 

The examples of the next section provide some reason for our 
nomenclature.

\section{Multicubes}
In this section we provide an example of a class of multicubes and a 
construction of a multicubic implication algebra. This example is highly 
geometric, extending one way of looking at $n$-cubes. 
The class of multicubes described here essentially contains all 
finite examples, although the proof of this will not be given in this 
paper.

Let $\Omega=\{ 1,2,\dots,n\}$.  For each $i\in\Omega$ let
$M_i=\{-n_i,\dots,0,\dots,n_i\}$ be a finite $\Z$-module of
odd size.  Let ${\mathcal V}=\prod_{i=1}^nM_i$.  Elements of
$\mathcal V$ will be denoted by $\vec v$ with $i^{\text{th}}$
component $(\vec v)_i$.  We will distinguish certain
formal vectors $e_i$ such that $(e_i)_j=\delta_{ij}$. 
Note also that $M_i$ being of
odd size is equivalent to the property 
$$
\qquad\forall m \ 2m=0\ \Rightarrow m=0
$$ 
This property will be used
implicitly, without mention in many of the results that
follow. 

For $A \subseteq \Omega$ define $X_A={\leftGen}\Set{
e_\alpha | \alpha \in A }{\rightGen}$ where
${\leftGen}S{\rightGen}$ means the submodule generated by
S. 

Clearly
\begin{align*}
\qquad X_A \cap X_B\ &=\ X_{A \cap
B}\\
\text{and }
\qquad X_A + X_B\ &=\ X_{A \cup B}.
\end{align*} 

\subsection{Order and Operations}
Define a partial order {\textbf{P}}=$\langle P,\le \rangle$
by \\ 
$\qquad P = \Set{ \vec a + X_A  |  \vec a \in
{\mathcal V} \text{, and }A \subseteq \Omega}
$ and
$\qquad \le \text{ is } \subseteq$. \\
This is a suborder of the poset of ``affine subspaces'' of
{$\mathcal V$}, and will be called a {\emph{multicube}}.  

\begin{lem}\label{lem:TEST} 
\begin{enumerate}[(a)]
\item
$\vec a +X_A \le \vec b +X_B $ iff $A \subseteq B $ and
$\vec b-\vec a \in X_B$.
\item
$\vec a +X_A < \vec b +X_B $ iff $\vec a +X_A \le \vec b +X_B $
and $A \neq B $.
\item
$\vec a +X_A \le \vec b +X_B $ implies $\vec a +X_B =
\vec b +X_B $.
\end{enumerate}
\end{lem}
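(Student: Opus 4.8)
The plan is to reduce all three statements to elementary coset arithmetic in the module $\mathcal V$. The one fact I would isolate first, and use repeatedly, is a description of the submodules $X_B$: since each coordinate projection $\mathcal V\to M_i$ is a module homomorphism and the generators $e_\beta$ ($\beta\in B$) have $i^{\text{th}}$ component $0$ for $i\notin B$, every $\vec v\in X_B$ has $(\vec v)_i=0$ whenever $i\notin B$; conversely each $e_\beta$ ($\beta\in B$) lies in $X_B$. In particular, because $(e_\alpha)_\alpha=1\neq 0$ in $M_\alpha$ (here is where $M_\alpha$ being nontrivial, i.e.\ $n_\alpha\ge 1$, is used), we get $e_\alpha\in X_B$ iff $\alpha\in B$. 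Everything else is bookkeeping.

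For (a), the implication from right to left is a translation argument: if $A\subseteq B$ then $X_A\subseteq X_B$, and if also $\vec b-\vec a\in X_B$ then for any $s\in X_A$ we have $(\vec a+s)-\vec b=s-(\vec b-\vec a)\in X_B$, so $\vec a+X_A\subseteq \vec b+X_B$. For the converse, assume $\vec a+X_A\subseteq\vec b+X_B$. Evaluating at $\vec a=\vec a+0$ gives $\vec a-\vec b\in X_B$, hence $\vec b-\vec a\in X_B$. Then for each $\alpha\in A$ the element $\vec a+e_\alpha$ also lies in $\vec b+X_B$, so $e_\alpha=(\vec a+e_\alpha-\vec b)+(\vec b-\vec a)\in X_B$, and by the preliminary fact $\alpha\in B$; thus $A\subseteq B$.

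Part (c) is then immediate: if $\vec a+X_A\le\vec b+X_B$, (a) gives $\vec b-\vec a\in X_B$, so $\vec a\in\vec b+X_B$ and hence $\vec a+X_B=\vec b+X_B$. For (b), note $<$ means $\le$ together with $\neq$, so it suffices to show that when $\vec a+X_A\le\vec b+X_B$ we have $\vec a+X_A=\vec b+X_B$ iff $A=B$. If the cosets are equal, applying (a) to the two opposite inclusions yields $A\subseteq B$ and $B\subseteq A$. Conversely, if $A=B$ then $\vec a+X_A\subseteq\vec b+X_A$ forces (evaluating at $\vec a$) $\vec a-\vec b\in X_A$, so the two cosets of $X_A$ coincide. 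I do not expect any real obstacle here; the only delicate point is the support description of $X_B$, which is exactly what keeps distinct index sets from collapsing, and it relies essentially on the standing hypothesis that each $M_i$ is nontrivial (if some $M_i$ were the zero module the lemma would genuinely fail), so I would flag that hypothesis explicitly at the start of the proof.
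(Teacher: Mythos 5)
Your proof is correct; the paper disposes of this lemma with the single word ``Clear,'' and what you have written is exactly the routine coset verification that word is standing in for, organized around the support description of $X_B$ (the $i$th coordinate of any $\vec v\in X_B$ vanishes for $i\notin B$, and $e_\alpha\in X_B$ iff $\alpha\in B$). Your remark that the argument genuinely needs each $M_i$ to be nontrivial is well taken: the paper writes $M_i=\{-n_i,\dots,0,\dots,n_i\}$ without explicitly excluding $n_i=0$, yet if some $M_i$ were the zero module then $X_A=X_{A\cup\{i\}}$ and parts (a) and (b) would fail as stated, so flagging that standing assumption is a worthwhile addition rather than a deviation.
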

\begin{proof} Clear.
\end{proof}

We make \textbf{P} into an upper semilattice by defining some meets 
and all joins as follows:

\begin{defn} 
\begin{align*}
\qquad (\vec a + X_A)\ \wedge\ (\vec b + X_B) &= 
\begin{cases}\vec
c + X_{A \cap B} &\text{ if } c \in (\vec a + X_A)\cap(\vec b +
X_B) \text{ exists}\\
 \text{undefined} &\text{ otherwise}
 \end{cases}
\end{align*}
\end{defn}

Note that $(\vec a + X_A)\wedge(\vec a + X_B)=(\vec a +
X_{A \cap B})$ . 

Before we can define join, we must define an operation that captures 
the difference between two elements of our partial order. 
\begin{defn} ( Of $[-,-]$)\\
If $\vec a,\vec b \in$ {$\mathcal V$} we define \qquad
$$
[\vec a,\vec b] = \Set{i \in \Omega | a_i \neq b_i}
\qquad\text{ where }\vec a = \sum_{i=1}^n a_i e_i. 
$$
\end{defn}
Note that this is not the same as $\leftGen\dots\rightGen$. 

Here are some of the basic properties of $[-,-]$.
\begin{prop} Let $\vec a,\vec b \in \mathcal
V$.Then
\begin{enumerate}[(1).]
\item $[\vec a,\vec b]=[\vec b,\vec a]$;
\item $[\vec a,\vec b]=[\vec a - \vec b,\vec 0]$
and hence $[\vec a + \vec x,\vec b + \vec x]= [\vec a,\vec
b]$;
\item $[\vec a - \vec b,\vec 0]\subseteq [\vec a,\vec 0]
\cup [\vec b,\vec 0]$ with equality iff $[\vec
a,\vec 0] \cap [\vec b,\vec 0]=\emptyset$;
\item $[\vec a,\vec 0]$ is the support of $\vec
a$;
\item $[\vec a,\vec a]=\emptyset$;
\item $[\vec a,\vec 0]\subseteq A\ \Leftrightarrow
\vec a \in X_A$.
\end{enumerate}
\end{prop}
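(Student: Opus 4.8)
The plan is to reduce each of the six claims to an elementary coordinatewise statement in the groups $M_i$ and read it off directly from the definition $[\vec a,\vec b]=\{\,i\in\Omega : a_i\neq b_i\,\}$, in the same spirit as the ``Clear.'' proofs above. Items (1), (4) and (5) are immediate: (1) holds because $a_i\neq b_i$ iff $b_i\neq a_i$; (5) because $a_i\neq a_i$ is never true; and (4) because ``$i\in[\vec a,\vec 0]$'' says exactly that $a_i\neq 0$, which is the defining condition for $i$ to lie in the support of $\vec a=\sum_i a_ie_i$. For (2) I would use that each $M_i$ is an abelian group, so $a_i\neq b_i$ iff $a_i-b_i\neq 0$ iff $(\vec a-\vec b)_i\neq(\vec 0)_i$; this gives $[\vec a,\vec b]=[\vec a-\vec b,\vec 0]$ at once, and then $[\vec a+\vec x,\vec b+\vec x]=[(\vec a+\vec x)-(\vec b+\vec x),\vec 0]=[\vec a-\vec b,\vec 0]=[\vec a,\vec b]$.

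For (6) the point is that the elements of $X_A$ are precisely the $\vec v\in\mathcal V$ whose support is contained in $A$: every element of the submodule generated by $\{e_\alpha:\alpha\in A\}$ is a $\Z$-combination $\sum_{\alpha\in A}c_\alpha e_\alpha$ and so has support inside $A$, while conversely if $[\vec a,\vec 0]\subseteq A$ then $\vec a=\sum_{i\in[\vec a,\vec 0]}a_ie_i$ is visibly such a combination. Combining this description with (4) then yields (6).

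The only item carrying real content is (3). The inclusion $[\vec a-\vec b,\vec 0]\subseteq[\vec a,\vec 0]\cup[\vec b,\vec 0]$ is the contrapositive of the trivial remark that $a_i=0=b_i$ forces $(\vec a-\vec b)_i=0$. For the equality criterion I would argue coordinatewise. If $[\vec a,\vec 0]\cap[\vec b,\vec 0]=\emptyset$, then for every $i$ at most one of $a_i,b_i$ is nonzero, so whenever $a_i\neq 0$ or $b_i\neq 0$ the coordinate $(\vec a-\vec b)_i$ equals $a_i$ or $-b_i$ and is in particular nonzero; hence $[\vec a,\vec 0]\cup[\vec b,\vec 0]\subseteq[\vec a-\vec b,\vec 0]$ and equality holds. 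For the converse I would run the contrapositive, producing from a coordinate lying in both supports a coordinate that belongs to $[\vec a,\vec 0]\cup[\vec b,\vec 0]$ but not to $[\vec a-\vec b,\vec 0]$, so that the containment is strict. I expect pinning down this converse direction of (3) cleanly to be the only mildly fiddly step, the rest of the proposition being a routine unwinding of the definitions.
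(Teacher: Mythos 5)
Your reductions for (1), (2), (4), (5), (6), the inclusion in (3), and the ``if'' direction of (3) are all correct and are exactly the kind of coordinatewise unwinding that the paper's one-word proof (``Immediate.'') is gesturing at. However, the ``converse direction'' of (3) that you flag as merely ``mildly fiddly'' is in fact not provable, because as literally stated it is false, and your sketch of the contrapositive exposes why. From a coordinate $i$ lying in both supports you would need to manufacture a coordinate in $[\vec a,\vec 0]\cup[\vec b,\vec 0]$ but outside $[\vec a-\vec b,\vec 0]$; but $a_i\neq 0\neq b_i$ does not force $a_i=b_i$, and if $a_i\neq b_i$ then $i$ still lies in $[\vec a-\vec b,\vec 0]$, so no such witness appears. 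Concretely, take $n=1$, $M_1=\Z/3\Z=\{-1,0,1\}$, $a_1=1$, $b_1=-1$; then $[\vec a,\vec 0]=[\vec b,\vec 0]=[\vec a-\vec b,\vec 0]=\{1\}$, so equality holds even though the supports intersect. The odd-order hypothesis does not save this: it guarantees $a_i=-b_i\neq0$ implies $2a_i\neq0$, which only makes the counterexample \emph{more} robust.

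So the ``iff'' in item (3) should be read as ``if'': disjoint supports give equality, but equality does not force disjoint supports. (What \emph{is} true is that equality holds iff there is no $i$ with $a_i=b_i\neq0$, equivalently iff $[\vec a,\vec 0]\cup[\vec b,\vec 0]\subseteq[\vec a,\vec b]$.) This costs nothing downstream: the only use the paper makes of item (3) is the inclusion direction, e.g.\ in the uniqueness argument for critical elements. Everything else in your proposal is fine and matches the paper's intent; you should simply weaken (3) rather than try to close the converse.
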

\begin{proof}
Immediate. 
\end{proof}

\begin{defn} ( Of join )
\begin{align*}
(\vec a + X_A)\ \vee\ (\vec b + X_B) &= 
(\vec a + X_{A \cup B \cup [\vec a,\vec b]})
\end{align*}
\end{defn}

The reader may easily verify that \textbf{P} with these
operations is an upper semilattice in which every bounded below pair 
has a greatest lower bound. It also satisfies a
weakened form of distributivity.

\begin{lem}[weak distributivity]
For any $a, b, c$ such that $b\meet c$ exists\ ${a}\join{( b\meet c )} = {(
a\join b )}\meet{( a\join c )}$.
\end{lem}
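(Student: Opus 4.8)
The plan is to reduce the identity to a routine computation on the index sets, once all three elements are written with convenient base points. Write $a=\vec a+X_A$, $b=\vec b+X_B$, $c=\vec c+X_C$, and suppose $b\meet c$ exists; then there is a point $\vec d\in(\vec b+X_B)\cap(\vec c+X_C)$, so by definition $b\meet c=\vec d+X_{B\cap C}$, and since $\vec d-\vec b\in X_B$ and $\vec d-\vec c\in X_C$ we may also rewrite $b=\vec d+X_B$ and $c=\vec d+X_C$.

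First I would compute the left-hand side: $a\join(b\meet c)=(\vec a+X_A)\join(\vec d+X_{B\cap C})=\vec a+X_{A\cup(B\cap C)\cup[\vec a,\vec d]}$. Next I would compute the two joins on the right from the rewritten forms of $b$ and $c$, obtaining $a\join b=\vec a+X_{A\cup B\cup[\vec a,\vec d]}$ and $a\join c=\vec a+X_{A\cup C\cup[\vec a,\vec d]}$; here I use properties (2) and (3) of $[-,-]$, in the form $[\vec a,\vec b]\subseteq[\vec a,\vec d]\cup[\vec d,\vec b]$ together with $[\vec d,\vec b]\subseteq B$ (and symmetrically for $c$), to see that passing from $\vec b$ to $\vec d$ leaves the index set unchanged. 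The key point is that both $a\join b$ and $a\join c$ now have base point $\vec a$, so they share the point $\vec a$; hence their meet is defined with no case analysis, and by the remark following the definition of $\meet$ it equals $\vec a+X_{(A\cup B\cup[\vec a,\vec d])\cap(A\cup C\cup[\vec a,\vec d])}$.

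It then remains to apply the set identity $(U\cup B)\cap(U\cup C)=U\cup(B\cap C)$ with $U=A\cup[\vec a,\vec d]$, which collapses the index set on the right to $A\cup[\vec a,\vec d]\cup(B\cap C)$, exactly the index set found for the left-hand side; comparing base points and index sets gives the equality. I expect the only genuinely delicate part to be the bookkeeping that legitimizes the normalization to base points $\vec a$ and $\vec d$ — in particular confirming that the right-hand meet always exists — since after that the statement is just the distributive law of union over intersection, and the $\Z$-module structure enters only through the elementary properties of $[-,-]$ already recorded.
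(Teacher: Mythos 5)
The paper itself offers no argument here (the proof is marked ``Deferred''), so there is nothing to compare against; your proposal can only be judged on its own. It is correct and complete: normalizing $b$ and $c$ to the common representative $\vec d\in b\cap c$ makes all three terms compatible at once, so that the left side is $\vec a+X_{A\cup(B\cap C)\cup[\vec a,\vec d]}$ by the definition of join, both joins on the right contain $\vec a$ (so their meet exists with no further hypothesis), and the right side becomes $\vec a+X_{(A\cup B\cup[\vec a,\vec d])\cap(A\cup C\cup[\vec a,\vec d])}$, which collapses to the same index set by ordinary distributivity of sets. Your appeal to properties (2) and (3) of $[-,-]$ to justify the change of representative is exactly the well-definedness check one needs, and the observation that the module structure enters only through these elementary properties is accurate. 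This is presumably the proof the authors had in mind when they wrote ``Deferred.''
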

\begin{proof}
Deferred.
\end{proof}

\subsection{Reflection}

We will now define the partial operation $\Delta :{ P}\times
{ P}\rightarrow { P}$ that corresponds to $\Delta$
in cubic lattices.  This is a particular reflection operator. 
First some
auxiliary notions that help describe the reflection we want to 
abstract. 

\begin{defn}
	A vector $\vec x$ is $B$\emph{-critical} for some set 
	$B\subseteq\Omega$ iff
	$x_{i}\not=0$ implies $i\notin B$.
\end{defn}

\begin{defn}  A vector $\vec x \ \in \vec a +
X_A$ is \emph{critical} if $x_i \neq 0 \Rightarrow e_i \not
\in X_A$ iff $\vec x$ is $A$-critical.
\end{defn} 

\begin{lem} Every ${ a}\in{ P}$ has a unique
critical element, which we will denote by $\Gamma({
a})$. 
\end{lem}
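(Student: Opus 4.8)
The plan is to write down $\Gamma(a)$ explicitly and then obtain uniqueness from a one-line support computation. Given $a=\vec a+X_A\in P$, I would take $\vec x$ to be $\vec a$ with its $A$-coordinates deleted, i.e. $x_i=a_i$ for $i\notin A$ and $x_i=0$ for $i\in A$. Then the support of $\vec a-\vec x$ lies in $A$, so property~(6) among the basic properties of $[-,-]$ gives $\vec a-\vec x\in X_A$ and hence $\vec x\in\vec a+X_A$; and the support of $\vec x$ misses $A$ by construction, which is exactly $A$-criticality, i.e. criticality in the sense of the definition (using the equivalence $e_i\notin X_A\iff i\notin A$, itself a case of property~(6)). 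That settles existence.

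For uniqueness I would argue as follows. If $\vec x,\vec y$ are both critical members of $\vec a+X_A$, then $\vec x-\vec y\in X_A$, so by property~(6) its support sits inside $A$; on the other hand property~(3) bounds that support by $[\vec x,\vec 0]\cup[\vec y,\vec 0]$, which is disjoint from $A$ since $\vec x$ and $\vec y$ are $A$-critical. A subset of $\Omega$ that is both contained in $A$ and disjoint from $A$ is empty, so $\vec x-\vec y$ has empty support and therefore $\vec x=\vec y$ by property~(4). Setting $\Gamma(a)=\vec x$ then makes sense.

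I do not expect any genuine obstacle here: the whole argument is bookkeeping with the operation $[-,-]$ and its listed properties. The one point that deserves a word of care is that the construction above used a chosen representative $\vec a$ of the coset $a$, so one should note that the result is independent of that choice — but this is immediate from the uniqueness half, since any representative of $a$ yields \emph{some} critical element and there is exactly one. It is also worth remarking that, unlike many of the later results, this lemma makes no use of the hypothesis that the modules $M_i$ have odd size.
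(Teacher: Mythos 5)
Your proof is correct and essentially the same as the paper's: both establish uniqueness by noting that $[\vec x,\vec y]$ is simultaneously contained in $A$ and in $\comp A$, and both establish existence by zeroing out the $A$-coordinates of any representative. The only differences are the order of the two halves and your (correct) side remarks about representative-independence and the unused oddness hypothesis.
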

\begin{proof} 
%
	First uniqueness. Suppose that $\vec x_1, \vec x_2$ are two critical 
vectors in $a=\vec a+X_{A}$. Then we have
$[\vec x_1, \vec x_2]\subseteq A$ but also
$[\vec x_1, \vec x_2]\subseteq [\vec x_1, \vec 0]\cup[\vec x_2, \vec 0]
\subseteq\comp A$. Hence $[\vec x_1, \vec x_2]=\emptyset$, i.e. $\vec 
x_1=\vec x_2$. 

For existence, pick any $\vec a\in a$. Define a vector $\vec b$ by
\begin{gather*}
b_i=\begin{cases}
a_i&\text{ if }i\notin A\\
0&\text{ if }i\in A. 
\end{cases}\end{gather*}
Then $[\vec a, \vec b]\subseteq A$ and so $\vec b\in a$,  and
clearly $[\vec b, \vec 0]\subseteq\comp A$. 
\end{proof}

\begin{lem}
	$\mathbf a<\mathbf b$ then $\Gamma(\mathbf b)_i\not=0$ 
implies $\Gamma(\mathbf a)_i=\Gamma(\mathbf b)_i$.
\end{lem}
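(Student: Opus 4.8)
The plan is to unwind the definitions of $\Gamma$ and of $[-,-]$, keeping careful track of which coset of $\mathcal V$ each critical vector lives in. Write $\mathbf a=\vec a+X_A$ and $\mathbf b=\vec b+X_B$, so that by \lemref{lem:TEST} we have $A\subseteq B$, and put $\vec x=\Gamma(\mathbf a)$ and $\vec y=\Gamma(\mathbf b)$. Everything will come down to locating $[\vec x,\vec y]$ and comparing it with $B$.

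The first step is to show that $\vec x$ and $\vec y$ lie in the same coset of $X_B$. Since $\le$ on $P$ is literally inclusion of subsets of $\mathcal V$, the hypothesis $\mathbf a<\mathbf b$ gives $\vec x\in\mathbf a\subseteq\mathbf b=\vec b+X_B$; as $\vec y\in\vec b+X_B$ as well, we get $\vec x-\vec y\in X_B$. By properties (2) and (6) of $[-,-]$ this yields $[\vec x,\vec y]=[\vec x-\vec y,\vec 0]\subseteq B$.

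The second step uses the criticality of $\vec y$. Being $\Gamma(\mathbf b)$, the vector $\vec y$ is $B$-critical, so its support is disjoint from $B$; hence if $\Gamma(\mathbf b)_i=y_i\neq 0$ then $i\notin B$. Combining this with $[\vec x,\vec y]\subseteq B$ forces $i\notin[\vec x,\vec y]$, i.e. $x_i=y_i$, which is exactly $\Gamma(\mathbf a)_i=\Gamma(\mathbf b)_i$, as required.

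I do not expect a genuine obstacle here. The one place that calls for a little care is the passage between the abstract order relation $\le$ on $P$ and honest set inclusion of cosets in $\mathcal V$, which is what licenses treating $\Gamma(\mathbf a)$ as a member of the coset $\mathbf b$; once that is in hand the statement is a direct application of the listed properties of $[-,-]$ and of the definition of $\Gamma$.
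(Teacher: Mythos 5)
Your proof is correct and follows the same route as the paper's: the paper's chain of implications rests on exactly the two facts you isolate, namely that $[\Gamma(\mathbf a),\Gamma(\mathbf b)]\subseteq\sigma(\mathbf b)$ because both critical vectors lie in the coset $\mathbf b$, and that the support of $\Gamma(\mathbf b)$ is disjoint from $\sigma(\mathbf b)$ by criticality. You simply make explicit the coset argument that the paper leaves implicit in the step from $i\notin\sigma(\mathbf b)$ to $i\notin[\Gamma(\mathbf a),\Gamma(\mathbf b)]$.
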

\begin{proof}
\begin{align*}
\Gamma(\mathbf b)_i\not=0\, &\Rightarrow\, 
i\in\comp{\sigma(\mathbf b)}\\
&\Rightarrow\,i\notin\sigma(\mathbf b)\\
&\Rightarrow\, i\in\comp{[\Gamma(\mathbf a), \Gamma(\mathbf b)]}\\
&\Rightarrow\, \Gamma(\mathbf a)_i=\Gamma(\mathbf b)_i
\end{align*}   
\end{proof}

Coupled with the last definition we have
\begin{defn}
Let $\mathbf a\in\Cal P$. Then we define $\sigma(\mathbf a)$ to 
be the subset of $\Omega$ such that $\mathbf a=\Gamma(\mathbf 
a)+X_{\sigma(\mathbf a)}$
\end{defn}

$\Gamma({ a})$ should be thought of as the center of
the affine subspace ${ a}$.  
Using this representation we can
define $\Delta$.  Note that $\vec x \in  a$ is
critical iff $[\vec x,\vec 0]\subseteq \os a $.

\begin{defn}
If ${ b}\le{ a}$ then 
$$
\Delta({ a},{ b}) = (2\Gamma({a})-\Gamma({ b}))+X_{\sigma({ b})}.
$$ 
\end{defn}

This operation should be thought of as taking the negative
of ${ b}$  relative to ${
a}$.

\begin{prop} \label{prop:basicGD}
Let ${ a},{ b}$ and ${ c}$ be elements of ${
P}$.Then
\begin{enumerate}[(a)]
\item $\Delta({ a},{ a})={ a}$;
\item if $b\le a$ then $\Delta({ a},{ b})\le{ a}$;
\item if $b\le a$ then $\Gamma(\Delta({ a},{ b}))=2\Gamma({
a})-\Gamma({ b})$;
\item if $b\le a$ then $\Delta({ a},\Delta({ a},{ b}))={
b}$;
\item if ${ c}\le{ b}\le{ a }$\ then\ 
$\Delta({ a},{ c})\le\Delta({ a},{
b})$;
\item $\Delta({ a},{ b})={ b}\
\Leftrightarrow \Gamma({ a})=\Gamma({
b})$;
\item if\ ${ b}<{ a}$\ then\ $\Delta({ a},{
b})={ b}\ or\ \Delta({ a},{ b})\ \wedge {
b}=\emptyset$;
\item if\ ${ c}\le{ b}\le{ a }$\ then\
$\Delta({ a},\Delta({ b},{ c}))=\Delta(\Delta({
a},{ b}),\Delta({ a},{ c}))$;
\item if\ ${ a}\ge{ c}$ and ${ b}\ge{ c}$
then
\begin{enumerate}[(a)]
\item $\Gamma(a) + \Gamma(b) = \Gamma({ a } \wedge {
b}) + \Gamma({ a } \vee { b})$;
\item $[\Gamma(a),\Gamma(b)]=[\Gamma({ a } \wedge { b}),
\Gamma({ a } \vee { b})]$;
\item $\Delta({ a},{ c})\ \vee\ \Delta({ b},{
c})= \Delta({ a}\ \vee { b},{ c})\ \vee\
\Delta({ a}\ \wedge { b},{ c})$.
\end{enumerate}
\end{enumerate}
\end{prop}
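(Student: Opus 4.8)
The plan is to reduce every clause to an explicit computation in the representation $a=\Gamma(a)+X_{\sigma(a)}$, using three facts that drop straight out of Lemma~\ref{lem:TEST} and the two lemmas on critical elements: (i) if $b\le a$ then $\sigma(b)\subseteq\sigma(a)$ and $\Gamma(a)-\Gamma(b)\in X_{\sigma(a)}$; (ii) the support of $\Gamma(a)$ lies in $\comp{\sigma(a)}$; and (iii) two cosets $\vec v+X_S$ and $\vec w+X_S$ of the \emph{same} submodule coincide when $\vec v-\vec w\in X_S$ and are disjoint otherwise, so in the latter case their meet is undefined. The odd-size hypothesis will be used in the equivalent form ``$\vec x$ and $2\vec x$ have the same support, and $2\vec x=\vec 0$ forces $\vec x=\vec 0$''.

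First I would dispatch (a)--(e). Part (a) is the definition; for (b) and (e) one just checks the two conditions of Lemma~\ref{lem:TEST}(a) using fact (i). The workhorse is (c): the support of $2\Gamma(a)-\Gamma(b)$ is contained in $(\operatorname{supp}\Gamma(a))\cup(\operatorname{supp}\Gamma(b))\subseteq\comp{\sigma(a)}\cup\comp{\sigma(b)}\subseteq\comp{\sigma(b)}$ by facts (i) and (ii), so $2\Gamma(a)-\Gamma(b)$ is critical for $\Delta(a,b)$, whence $\Gamma(\Delta(a,b))=2\Gamma(a)-\Gamma(b)$ and $\sigma(\Delta(a,b))=\sigma(b)$. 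Then (d) and (h) are pure arithmetic: expanding using (c) (and (b),(e) to legitimise each $\Delta$), (d) gives $\Gamma(b)+X_{\sigma(b)}=b$, and both sides of (h) collapse to $(2\Gamma(a)-2\Gamma(b)+\Gamma(c))+X_{\sigma(c)}$.

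For (f) and (g) I would observe that $\Delta(a,b)$ and $b$ are cosets of the same submodule $X_{\sigma(b)}$ whose representatives differ by $2(\Gamma(a)-\Gamma(b))$. By fact (iii) the meet $\Delta(a,b)\wedge b$ exists iff $2(\Gamma(a)-\Gamma(b))\in X_{\sigma(b)}$; but the support of $\Gamma(a)-\Gamma(b)$ avoids $\sigma(b)$ (facts (i),(ii)), so membership in $X_{\sigma(b)}$ forces $2(\Gamma(a)-\Gamma(b))=\vec 0$, hence $\Gamma(a)=\Gamma(b)$ by odd size, hence $\Delta(a,b)=b$. This is (f), and (g) is exactly the dichotomy ``equal, or disjoint'' repackaged.

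The hard part is (i), and the key is to pin down $\Gamma$ of the meet and join. Since $a,b\ge c$ we get $\Gamma(a)-\Gamma(b)\in X_{\sigma(a)}+X_{\sigma(b)}=X_{\sigma(a)\cup\sigma(b)}$, so $[\Gamma(a),\Gamma(b)]\subseteq\sigma(a)\cup\sigma(b)$; hence $a\vee b=\Gamma(a)+X_{\sigma(a)\cup\sigma(b)}$, and $a\wedge b$ exists because $c$ is a common lower bound. Running the construction from the critical-element existence lemma then gives $\Gamma(a\vee b)_i=\Gamma(a)_i$ for $i\notin\sigma(a)\cup\sigma(b)$ and $0$ otherwise, while $\Gamma(a\wedge b)_i$ equals $\Gamma(a)_i$ on $\comp{\sigma(a)}$, equals $\Gamma(b)_i$ on $\sigma(a)\setminus\sigma(b)$, and equals $0$ on $\sigma(a)\cap\sigma(b)$. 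With these formulas, (i)(a) is a four-case check on the partition of $\Omega$ by membership in $\sigma(a),\sigma(b)$; (i)(b) follows by checking in those same cases that $\Gamma(a)-\Gamma(b)$ and $\Gamma(a\wedge b)-\Gamma(a\vee b)$ have equal supports; and for (i)(c) one computes both joins, each of which becomes a coset of $X_{\sigma(c)\cup[\Gamma(a),\Gamma(b)]}$ after using ``support of $2\vec x$ equals support of $\vec x$'' together with (i)(b), and then checks that the two base vectors differ by $2(\Gamma(a)-\Gamma(a\vee b))$, whose support sits inside $[\Gamma(a),\Gamma(b)]$. I expect the only real friction to be the bookkeeping over these index-set cases; once the $\Gamma$-formulas for $\wedge$ and $\vee$ are in place, everything else is mechanical.
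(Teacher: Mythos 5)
Your proposal is correct and takes essentially the same route as the paper: reduce every clause to a direct computation in the representation $a=\Gamma(a)+X_{\sigma(a)}$, identifying $\Gamma(\Delta(a,b))$ as $2\Gamma(a)-\Gamma(b)$ and $\sigma(\Delta(a,b))$ as $\sigma(b)$, and for part (i) pinning down $\Gamma(a\wedge b)$ and $\Gamma(a\vee b)$ coordinate-by-coordinate via a common lower bound. The only cosmetic differences are that you prove (c) by a one-line support-union inclusion rather than the paper's three-case analysis, you handle (f) and (g) together by the ``same-coset or disjoint'' dichotomy, and in (i)(c) you choose $2\Gamma(a\vee b)-\Gamma(c)$ as the representative where the paper uses $2\Gamma(a\wedge b)-\Gamma(c)$.
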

\begin{proof}
\begin{enumerate}[(1)]
\item $2\vec a-\vec a = \vec a $.
\item ${ b}\le{ a} \Rightarrow \G a -\G b \in X_{\s
a}$.Thus $\G a - (2\G a -\G b) = \G b - \G a \in X_{\s
a}$.
\item Suppose $(2\G a - \G b)_i \neq
0$. Then either
\begin{enumerate}[(i)]
\item
$(\G a)_i = 0$ and $(\G b)_i \neq 0$
or 
\item
$(\G a)_i \neq 0$ and $(\G b)_i =
0$ or  
\item
$(\G a)_i \neq 0$ and $(\G b)_i \neq 0$.
\end{enumerate}
Since $\s b \subseteq \s a$,$\G a - \G b \in X_{\s a}$ and
all of these vectors are critical, case (ii) is
impossible.
Also in case (iii) $(\G a)_i=(\G b)_i$.
Thus we have either case (i) which entails $i \notin \s
b$
 or case (iii) which entails $i \notin \s a$ and hence
$i \notin \s b$.Thus $2\G a - \G b$ is critical.
\item $2\G a -(2\G a - \G b) = \G b$.
\item $(2\G a - \G b) - (2\G a - \G c) = \G c - \G b$ is in 
$X_{\s b}$.

\item $2\G a - \G b = \G b \Leftrightarrow \G
a = \G b$.
\item ${ b}<{ a}$ implies $\s b \subset \s a$ and $\s b
\neq \s a$.There are two cases for $\G b$.
\begin{enumerate}[(i)]
\item
$\G b = \G a$: then
$\Delta({ a},{ b})={ b}$ by (5).
\item
$\G b \neq \G a$: then suppose $\vec x$ is in
both ${ b}$ and $\Delta({ a},{ b})$.Then $\vec x -
\G b \in X_{\s b}$ and $\vec x - (2\G a -\G b) \in X_{\s
b}$.This means that $(\G a - \G b) \in X_{\s b}$.Since
this is a non--zero vector we now have that for some
$i$,either $(\G a)_i \neq 0\ and\ e_i \in X_{\s a}$  or 
$(\G b)_i \neq 0\ and\ e_i \in X_{\s b}$. Both of these
statements contradict the fact that these two vectors are
critical.Hence no such $\vec x$ can exist and so
$\Delta({ a},{ b})\ \wedge {
b}=\emptyset$.
\end{enumerate}
\item $2\G a - (2\G b - \G c)$=$2(2\G a - \G b) - (2\G a -
\G c)$.
\item
\begin{enumerate}[(i)]
\item
Indeed let $\vec c$ be in both ${ a}$ and ${ b}$. Wolog 
$\vec c=\G c$. 
Then $\G a = \sum_{i \notin \s a}c_ie_i$ and $\G b =
\sum_{i \notin \s b}c_ie_i$ and $\Gamma({ a } \wedge { b}) = \sum_{i \notin \s a
\cap \s b}c_ie_i$.

Furthermore $[\G a,\G b] \subseteq \s a \cup \s b$ since 
$\G a - \G b \in X_{\s a}+X_{\s b}$. This means that $\Gamma({ a } \vee {
b})=\sum_{i \notin \s a \cup \s b}c_ie_i$.
Putting all this together means that the left-hand side
of our equation is 
$$2\sum_{i  \in \overline{\s a} \cap \overline{\s
b}}c_ie_i\ +\ \sum_{i  \in (\overline{\s a}\cup\overline{\s
b})\setminus(\overline{\s a}\cap\overline{\s b})}c_ie_i$$
 which is the same as the right-hand side.
\item 
Given $\vec c$ as above
we see that 
\begin{align*}
 \G a &= \sum_{i \in \overline{\s a}}c_ie_i&&(b1)\\
 \G b &= \sum_{i \in \overline{\s b}}c_ie_i&&(b2)\\
 \Gamma({ a } \wedge { b})&= \sum_{i \in \overline{\s a}\cup
\overline{\s b}}c_ie_i&&(b3)\\
\Gamma({ a } \vee { b})&= \sum_{i \in \overline{\s a}\cap
\overline{\s b}}c_ie_i&&(b4)
\end{align*}
This means that 
$[\G a, \G b]= (\os a \setminus \os b)\cup (\os
b \setminus \os a)$ and \\
$[\Gamma({ a } \wedge { b}),\Gamma({ a } \vee { b})]=
((\os a \cup\os b )\setminus(\os
a \cap\os b ))$. It is easy to see
that these are the same.
\item
 Now to compute our $\Delta$ equation.
The left-hand side is $(2\G a - \G c)+X_{\s c} \ \vee\ (2\G
b - \G c)+X_{\s c}$ which equals $(2\G a - \G c)+X_{\s c
\cup [\G a,\G b]}$.

Similarly the right-hand side is $(2\Gamma({ a } \wedge
{ b}) - \G c)+X_{\s c \cup [\Gamma({ a } \wedge {
b}),\Gamma({ a } \vee { b})]}$. 
So it remains to show that \\ 
$\G a -\Gamma({ a } \wedge
{ b}) \in X_{\s c\cup [\G a,\G b]}$. We know that $[\G a ,\G b
]=\os a \setminus \os b  \cup\os b
 \setminus \os a$, and so by
equations (b1) and (b3) we have 
$$
\G a - \Gamma( a\meet  b )=-\sum_{i\in \s a  \cup \os b  }c_ie_i\ 
\in\ X_{[\G a ,\G b]}.
$$
\end{enumerate}
\end{enumerate}
\end{proof}

\def\C(#1,#2){({ #2}\rightarrow{ #1})}
\def\D(#1,#2){\Delta({ #1}, { #2})}
\subsection{Local Complementation}
In the case of the  cubic implication algebras, the delta function enables us to define 
a local complementation that makes the algebra into an implication 
algebra.  However, in the current situation the delta function may 
have fixed points and this implies the complementation function is not 
definable from $\Delta$. However, inspection of the example shows us 
that multicubic implication algebras are
locally complemented with the complement being defined by:
\begin{defn}
	Let $ a\le b$. Then 
	$$
	\C(a, b)=\Gamma(a)+X_{\s a\cup\os b}. 
	$$
\end{defn}

\begin{lem}\label{lem:compGD}
	Let $ a\le b$. Then 
	\begin{enumerate}[(a)]
		\item  $ a\le \C(a, b)$;
	
		\item  $ b\meet \C(a, b)= a$ and $ b\join \C(a, b)= 1$;
	
		\item  if $ b\le c$ then $\C(a, c)\le\C(a, b)$;
	
		\item  $\C(a, {\C(a, b)})= b$;
	
		\item  if $ a\le c$ then $\C(a, {b\meet c})=\C(a, b)\join\C(a, c)$ and 
		$\C(a, {b\join c})=\C(a, b)\meet\C(a, c)$;
	
		\item  $\bracket{[ a,  1], \join, \meet, \C(a, \bullet), 
		 a,  1}$ is a Boolean algebra;
	
		\item  $\bracket{P, \join, \C(\ ,\ )}$ is an implication algebra. 
	\end{enumerate}
\end{lem}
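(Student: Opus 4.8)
The plan is to verify the seven parts of \lemref{lem:compGD} in order, since the later parts build on the earlier ones; throughout I would work with the canonical representation $a = \Gamma(a) + X_{\s a}$ and the identities for $\Gamma$, $\sigma$, $\meet$ and $\join$ established in the previous subsections. For (a), since $a \le b$ gives $\s a \subseteq \s b$, hence $\s a \subseteq \s a \cup \os b$, and $\Gamma(a) - \Gamma(a) = \vec 0 \in X_{\s a \cup \os b}$; so $a \le \C(a,b)$ by \lemref{lem:TEST}(a). For (b), note $\s a \cap (\s a \cup \os b)= \s a$, and by the formula for meet of two elements sharing a common point (here $\Gamma(a)$ lies in both $b$ and $\C(a,b)$, using $\Gamma(a)-\Gamma(b)\in X_{\s b}$ to see $\Gamma(a)\in b$) we get $b \meet \C(a,b) = \Gamma(a) + X_{\s b \cap (\s a \cup \os b)} = \Gamma(a) + X_{\s a} = a$; similarly $b \join \C(a,b) = \Gamma(a) + X_{\s b \cup (\s a \cup \os b) \cup [\Gamma(a),\Gamma(b)]}$, and since $[\Gamma(a),\Gamma(b)] \subseteq \s b$ and $\s b \cup \os b = \Omega$, this is $1$.

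For (c), $b \le c$ means $\s b \subseteq \s c$, so $\os c \subseteq \os b$, giving $\s a \cup \os c \subseteq \s a \cup \os b$; moreover $\Gamma(a) \in \C(a,c)$ and $\Gamma(a) \in \C(a,b)$ trivially, so $\C(a,c) \le \C(a,b)$. Part (d) is a direct computation: $\C(a, \C(a,b))$ has center $\Gamma(a)$ (the center of $\C(a,b)$ is $\Gamma(a)$, computed as in part (c)'s reasoning) and index set $\s a \cup \overline{\s a \cup \os b} = \s a \cup (\os a \cap \s b) = \s a \cup \s b = \s b$, using $\s a \subseteq \s b$; hence it equals $b$. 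For (e), when $a \le c$ we have all of $b, c, b\meet c, b\join c$ sitting in the Boolean interval $[a,1]$ with common point $\Gamma(a)$; then $\s{b \meet c} = \s b \cup \s c$ and $\s{b\join c} = \s b \cap \s c$ (since the joins and meets are computed at the shared center $\Gamma(a)$, with no $[\Gamma(a),\Gamma(\cdot)]$ contribution because those bracket sets are empty — $b, c \supseteq a$ forces $\Gamma(b)=\Gamma(c)=\Gamma(a)$ by the definition of $\C$ and \propref{prop:basicGD}), so $\os{b\meet c} = \os b \cap \os c$ and the two De Morgan identities follow by taking complements inside $\Omega$.

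For (f), I would observe that by (a)--(e) the interval $[a,1]$ under $\join$, $\meet$ and the unary operation $\C(a,\bullet)$ is a bounded lattice with an involution (by (d)) satisfying De Morgan (by (e)) and complementation $b \meet \C(a,b) = a$, $b \join \C(a,b) = 1$ (by (b)); together with the weak distributivity lemma specialized to this interval (where all relevant meets exist), this makes $[a,1]$ a Boolean algebra. Finally, for (g), the standard fact that an implication algebra is exactly a structure of the form "join semilattice in which every principal filter $[a,1]$ is a Boolean algebra with complementation $b \mapsto (a \to b)$" gives the result, once we check $\C(a,b)$ agrees with the implication-algebra term $b \to a$ on each such interval and that the global operation $b \mapsto \C(b, b\join b')$ — rather, the operation $b' \to b = \C(b, b \join b')$, wait; more precisely one sets $x \to y := \C(y, x \join y)$, defined since $y \le x \join y$ — satisfies the implication algebra axioms; these reduce, via part (f), to identities verified on each Boolean interval. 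The main obstacle I anticipate is part (f) and the passage to (g): pinning down that every bounded-below pair in $[a,1]$ has a meet inside $[a,1]$ (needed to invoke weak distributivity and to conclude Booleanness), and then correctly identifying the global implication term with $\C$ so that the implication-algebra axioms are genuinely consequences of the interval-wise Boolean structure rather than requiring fresh computation; the earlier parts are essentially bookkeeping with the index sets $\s{\cdot}$.
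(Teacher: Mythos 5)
Your general strategy — unwind everything to the canonical affine-coset form $\Gamma(\cdot)+X_{\sigma(\cdot)}$ and track index sets — is exactly what the paper's ``Straightforward'' proof intends, and parts (a)--(d) are essentially right. However, part (e) as written contains a genuine error: you state $\sigma(b\meet c)=\sigma(b)\cup\sigma(c)$ and $\sigma(b\join c)=\sigma(b)\cap\sigma(c)$, which is backwards. Since $\le$ is set inclusion and $(\vec p+X_A)\cap(\vec p+X_B)=\vec p+X_{A\cap B}$, meet corresponds to intersection and join to union of index sets (this is also what \propref{prop:basicGD}(i)(b3),(b4) records). Consequently $\overline{\sigma(b\meet c)}=\overline{\sigma(b)}\cup\overline{\sigma(c)}$, not $\overline{\sigma(b)}\cap\overline{\sigma(c)}$, and it is precisely the correct version that makes $\C(a,{b\meet c})=\Gamma(a)+X_{\s a\cup\os b\cup\os c}=\C(a,b)\join\C(a,c)$ come out; your swapped version does not support the asserted De Morgan identity. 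In the same sentence, the justification ``$b,c\supseteq a$ forces $\Gamma(b)=\Gamma(c)=\Gamma(a)$'' is false (e.g.\ a singleton $a$ below a line $b$ will generally have $\Gamma(b)\ne\Gamma(a)$); what you actually need, and what does hold, is merely that $\Gamma(a)\in b$ and $\Gamma(a)\in c$, so that $\Gamma(a)$ can serve as a common representative and the bracket term $[\Gamma(a),\Gamma(a)]=\emptyset$ vanishes in the join formula. The same slight confusion between ``representative'' and ``critical vector'' appears in your parenthetical for (d), though there it is harmless since only the index set is computed. Your outline of (f) and (g) is sound: in $[a,\one]$ every pair is bounded below by $a$, so all meets exist and weak distributivity applies, and the characterization of implication algebras via Boolean principal filters then closes the argument.
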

\begin{proof}
	Straightforward. 
\end{proof}

The important relation seen in \cite{MR:cubes} between delta and complementation still exists 
in this case as we have
\begin{lem}
	Let $ a\le b$. Then
	\begin{enumerate}[(a)]
		\item  $\C({\relax\D(b,a)},b)=\Delta( 1, \C(a, 
		b))$;
	
		\item  $\Delta( b,  a)= b\meet \Delta( 1, {\C(a, 
		b)})$. 
	\end{enumerate}
\end{lem}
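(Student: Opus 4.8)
The plan is to verify both identities by direct computation with the $\Gamma$-and-$\sigma$ representation, since every operation involved ($\Delta$, the complement $\C(-,-)$, meet, and the global reflection $\Delta(1,-)$) has an explicit formula in those coordinates. Throughout I would fix $a\le b$, write $\Gamma(a),\Gamma(b)$ for the critical vectors and $\s a\subseteq\s b$ for the supports, and recall from \propref{prop:basicGD}(c) that $\Gamma(\Delta(b,a))=2\Gamma(b)-\Gamma(a)$, while $\sigma(\Delta(b,a))=\s a$ by the definition of $\Delta$. For the complement, $\C(a,b)=\Gamma(a)+X_{\s a\cup\os b}$ has critical vector $\Gamma(a)$ (since $\Gamma(a)$ is already $\s a$-critical and $\Gamma(a)_i=0$ for $i\in\os b$ because $\s a\subseteq\s b$, so it is $(\s a\cup\os b)$-critical), and $\sigma(\C(a,b))=\s a\cup\os b$.

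For part (a): the left-hand side is $\C(\Delta(b,a),b)$, which by the definition of $\C$ equals $\Gamma(\Delta(b,a))+X_{\sigma(\Delta(b,a))\cup\os b}=(2\Gamma(b)-\Gamma(a))+X_{\s a\cup\os b}$. The right-hand side is $\Delta(1,\C(a,b))$; here $1=X_\Omega$ has $\Gamma(1)=\vec 0$, so $\Delta(1,\C(a,b))=(2\cdot\vec 0-\Gamma(a))+X_{\sigma(\C(a,b))}=-\Gamma(a)+X_{\s a\cup\os b}$. So I must check $(2\Gamma(b)-\Gamma(a))+X_{\s a\cup\os b}=-\Gamma(a)+X_{\s a\cup\os b}$, i.e.\ that $2\Gamma(b)\in X_{\s a\cup\os b}$, which holds because $\Gamma(b)$ is $\s b$-critical, hence supported on $\os b\subseteq\s a\cup\os b$. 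For part (b): $\Delta(b,a)=(2\Gamma(b)-\Gamma(a))+X_{\s a}$ directly from the definition, so I would show the right-hand side $b\meet\Delta(1,\C(a,b))$ equals this. By part (a) the term $\Delta(1,\C(a,b))$ equals $\C(\Delta(b,a),b)=(2\Gamma(b)-\Gamma(a))+X_{\s a\cup\os b}$; meeting with $b=\Gamma(b)+X_{\s b}$ gives (when the meet exists) an element with support $(\s a\cup\os b)\cap\s b=\s a$, and I must exhibit a common point — the vector $2\Gamma(b)-\Gamma(a)$ lies in $\Delta(b,a)\le b$, and it lies in $\C(\Delta(b,a),b)$ as its center, so the meet is nonempty and equals $(2\Gamma(b)-\Gamma(a))+X_{\s a}=\Delta(b,a)$.

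Alternatively, and more in the spirit of \lemref{lem:compGD}, part (b) follows formally from part (a) together with \lemref{lem:compGD}(b): applying $\C(\Delta(b,a),-)$ — wait, rather, from \lemref{lem:compGD}(d) one has $\C(\Delta(b,a),\C(\Delta(b,a),b))=b$, and combined with $b\meet\C(\Delta(b,a),b)=\Delta(b,a)$ from \lemref{lem:compGD}(b) (valid since $\Delta(b,a)\le b$ by \propref{prop:basicGD}(b)), substituting the identity of part (a) gives $b\meet\Delta(1,\C(a,b))=\Delta(b,a)$ immediately. I would present part (b) this way to keep the argument short and to mirror the classical Metropolis--Rota relation $\Delta(b,a)=b\meet\Delta(1,a\to b)$ rendered here via $\eqref{eq:one}$.

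The only real subtlety — the ``main obstacle'' — is the bookkeeping around partial meets: one must confirm that every meet written above genuinely exists before using its support formula, and this is exactly what \lemref{lem:compGD}(b) supplies for the expression $b\meet\C(\Delta(b,a),b)$. Modulo that observation the computation is mechanical, so I would simply note that the meets in question exist by \lemref{lem:compGD}(b) applied at $\Delta(b,a)\le b$, and otherwise let the coordinate formulas do the work.
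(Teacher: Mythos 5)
Your argument is the paper's computation run in the opposite direction. The paper first establishes part (b) together with $\Delta(\one,\C(a,b))\join b=\one$, so that $\Delta(\one,\C(a,b))$ is the complement of $b$ over $\D(b,a)$, and (a) then follows by uniqueness of complements; you compute (a) directly in coordinates and then obtain (b) either again by coordinates or formally from (a) together with \lemref{lem:compGD}(b), a clean and legitimate shortcut. In both proofs the decisive fact is the same, namely that $2\G b$ is supported on $\os b\subseteq\s a\cup\os b$.

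There is, however, a flaw in your setup. The claim that $\Gamma(\C(a,b))=\Gamma(a)$, justified by ``$(\G a)_i=0$ for $i\in\os b$ because $\s a\subseteq\s b$,'' is false: what $a\le b$ actually gives is $(\G a)_i=(\G b)_i$ for $i\notin\s b$, and these common coordinates can be nonzero. For example take $\Omega=\{1,2\}$, $\s a=\emptyset$, $\s b=\{1\}$, $\G a=\G b=e_2$; then $(\G a)_2\neq 0$ although $2\in\os b$, and $\Gamma(\C(a,b))=\vec 0\neq\G a$. Your conclusion $\Delta(\one,\C(a,b))=-\G a+X_{\s a\cup\os b}$ is nevertheless correct, but for a different reason: since $\Gamma(\one)=\vec 0$, the identity $\Delta(\one,\vec v+X_B)=-\vec v+X_B$ holds for \emph{any} representative $\vec v$ of the coset, critical or not, because replacing $\vec v$ by $\vec v+\vec w$ with $\vec w\in X_B$ does not change $-\vec v+X_B$. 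That observation, not the false critical-vector claim, is what your step needs. The same caveat applies when you call $2\G b-\G a$ ``the center'' of $\C({\relax\D(b,a)},b)$ in your direct verification of (b); it is merely a representative of that coset, but membership is all the argument actually uses. With those repairs the proof is sound and in substance the same as the paper's.
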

\begin{proof}
	The second assertion will be proven as part of the first. 
	
	$\Delta( 1, {\C(a, 
		b)})=-\G a + X_{\s a\cup \os b}$ and 
		$\Delta( b,  a)=2\G b-\G a + X_{\s a}$. 
	
	\begin{enumerate}[-]
		\item  $\Delta( b,  a)\le \Delta( 1, {\C(a, 
		b)})$ as 
		\begin{align*}
			[2\G b-\G a,  -\G a]&=[2\G b, 0]\\
			&=[\G b, 0]\\
			&\subseteq\os b \subseteq\s a\cup\os b. 
		\end{align*}
	
		\item  
		\begin{align*}
			\Delta( 1, {\C(a, b)})\meet b&=2\G b-\G a+X_{\s b\cap(\s 
			a\cup\os b)}\\
			&=2\G b-\G a+X_{\s a}\\
			&=\Delta( b,  a). 
		\end{align*}
	
		\item  
		\begin{align*}
			\Delta( 1, {\C(a, b)})\join b&=\G a+X_{\s b\cup(\s 
			a\cup\os b)}\\
			&=\G a+X_{\Omega}\\
			&= 1. 
		\end{align*}
	\end{enumerate}
	The result is now immediate. 
\end{proof}

We will now describe some of 
the algebraic properties of this structure. 
We note that as $\Delta$ has fixed points, this class of 
examples is not a cubic implication algebra, but a careful analysis of fixed 
points reveals a decomposition into a cubic implication algebra and a Boolean 
algebra. In general the cubic implication algebras so obtained are not MR-algebras.
This decomposition corresponds to thinking of a multicube as a family 
of nested cubes. 

\begin{defn}
	$u$ is \emph{somewhere invariant} iff there is some $v\geq u$ 
	with $\D(v, u)=u$. If $u$ is not somewhere invariant then we say 
	it is \emph{nowhere invariant}. 
\end{defn}
\def\si{somewhere invariant}
\def\ni{nowhere invariant}

\begin{lem}
	$u$ is \si\ iff $\comp{[\Gamma(u), \zero]}\cap{\os 
	u}\not=\emptyset$.
\end{lem}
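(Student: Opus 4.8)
The plan is to translate everything into the canonical representation $u=\Gamma(u)+X_{\sigma(u)}$ and reduce to a set-theoretic statement about $\Omega$. First recall that since $\Gamma(u)$ is critical, its support $[\Gamma(u),\zero]$ is contained in $\os u$; in particular $\sigma(u)$ and $[\Gamma(u),\zero]$ are disjoint. By Proposition~\ref{prop:basicGD}(f) we have $\Delta(v,u)=u$ iff $\Gamma(v)=\Gamma(u)$, so $u$ is somewhere invariant iff there is some $v>u$ with $\Gamma(v)=\Gamma(u)$. Writing such a $v$ as $\Gamma(v)+X_{\sigma(v)}$, Lemma~\ref{lem:TEST}(a),(b) shows $v>u$ is equivalent to $\sigma(u)\subsetneq\sigma(v)$; and for $\Gamma(u)+X_{\sigma(v)}$ to actually be an element of $P$ with centre $\Gamma(u)$ we need $\Gamma(u)$ to be $\sigma(v)$-critical, i.e. $\sigma(v)\cap[\Gamma(u),\zero]=\emptyset$. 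Thus $u$ is somewhere invariant iff $\sigma(u)$ admits a strict enlargement inside $\Omega$ that is still disjoint from $[\Gamma(u),\zero]$, and since $\sigma(u)$ is itself disjoint from $[\Gamma(u),\zero]$ this is possible exactly when there is an index lying in neither $\sigma(u)$ nor $[\Gamma(u),\zero]$, i.e. when $\comp{[\Gamma(u),\zero]}\cap\os u\neq\emptyset$.

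Concretely, for the ($\Leftarrow$) direction I will pick $j\in\comp{[\Gamma(u),\zero]}\cap\os u$ and put $v=\Gamma(u)+X_{\sigma(u)\cup\{j\}}$. Since the support of $\Gamma(u)$ lies in $\os u$ and does not contain $j$, it is disjoint from $\sigma(u)\cup\{j\}$, so $\Gamma(u)$ is critical for $v$; hence $\Gamma(v)=\Gamma(u)$ and $\sigma(v)=\sigma(u)\cup\{j\}\supsetneq\sigma(u)$, giving $u<v$ by Lemma~\ref{lem:TEST} and $\Delta(v,u)=u$ by Proposition~\ref{prop:basicGD}(f). For ($\Rightarrow$), from a witness $v>u$ with $\Delta(v,u)=u$ Proposition~\ref{prop:basicGD}(f) gives $\Gamma(v)=\Gamma(u)$, and Lemma~\ref{lem:TEST}(a),(b) yields an index $j\in\sigma(v)\setminus\sigma(u)$. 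Then $j\in\os u$, and since $\Gamma(v)=\Gamma(u)$ is $\sigma(v)$-critical and $j\in\sigma(v)$ we get $\Gamma(u)_j=0$, i.e. $j\notin[\Gamma(u),\zero]$; so $j$ witnesses $\comp{[\Gamma(u),\zero]}\cap\os u\neq\emptyset$.

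The only point requiring care — and the one I would single out as the crux — is the realizability constraint: a coset $\Gamma(u)+X_S$ with $\sigma(u)\subseteq S$ represents an element of $P$ with the expected $\Gamma$ and $\sigma$ only when $\Gamma(u)$ stays $S$-critical, and it is exactly here that the hypothesis $j\notin[\Gamma(u),\zero]$ is used. Everything else is routine unwinding of the definitions of $\Gamma$, $\sigma$, criticality, and the support function $[-,\zero]$.
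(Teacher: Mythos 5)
Your proof is correct and follows essentially the same route as the paper's: use Proposition~\ref{prop:basicGD}(f) to reduce ``$\Delta(v,u)=u$'' to ``$\Gamma(v)=\Gamma(u)$,'' construct $v=\Gamma(u)+X_{\sigma(u)\cup\{j\}}$ for the forward implication, and read off an index $j\in\sigma(v)\setminus\sigma(u)$ for the converse. The only difference is that you spell out the realizability check (that $\Gamma(u)$ stays critical for the enlarged $\sigma$, and that $\Gamma(u)_j=0$ in the converse), which the paper leaves implicit with ``clearly $\Gamma(v)=\Gamma(u)$''; this is a reasonable bit of extra detail, not a divergence of method.
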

\begin{proof}
	We want to show that there is some $v>u$ with
	$\Delta(v, u) =u$ iff there is some $i\notin\s u$ with $\G u_{i}=0$.
	
	Suppose that such an $i$ exists. Then let
	$v=\G u +X_{\s u\cup\Set{i}}$. Clearly $\G v=\G u$ and $u<
	v$ so that $\Delta(v, u)=u$.
	
	Conversely, if $u<v$ and $\Delta(v, u)=u$ then we have
	$\G u=\G v$ and $\s u\subsetneq\s v$ and so if $i\in\s v\setminus\s 
	u$ we have the desired $i$.
\end{proof}

\begin{cor}\label{cor:ni}
	$u$ is \ni\ iff $\os u\subseteq[\G u, \zero]$. 
\end{cor}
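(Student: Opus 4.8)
The plan is to obtain this simply as the contrapositive of the lemma just proved. That lemma characterises $u$ being \si\ by the condition $\comp{[\G u, \zero]}\cap \os u \neq\emptyset$; since being \ni\ is by definition the negation of being \si, $u$ is \ni\ exactly when $\comp{[\G u, \zero]}\cap \os u =\emptyset$. First I would note that $\os u$ and $[\G u, \zero]$ are both subsets of $\Omega$ (the latter being the support of $\G u$), so all complements are taken inside $\Omega$. Then I would apply the elementary identity that a set $S$ is disjoint from $\comp T$ precisely when $S\subseteq T$, taken with $S=\os u$ and $T=[\G u, \zero]$; this rewrites the emptiness condition as $\os u\subseteq[\G u, \zero]$, which is the claim.

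If one prefers not to invoke the lemma, a direct argument works equally well. By \propref{prop:basicGD}(f) a witness $v>u$ with $\D(v, u)=u$ is exactly a $v>u$ with $\G v=\G u$; and, writing $u=\G u+X_{\s u}$ and testing the candidates $v=\G u+X_{\s u\cup\oneSet i}$ for $i\in\os u$, such a $v$ exists iff $\s u$ can be strictly enlarged without disturbing the centre, i.e.\ iff there is $i\in\os u$ with $\G u_i=0$, i.e.\ $i\notin[\G u, \zero]$. Negating, $u$ is \ni\ iff every $i\in\os u$ satisfies $i\in[\G u, \zero]$, that is $\os u\subseteq[\G u, \zero]$.

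There is really no obstacle here; the only points to watch are the bookkeeping of which ambient set each overline refers to, and the (already recorded) fact that $[\G u, \zero]$ is the support of $\G u$, so that $i\notin[\G u, \zero]$ is literally the statement $\G u_i=0$.
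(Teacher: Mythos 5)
Your proof matches the paper's: the corollary is recorded there as ``Immediate from the theorem,'' meaning precisely the contrapositive-plus-set-identity argument you spell out in your first paragraph. The direct re-derivation in your second paragraph is a harmless extra, not a different route.
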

\begin{proof}
	Immediate from the theorem. 
\end{proof}

\begin{thm}
	If $u$ is \ni\ and $u\le v$ then $v$ is \ni. 
\end{thm}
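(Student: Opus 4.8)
The plan is to argue by contradiction using the characterization of nowhere invariance in \corref{cor:ni}. Suppose $u$ is \ni\ and $u\le v$, but $v$ is \si. By \corref{cor:ni} applied to $u$ we have $\os u\subseteq[\G u,\zero]$, i.e. every index outside $\s u$ is in the support of $\G u$. By the same corollary applied to $v$ (in its negated form, i.e. the preceding lemma), since $v$ is \si\ there is some index $i\notin\s v$ with $\G v_i=0$. The goal is to produce from this an index witnessing that $u$ is \si\ as well, contradicting the hypothesis.

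The key step is to relate $\G v$ and $\G u$ at such an index. First I would note that since $u\le v$ we have $\s u\subseteq\s v$ by \lemref{lem:TEST}(a), so $i\notin\s v$ gives $i\notin\s u$, i.e. $i\in\os u$. Now by the lemma stating ``$\mathbf a<\mathbf b$ implies $\Gamma(\mathbf b)_i\not=0 \Rightarrow \Gamma(\mathbf a)_i=\Gamma(\mathbf b)_i$'' (applied with $\mathbf a = u$, $\mathbf b = v$; if $u=v$ the statement is trivial), we get information about indices where $\G v$ is nonzero — but here $\G v_i=0$, so that lemma does not directly apply. Instead I would use the contrapositive reasoning through supports: we need to show $\G u_i=0$. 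Since $i\in\os u$ and $u$ is \ni, \corref{cor:ni} forces $i\in[\G u,\zero]$, meaning $\G u_i\neq 0$. On the other hand, because $u\le v$, \lemref{lem:TEST}(a) gives $\G v - \G u \in X_{\s v}$, so the $i$-th coordinate of $\G v - \G u$ must vanish whenever $i\notin\s v$; hence $\G v_i = \G u_i$. Combining $\G v_i = 0$ with $\G u_i\neq 0$ yields a contradiction.

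So the argument is short: the two facts ``$\G v-\G u\in X_{\s v}$ and $i\notin\s v$ imply $\G v_i=\G u_i$'' and ``$i\in\os u$ with $u$ \ni\ implies $\G u_i\neq 0$'' are incompatible with ``$i\notin\s v$ and $\G v_i=0$''. The main obstacle, such as it is, is making sure the coordinatewise reading of $X_{\s v}$ is justified — that $\vec x\in X_A$ iff the support of $\vec x$ is contained in $A$ — but this is exactly item (6) of the proposition on properties of $[-,-]$ together with the definition of $X_A$, so it is immediate. A mild subtlety is the degenerate case $u=v$, which is handled trivially since then $u$ \ni\ directly gives $v$ \ni. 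I would write the proof in the contrapositive form: assume $v$ is \si, extract the witnessing $i\notin\s v$ with $\G v_i=0$, observe $i\notin\s u$ and $\G u_i=\G v_i=0$, and conclude $u$ is \si\ by the lemma characterizing somewhere-invariance.
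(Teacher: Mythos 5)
Your proof is correct and uses essentially the same ingredients as the paper's own argument: the characterization of \ni\ via \corref{cor:ni}, the containment $\s u\subseteq\s v$, and the fact (from $\G v-\G u\in X_{\s v}$) that $\G u_i=\G v_i$ for $i\notin\s v$. The only difference is presentational — the paper argues directly (take $i\in\os v\subseteq\os u\subseteq[\G u,\zero]$, deduce $\G v_i=\G u_i\neq 0$, conclude $\os v\subseteq[\G v,\zero]$), whereas you run it by contradiction from a putative \si\ witness for $v$; the two are trivially interconvertible.
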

\begin{proof}
	We have $\os v\subseteq\os u\subseteq[\G u, \zero]$. So let
	$i\in\os v$. Then we have $\G v_{i}=\G u_{i}\not=0$ and so
	$i\in[\G v, \zero]$. 
\end{proof}

\begin{defn}
	Let $\mathcal N(P)=\Set{p\in P | p\text{ is \ni}}$. 
\end{defn}
\def\NP{\mathcal N(P)}

\begin{thm}
	$\NP$ is a cubic implication algebra.
\end{thm}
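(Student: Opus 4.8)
The plan is to verify directly that $\NP$, with the inherited join and the restricted $\Delta$, satisfies the axioms of a cubic implication algebra. Two things must be checked before the axioms: that $\NP$ is closed under $\join$, and that $\Delta$ restricted to $\NP$ is a \emph{total} partial operation in the sense required, i.e. that $\Delta(a,b)$ (defined when $b\le a$) lies in $\NP$ whenever $a\in\NP$. Closure under $\join$ follows from the preceding theorem: if $u,v\in\NP$ then $u\le u\join v$ and $v\le u\join v$, and since $u$ is nowhere invariant and $u\le u\join v$, the theorem gives that $u\join v$ is nowhere invariant. For $\Delta$: by \propref{prop:basicGD}(b) we have $\Delta(a,b)\le a$, so if $a\in\NP$ then $\Delta(a,b)\in\NP$ by the same theorem. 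So $\NP$ is an upward-closed subset of $P$ that is a subalgebra for the relevant operations, and in particular it inherits the semilattice structure and the $\Delta$ relations.

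Next I would observe that all of the cubic-implication-algebra axioms (a)--(f) of the first definition are \emph{conditional equations} whose hypotheses are inequalities $x\le y$ (or chains $x\le y\le z$), and whose terms involve only $\join$ and $\Delta$ — except that axioms (e) and (f) use the derived operation $xy=\Delta(1,\Delta(x\join y,y))\join y$. The point is that $1\in\NP$ (since $\os 1=\emptyset\subseteq[\G 1,\zero]$ trivially, as $1=\G 1 + X_\Omega$ with $\os 1 = \emptyset$), so the derived operation $xy$ computed inside $\NP$ agrees with the one computed in $P$; hence every axiom holds in $\NP$ because it holds in $P$. Thus the real content is: \emph{$P$ itself, with total $\join$ and the partial $\Delta$ of the reflection section, satisfies axioms (a)--(f)}. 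Axioms (a)--(e) are essentially \propref{prop:basicGD}: (a) is \propref{prop:basicGD}(a) read with $\Delta(x,\Delta(x,y))$, wait — (a) is ``$x\le y\Rightarrow\Delta(y,x)\join x=y$'', (c) is \propref{prop:basicGD}(d), (b) is \propref{prop:basicGD}(h), (d) is \propref{prop:basicGD}(e). So the remaining work is axiom (a) of the cubic definition ($\Delta(y,x)\join x=y$ for $x\le y$) and the two implication-algebra axioms (e) $(xy)y=x\join y$ and (f) $x(yz)=y(xz)$.

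For cubic-axiom (a): with $x\le y$, write $x=\G x+X_{\s x}$, $y=\G y + X_{\s y}$; then $\Delta(y,x)=(2\G y-\G x)+X_{\s x}$, and $\Delta(y,x)\join x = (2\G y-\G x)+X_{\s x\cup\s x\cup[2\G y-\G x,\G x]}= (2\G y -\G x)+X_{\s x\cup[\G y,\zero]}$, using $[2\G y-\G x,\G x]=[2\G y,\zero]=[\G y,\zero]$. Since $[\G y, \zero]\subseteq \os y$ (as $\G y$ is critical in $y$) one gets $\s x\cup[\G y,\zero]\subseteq \s y$; conversely $\s x\subseteq\s y$ and for $i\in\s y\setminus\s x$ one checks $\G y_i=\G x_i$ forces that the module $X_{\s x\cup[\G y,\zero]}$ already absorbs the discrepancy — this is exactly \lemref{lem:TEST}(a)/(c) bookkeeping, and it yields $\Delta(y,x)\join x = y$. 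For (e) and (f), the cleanest route is to not recompute: \lemref{lem:compGD}(g) already says $\bracket{P,\join,\C(\ ,\ )}$ is an implication algebra, and the lemma following it gives $\Delta(b,a)=b\meet\Delta(1,\C(a,b))$, which is precisely the shape of the identity \eqref{eq:one}; so if I show the derived $xy$ of the cubic definition coincides with the implication-algebra arrow $\C(x\join y, \cdot)$-style operation $\C(y, x\join y)$... more precisely, that $\Delta(1,\Delta(x\join y,y))\join y$ equals $x\to y$ in the implication algebra of \lemref{lem:compGD}(g), then (e),(f) are inherited from that implication algebra. The main obstacle I anticipate is exactly this last identification — matching the $\Delta$-derived product with the complementation-derived implication — together with the one genuinely new verification, cubic-axiom (a), where the partiality of $\Delta$ and the fact that joins can enlarge the index set $A$ make the index-set computation the delicate point; everything else is quotation of \propref{prop:basicGD} and \lemref{lem:compGD}.
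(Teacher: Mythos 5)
The central step is wrong: you claim that $\Delta(a,b)\le a$ together with $a\in\NP$ forces $\Delta(a,b)\in\NP$ ``by the same theorem,'' but the theorem you are invoking runs in the opposite direction -- it says that if $u$ is nowhere invariant and $u\le v$ then $v$ is nowhere invariant, i.e.\ $\NP$ is \emph{upward} closed. That does not make $\NP$ downward closed, and in fact it is not: $\zero+X_{\emptyset}$ lies below $\one=\zero+X_{\Omega}$, yet $\os{\zero+X_{\emptyset}}=\Omega\not\subseteq\emptyset=[\zero,\zero]$, so $\zero+X_{\emptyset}$ is somewhere invariant while $\one$ is always nowhere invariant. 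So $\Delta$-closure of $\NP$ requires an actual computation, and that computation is essentially the whole content of the paper's proof: for $u\le v$ both nowhere invariant, take $i\in\comp{[2\G v-\G u,\zero]}\cap\os u$; since $(\G v)_i$ is either $0$ or $(\G u)_i$, one gets $(\G v)_i=(\G u)_i=0$, and then $i\in\comp{[\G u,\zero]}\cap\os u$ contradicts nowhere invariance of $u$.

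There is a second, related gap. You attempt to verify cubic axiom (a), $\Delta(y,x)\join x=y$, in all of $P$; but it is \emph{false} in $P$: with $x=\zero+X_{\emptyset}\le y=\one$ one has $\G x=\G y=\zero$, so $\Delta(y,x)=x$ by \propref{prop:basicGD}(f), and $\Delta(y,x)\join x=x\ne y$. The identity holds only once $x$ is nowhere invariant, because then for $i\in\s y\setminus\s x$ one has $\G x_i\ne 0=\G y_i$, so $i\in[\G y,\G x]$, which is exactly what is needed to make the index sets agree. (Your computation also has a slip: $[2\G y-\G x,\G x]=[\G y,\G x]$, not $[\G y,\zero]$; they coincide only when $\G x=\zero$.) So both nontrivial steps -- $\Delta$-closure and axiom (a) -- are precisely where nowhere invariance must enter and cannot be deduced from the upward-closure theorem; the paper's proof carries out the first explicitly and cites \propref{prop:basicGD} and \lemref{lem:compGD} for the remainder, but what you have written fills neither hole.
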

\begin{proof}
	Provided we show that $\NP$ is closed under $\Delta$ the rest 
	follows from 
	\propref{prop:basicGD} and \lemref{lem:compGD}.
	
	If $u\le v$ are both in $\NP$ then
	$\s{\D( v,  u)}=\s u$ and $\G{\D( v,  u)}=2\G v - \G u$ so that
	suppose $i\in\comp{[\Gamma(\D( v,  u)), \zero]}\cap{\os 
	u}$. As $(2\G v-\G u)_{i}=0$ and we know that $(\G v)_{i}$ is either 
	zero or $(\G u)_{i}$ we see that $(\G v)_{i}=(\G u)_{i}=0$. 
	But then $i\in\comp{[\Gamma(u), \zero]}\cap{\os 
	u}$ contradicting the fact that $u$ is \ni.
\end{proof}

The interval $S=[\mathbf{0}+X_{\emptyset}, \mathbf{0}+X_{\Omega}]$ is a 
Boolean algebra of somewhere invariants. It is not hard to see that 
$P$ embeds as an upwards-closed subalgebra of $S\times \NP$ via the 
mapping 
$$
a+X_{A}\mapsto \brk<\mathbf{0}+X_{A\cup[\G a,\mathbf{0}]}, \G 
a+X_{\comp{[\G a, \mathbf{0}]}}>.
$$
This mapping will be explained in a more general case later.

Since $\NP$ is a cubic implication algebra, it embeds as an upwards-closed 
subalgebra of $\rsf I(B)$ for some Boolean algebra $B$, and so we see 
that $P$ is a multicubic implication algebra.

\subsection{Multicubes as Symmetric Implication Algebras}

In the example above we have a symmetric implication algebra with 
$$
T(\mathbf{b})=-\Gamma({ b})+X_{\sigma({ b})}.
$$

It is easy to see that $T^{2}=\text{id}$. We also have 
$$
(\mathbf{a}\to T(\mathbf{a}))\join \mathbf{b}\join T(\mathbf{b})=\one
$$
since 
\begin{align*}
    a\to T(a) &= (a\join T(a))\to T(a)\\
    a\join T(a) &= \G a+ X_{\s cup \s{T(a)}\cup[\G a, -\G a]}\\
    &= \G a + X_{\s a\cup\Set{i| \G a_{i}\not=0}}\\
    (a\join T(a))\to T(a) &= -\G a +X_{\s a\cup(\os a\cap\Set{i|\G 
    a_{i}=0})}\\
    &= -\G a+ X_{\Set{i|\G a_{i}=0}}\\
    &= -\G a+ X_{\comp{[\G a,\mathbf{0}]}}\\
    b\join T(b) &= \G b\restrict\Set{i|\G b_{i}=-\G b_{i}}+X_{\s b\cup\Set{i|\G b_{i}\not=0}}\\
    &= \mathbf{0}+X_{\s b\cup\Set{i|\G b_{i}\not=0}}\\
    \intertext{Therefore}
    (\mathbf{a}\to T(\mathbf{a}))\join \mathbf{b}\join T(\mathbf{b})&=
    c+X_{\comp{[\G a,\mathbf{0}]}\cup \s b\cup\Set{i|\G 
    b_{i}\not=0}\cup [\G a, \mathbf{0}]}\\
    &= \one.
\end{align*}

Furthermore we have 
$$
b\meet T(b\to a)
$$
exists whenever $b\geq a$.
\begin{align*}
    b\meet T(b\to a) & = (\G b+X_{\s b})\meet T(\G a+ X_{\s a\cup\os b})  \\
     & = (\G b+X_{\s b})\meet (-\G a+ X_{\s a\cup\os b})  
\end{align*}
exists iff 
$$
\G b\restrict\os b= -\G a\restrict (\os a\cap\s b)
$$
which is clearly true as the support sets are disjoint.

From this we see that we have a local structure in the variety 
$\mathcal A_{2}\join \mathcal A_{3}$.

Our aim is to show that every local structure in this variety can be 
described as a multicubic implication algebra and we start by describing the 
reflection operator defined by the symmetry $T$. Using this operator 
we then go on and recover a cubic component and a Boolean component of 
the structure and explain the connection between these two components.

\section{Delta Operators}

\begin{defn}\label{def:delta}
	A \emph{Delta-operator} on a multicubic implication algebra is a partial binary 
	function $\Delta$ such that
	\begin{enumerate}[(a)]
		\item $\Delta(b, a)$ is defined and less than $b$ whenever $a\le b$; 
		
		\item $\Delta(a, a)=a$; 
	
		\item if $a\le b$ then $\Delta(b, \Delta(b, a))=a$; 
	
		\item if $a\le b\le c$ then $\Delta(c, a)\le\Delta(c, b)$; 
	
		\item if $a\le b\le c$ then $\Delta(c, \Delta(b, a))=\Delta(\Delta(c, 
		b), \Delta(c, a))$; 
	
		\item if $a<b$ then either $\Delta(b, a)=a$ or $\Delta(b, a)$ and $a$ 
		have no lower bound; 
	
		\item if $a\le b$ then $\Delta(b, a)=b\meet\Delta(\one, b\to a)$. 
	\end{enumerate}
\end{defn}

We note that the $\Delta$ function defined on multicubes is such an 
operator. In the class of local structures in the variety $\mathcal 
A_{2}\join \mathcal A_{3}$ the symmetry $T$ allows the definition of a
Delta operator which then defines a cubic implication algebra 
contained in the multicubic implication algebra. 

\begin{defn}\label{def:deltaX}
    Let $\rsf M$ be a locally symmetric element of the variety $\mathcal 
A_{2}\join \mathcal A_{3}$ with involution $T$,

	We define the 
	partial operation $\Delta$ by:
	$$
		\Delta(b, a)=b\meet T(b\to a)\text{ whenever 
		}a\le b. 
	$$
\end{defn}

We will prove that $\Delta$ is a Delta-operator for every $x$. 

\begin{lem}\label{lem:one}
	$$
	T(a)=\Delta(\one, a). 
	$$
\end{lem}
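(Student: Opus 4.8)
The plan is to unwind \defref{def:deltaX} at $b=\one$ and reduce to two elementary facts about implication algebras. First I would observe that $\one=x\to x$ is the greatest element of $\rsf M$, so $a\le\one$ always holds and hence $\Delta(\one, a)$ is defined; moreover, for any $y\in\rsf M$ the meet $\one\meet y$ exists and equals $y$ (the greatest lower bound of $y$ with a larger element being $y$ itself), so in particular $\one\meet T(a)$ exists. Thus $\Delta(\one, a)=\one\meet T(\one\to a)=T(\one\to a)$, and it remains only to identify $\one\to a$ with $a$.

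For that I would invoke the standard identity $\one\to a=a$, valid in every implication algebra. A self-contained justification: $a\le\one\to a$ since the exchange law gives $a\to(\one\to a)=\one\to(a\to a)=\one\to\one=\one$; and $\one\to a\le a$ since $(\one\to a)\to a=(a\to\one)\to\one=\one\to\one=\one$, using $(x\to y)\to y=(y\to x)\to x$ together with $a\to\one=\one$. Hence $\one\to a=a$, and therefore $\Delta(\one, a)=T(\one\to a)=T(a)$, which is the claim.

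There is essentially no obstacle here: the only substantive input is the identity $\one\to a=a$, and everything else is forced by the shape of \defref{def:deltaX} and the fact that $\one$ is the top of the semilattice. In particular, the hypotheses that $T$ is an automorphism of order two and that $\rsf M$ lies in $\mathcal A_{2}\join\mathcal A_{3}$ play no role in this lemma; they will be needed only when checking the remaining clauses of \defref{def:delta}.
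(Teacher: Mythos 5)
Your proof is correct and takes essentially the same route as the paper: unwind \defref{def:deltaX} at $b=\one$, use $\one\to a=a$, and simplify $\one\meet T(a)=T(a)$. The only difference is that you supply a self-contained derivation of the standard implication-algebra identity $\one\to a=a$, which the paper takes for granted.
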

\begin{proof}
	\begin{align*}
		\Delta(\one, a) & =\one\meet T(\one\to a)  \\
		 & =\one\meet T(a)= T(a). 
	\end{align*}
\end{proof}

\begin{cor}\label{cor:one}
	$$
	\Delta(b, a)=b\meet\Delta(\one, b\to a). 
	$$
\end{cor}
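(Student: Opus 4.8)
The plan is to derive the identity by a single substitution into the definition of $\Delta$, using the lemma just proved. The statement $\Delta(b,a)=b\meet\Delta(\one,b\to a)$ is asserted only where it makes sense, namely for $a\le b$, since that is exactly the domain on which $\Delta(b,a)$ was defined in \defref{def:deltaX}; so throughout I work under the standing hypothesis $a\le b$.

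First I would unfold the definition: by \defref{def:deltaX}, $\Delta(b,a)=b\meet T(b\to a)$, and the right-hand meet exists precisely because we are in the locally symmetric case and $a\le b$ forces $b\to a$ to be such that $b\meet T(b\to a)$ exists. Next I would invoke \lemref{lem:one} with the element $b\to a$ in place of $a$: since $T$ is a total automorphism, $T(b\to a)$ is a perfectly good element of $\rsf M$, and the lemma gives $T(b\to a)=\Delta(\one,b\to a)$. Substituting this equality into the previous line yields $\Delta(b,a)=b\meet\Delta(\one,b\to a)$, which is exactly the claim. Because the meet on the left was already shown to exist, the meet on the right exists as well, so there is no definedness gap to worry about.

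I do not expect any real obstacle here: the content of the corollary is entirely absorbed into \lemref{lem:one}, and what remains is a mechanical rewrite of one factor of a meet. The only point that deserves a sentence of care in the write-up is to remark that \lemref{lem:one} is applicable to an arbitrary argument (here $b\to a$), so that no additional hypothesis beyond $a\le b$ is needed; everything else follows immediately.
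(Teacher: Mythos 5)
Your proof is correct and matches the paper's approach exactly: the paper likewise dismisses the corollary as ``Immediate from the definition and the lemma,'' meaning \defref{def:deltaX} and \lemref{lem:one}, and you have simply spelled out that one-line argument. Your remarks on existence of the meet and on applying \lemref{lem:one} to the element $b\to a$ are sound and add no new content beyond what the paper takes for granted.
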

\begin{proof}
	Immediate from the definition and the lemma. 
\end{proof}

\begin{lem}\label{lem:two}
	$$
		\Delta(b, a)\le b. 
	$$
\end{lem}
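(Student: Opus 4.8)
The plan is to argue directly from \defref{def:deltaX}, since the statement is essentially a definitional unwinding once one checks that the relevant meet exists. Recall that $\Delta(b,a)$ is only asserted to be defined when $a\le b$, so I will take $a\le b$ as the standing hypothesis throughout.

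First I would invoke \defref{def:deltaX} to write $\Delta(b,a)=b\meet T(b\to a)$. The one thing that needs justification here is that this meet genuinely exists: because $\rsf M$ is a locally symmetric implication algebra, the meet $x\meet T(x\to y)$ exists whenever $x\ge y$, and applying this with $x=b$, $y=a$ (legitimate since $a\le b$) gives exactly what we need. Once the meet is known to exist, the conclusion is immediate: in any meet-semilattice (or any poset in which the meet in question exists), $b\meet z\le b$ for every $z$, so in particular $b\meet T(b\to a)\le b$, i.e. $\Delta(b,a)\le b$. This also matches clause (a) of \defref{def:delta}, so the lemma is really just verifying that the operation of \defref{def:deltaX} satisfies the first half of that clause.

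There is no serious obstacle here; the only point of substance is the appeal to local symmetry to guarantee existence of $b\meet T(b\to a)$, and that is precisely the hypothesis built into the class of algebras under consideration. I would phrase the proof in two lines: cite \defref{def:deltaX} for the equality $\Delta(b,a)=b\meet T(b\to a)$, note existence of the meet from local symmetry, and conclude $\Delta(b,a)\le b$ because a meet never exceeds either of its arguments.
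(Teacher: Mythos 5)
Your argument is exactly the paper's: the paper simply says ``Trivial from the definition,'' which unpacks to the observation that $\Delta(b,a)=b\meet T(b\to a)$ is a meet with $b$ and hence $\le b$. Your extra remark about local symmetry guaranteeing existence of the meet is a sound and appropriate piece of care, but otherwise the proofs coincide.
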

\begin{proof}
	Trivial from the definition. 
\end{proof}

\begin{lem}\label{lem:three}
	$$
		b\to\Delta(b, a)=T(b\to a). 
	$$
\end{lem}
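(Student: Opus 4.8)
The plan is to strip $b\to\Delta(b,a)$ down to a pure order statement and then feed that statement to the equational basis for $\mathcal A_{2}\join\mathcal A_{3}$. Throughout, $a\le b$, so by local symmetry (the meet hypothesis of \defref{def:deltaX}) the element $\Delta(b,a)=b\meet T(b\to a)$ exists; this is the only place a partial operation is used, everything afterwards staying inside the totally-defined implication-algebra-with-$T$ structure. Since $\to$ distributes over an existing meet and $b\to b=\one$,
$$
b\to\Delta(b,a)=b\to\bigl(b\meet T(b\to a)\bigr)=(b\to b)\meet\bigl(b\to T(b\to a)\bigr)=b\to T(b\to a),
$$
so it suffices to prove $b\to T(b\to a)=T(b\to a)$.

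Next I would convert this into an inequality. Using that $T$ is an automorphism, the permutation law $x\to(y\to z)=y\to(x\to z)$, and self-distributivity $x\to(y\to z)=(x\to y)\to(x\to z)$,
$$
b\to T(b\to a)=b\to\bigl(T(b)\to T(a)\bigr)=T(b)\to\bigl(b\to T(a)\bigr)=(T(b)\to b)\to\bigl(T(b)\to T(a)\bigr).
$$
Since $u\to v=v$ holds exactly when $u\join v=\one$, and since $(T(b)\to b)\join(T(b)\to T(a))=T(b)\to(b\join T(a))$, the right-hand side equals $T(b)\to T(a)=T(b\to a)$ if and only if $T(b)\to(b\join T(a))=\one$, i.e. if and only if $T(b)\le b\join T(a)$. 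Applying the automorphism $T$ once more, this is equivalent to $b\le T(b)\join a$.

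Finally I would establish $b\le T(b)\join a$ from the equational basis $(x\to T(x))\join y\join T(y)=\one$ of $\mathcal A_{2}\join\mathcal A_{3}$, specialised at $x:=b$, $y:=a$, which gives $(b\to T(b))\join a\join T(a)=\one$. Because $a\le b$ we have $T(a)\le T(b)$, and because $x\le y\to x$ always holds we have $T(b)\le b\to T(b)$ and $a\le b\to a$; hence both $a$ and $T(a)$ lie below $(b\to T(b))\join(b\to a)$, so that
$$
\one=(b\to T(b))\join a\join T(a)\le(b\to T(b))\join(b\to a),
$$
giving $(b\to T(b))\join(b\to a)=\one$. Since $(b\to s)\join(b\to t)=b\to(s\join t)$, this reads $b\to(T(b)\join a)=\one$, i.e. $b\le T(b)\join a$, as required.

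The one subtle point is the middle step: passing from the implicational expression $b\to T(b\to a)$ to the order inequality $b\le T(b)\join a$ depends on chaining several genuine implication-algebra identities (permutation, self-distributivity, $(u\to w)\to w=u\join w$, and $u\to v=v\iff u\join v=\one$) together with the fact that $T$ is an order automorphism, which is what lets one swap $T(b)\le b\join T(a)$ for $b\le T(b)\join a$. One must also keep track that the only existence hypothesis consumed anywhere is that of $b\meet T(b\to a)$, supplied exactly by local symmetry.
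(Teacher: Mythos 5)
Your proof is correct, and it reaches the target by a genuinely different path from the paper's. The paper's proof is shorter: it simply verifies that $T(b\to a)$ is the relative complement of $b$ over $\Delta(b,a)$, i.e.\ that $b\meet T(b\to a)=\Delta(b,a)$ (by definition) and $b\join T(b\to a)=\one$, the latter coming from the equational basis with the substitution $x:=b\to a$, $y:=a$ and some rearranging inside the Boolean interval $[\Delta(b,a),\one]$. You instead strip the equation to $b\to T(b\to a)=T(b\to a)$ via $\to$/$\meet$-distributivity, then push it through permutation and self-distributivity down to the order inequality $b\le T(b)\join a$, and prove that from the equational basis under the substitution $x:=b$, $y:=a$. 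The two proofs converge on equivalent facts ($b\join T(b\to a)=\one$ and $T(b)\le b\join T(a)$ are related by applying $T$ and using $T(b\to a)=T(b)\to T(a)$), but your substitution into the axiom is different and your organizing principle (reduce to a pure inequality) differs from the paper's (exhibit the relative complement). The cost of your route is that it leans on a longer chain of implication-algebra identities (the $u\to v=v\iff u\join v=\one$ criterion, $(x\to y)\join(x\to z)=x\to(y\join z)$, permutation, self-distributivity), all of which are legitimate; what it buys is making explicit exactly which identities are consumed, whereas the paper's complement argument carries that machinery implicitly inside the Boolean interval.
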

\begin{proof}
	As 
	\begin{align*}
		b\meet T(b\to a) & =\Delta(b, a).\\
		\intertext{and we also have }
		1 &= T(a)\join a\join((b\to a)\to T(b\to a))\\
		&= a\join((b\to a)\to T(b\to a)) && \text{ as 
		}T(a)\le T(b\to a)\\
		&= ((b\to a)\to a)\join T(b\to a)\\
		&= b\join T(b\to a) && \text{as} (b\to a)\to a=b\join 
		a=b.
	\end{align*}
	Thus
	we see that $T(b\to a)$ is the complement of $b$ over $\Delta(b, 
	a)$ and so equals $b\to\Delta(b, a)$. 
\end{proof}

\begin{lem}\label{lem:four}
	$$
		\Delta(b, \Delta(b, a))=a. 
	$$
\end{lem}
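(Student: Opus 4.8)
The plan is to verify $\Delta(b,\Delta(b,a))=a$ by unwinding the definition twice and using the previous two lemmas. First I would observe that $\Delta(b,a)\le b$ by \lemref{lem:two}, so $\Delta(b,\Delta(b,a))$ is defined, and by \defref{def:deltaX} it equals $b\meet T\bigl(b\to\Delta(b,a)\bigr)$. Now apply \lemref{lem:three}, which says $b\to\Delta(b,a)=T(b\to a)$; substituting gives $\Delta(b,\Delta(b,a))=b\meet T\bigl(T(b\to a)\bigr)=b\meet(b\to a)$, since $T$ is an involution. Finally, in any implication algebra $b\meet(b\to a)$ exists and equals $b\meet a$; since $a\le b$ this is just $a$. (Alternatively one can note $b\meet(b\to a)=a$ directly: $b\to a\ge a$ because $a\le b$ forces $b\to a$ to be the relative complement of $b$ above $a$, and meeting back with $b$ recovers $a$.)

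The only point needing a little care is the existence of the meet defining $\Delta(b,\Delta(b,a))$ — i.e. that $\Delta$ is genuinely applicable to the pair $(b,\Delta(b,a))$. This is exactly where locally symmetric implication algebras were built for: the meet $b\meet T(b\to a)$ exists whenever $a\le b$, and here the inner argument $\Delta(b,a)$ is $\le b$ by \lemref{lem:two}, so the hypothesis of \defref{def:deltaX} is met. I expect this existence check, rather than the algebraic identity itself, to be the main (and only mild) obstacle; once it is in place the computation is a three-line substitution chain through \lemref{lem:three} and $T^2=\mathrm{id}$.

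In summary, the key steps in order are: (1) note $\Delta(b,a)\le b$ so the outer $\Delta$ is defined; (2) expand via the definition to $b\meet T(b\to\Delta(b,a))$; (3) rewrite the inner implication using \lemref{lem:three} as $T(b\to a)$; (4) cancel the two $T$'s; (5) simplify $b\meet(b\to a)$ to $a$ using the implication-algebra identity together with $a\le b$.
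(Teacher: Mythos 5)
Your argument is exactly the paper's proof: expand $\Delta(b,\Delta(b,a))$ by definition, rewrite $b\to\Delta(b,a)$ as $T(b\to a)$ via \lemref{lem:three}, cancel the two $T$'s, and simplify $b\meet(b\to a)$ to $a$. The extra remarks about existence of the meet are a sensible sanity check but add nothing beyond what the paper's definitions already guarantee.
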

\begin{proof}
	\begin{align*}
		\Delta(b, \Delta(b, a)) & =b\meet T(b\to\Delta(b, a))  \\
		 & =b\meet T(T(b\to a))  \\
		 & =b\meet (b\to a)  &&\text{ as }T^{2}=\text{id}\\
		 & =a. 
	\end{align*}
\end{proof}

\begin{lem}\label{lem:five}
	Let $a\le b\le c$. Then
	$$
		\Delta(c, a)\le \Delta(c, b). 
	$$
\end{lem}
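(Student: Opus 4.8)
Proof proposal for Lemma~\ref{lem:five} ($\Delta(c,a)\le\Delta(c,b)$ when $a\le b\le c$).

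The plan is to reduce the claim to a statement purely about the implication-algebra reduct and the involution $T$, then use the already-established \lemref{lem:three} together with \corref{cor:one}. First I would record the two expressions we must compare: by \corref{cor:one}, $\Delta(c,a)=c\meet\Delta(\one,c\to a)=c\meet T(c\to a)$ and $\Delta(c,b)=c\meet T(c\to b)$. Since $a\le b$ we have $c\to b\le c\to a$ in the implication algebra (the map $x\mapsto c\to x$ is order-preserving on $[a,c]$, as $c\to x$ is the relative complement of $c$ over $x$ inside the Boolean interval $[x,\one]$ — or directly from $(c\to b)\to(c\to a)=c\to(b\to a)=\one$). Applying the automorphism $T$, which is order-preserving, gives $T(c\to b)\le T(c\to a)$. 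Meeting both sides with $c$ — and here both meets are known to exist, being exactly $\Delta(c,b)$ and $\Delta(c,a)$ by hypothesis that $\Delta$ is defined on comparable pairs — yields $\Delta(c,a)=c\meet T(c\to a)\ge c\meet T(c\to b)=\Delta(c,b)$...

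Wait — that inequality points the wrong way, so the naive monotonicity argument cannot be the whole story; this is exactly where the subtlety lies. The point is that $c\meet(-)$ is \emph{not} globally monotone here because the meet is only a partial operation, and more importantly $T(c\to a)$ and $T(c\to b)$ are both below $c\to$(something) but their meets with $c$ sit inside the interval $[\Delta(c,\one),c]$ where the order can reverse relative to the ambient one. The honest route is: inside the Boolean interval $[a,c]$ (which is a Boolean algebra by \lemref{lem:compGD}(f)-style reasoning, or because $[a,c]$ in any implication algebra is Boolean), $\Delta(c,-)$ restricted there is an order-\emph{reversing} involution — indeed $\Delta(c,x)$ for $x\in[a,c]$ behaves like complementation relative to $[a,c]$, matching \lemref{lem:compGD}(c). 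So I would instead prove: for $x\in[a,c]$, $\Delta(c,x)\in[\Delta(c,c),c]=[?,c]$, and $x\mapsto\Delta(c,x)$ reverses order on $[a,c]$, hence $a\le b$ gives $\Delta(c,a)\ge\Delta(c,b)$ — again reversed. Since the lemma as stated asserts $\Delta(c,a)\le\Delta(c,b)$, the correct monotonicity is genuinely the increasing one, so the resolution must be that $\Delta(c,x)$ as $x$ ranges over $[a,c]$ does \emph{not} land in a fixed Boolean interval on which $\Delta(c,-)$ is a reversing complement; rather $\Delta(c,a)$ and $\Delta(c,b)$ have different "$\sigma$-parts".

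So the clean approach: use \lemref{lem:three}, which gives $c\to\Delta(c,a)=T(c\to a)$ and $c\to\Delta(c,b)=T(c\to b)$, together with the fact (from \lemref{lem:two}) that both $\Delta(c,a)\le c$ and $\Delta(c,b)\le c$. In an implication algebra, $u\le v$ with $u,v\le c$ is equivalent to $(c\to v)\le(c\to u)$ — complementation over $c$ is order-reversing on $[\,\cdot\,,c]$. Hence $\Delta(c,a)\le\Delta(c,b)$ iff $c\to\Delta(c,b)\le c\to\Delta(c,a)$ iff $T(c\to b)\le T(c\to a)$ iff $c\to b\le c\to a$ (applying $T^{-1}=T$), and this last holds because $a\le b$ makes $c\to b\le c\to a$. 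I would write this as a short chain of iff's. The main obstacle — and the thing to get exactly right — is the lemma that relative complementation $x\mapsto c\to x$ on the principal ideal $\{x:x\le c\}$ is an order-reversing bijection onto itself in any implication algebra; once that is stated (it follows from $(c\to x)\to(c\to y)=c\to(x\to y)$ and $c\to(c\to x)=x$ for $x\le c$), the rest is a two-line application of the order-preserving property of $T$ and \lemref{lem:three}.
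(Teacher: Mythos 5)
Your argument contains a sign error in the basic monotonicity of $\to$, and this error drives the entire detour that makes up most of the proposal. In an implication algebra, $\to$ is order-\emph{preserving} in the second argument: from $a\le b$ one gets $c\to a\le c\to b$ (for instance $(c\to a)\to(c\to b)=c\to(a\to b)=c\to\one=\one$, or as the paper writes it, $c\to b=(c\to a)\join b\geq c\to a$). You asserted the reverse, $c\to b\le c\to a$, and the ``justification'' you offered, $(c\to b)\to(c\to a)=c\to(b\to a)=\one$, requires $b\to a=\one$, i.e.\ $b\le a$, which is the opposite of the hypothesis. With the correct direction, the ``naive'' argument you abandoned is in fact complete and is exactly the paper's proof: $c\to a\le c\to b$, hence $T(c\to a)\le T(c\to b)$ since $T$ is an order automorphism, hence $c\meet T(c\to a)\le c\meet T(c\to b)$ since meet (where defined) is monotone, i.e.\ $\Delta(c,a)\le\Delta(c,b)$. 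There is no subtlety to resolve; it was manufactured by the sign error.

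The ``clean approach'' at the end is not a valid repair. It rests on the claim that for $u,v\le c$, $u\le v$ is equivalent to $c\to v\le c\to u$, i.e.\ that $x\mapsto c\to x$ is order-reversing on $\Set{x | x\le c}$. This is false: the map is order-preserving (same monotonicity as above; a concrete counterexample is $B=\wp(\Set{1,2,3})$, $c=\Set{1,2}$, $u=\Set{1}$, $v=\Set{1,2}$, giving $c\to u=\Set{1,3}<\Set{1,2,3}=c\to v$). Consequently your chain of iffs asserts $\Delta(c,a)\le\Delta(c,b)\iff T(c\to b)\le T(c\to a)\iff c\to b\le c\to a$, and the rightmost statement is in general \emph{false} when $a<b$. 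The two errors (a reversed equivalence at the start and a reversed monotonicity at the end) happen to land you back on the desired conclusion, but the intermediate statements in the chain are not true, so this is not a proof. Using \lemref{lem:three} in the way you sketch is perfectly viable, but the equivalence must be stated correctly as $\Delta(c,a)\le\Delta(c,b)\iff c\to\Delta(c,a)\le c\to\Delta(c,b)\iff T(c\to a)\le T(c\to b)\iff c\to a\le c\to b$, which holds since $a\le b$.
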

\begin{proof}
	As $a\le b\le c$ we have $c\to b= (c\to a)\join b\geq c\to a$. Thus
	\begin{align*}
		\Delta(c, b) & =c\meet T(c\to b)  \\
		 & \geq c\meet T(c\to a)  \\
		 & =\Delta(c, a). 
	\end{align*}
\end{proof}

\begin{lem}\label{lem:six}
	Let $a\le b$ and $T(a)=a$. Then $T(b)=b$. 
\end{lem}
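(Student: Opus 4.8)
The plan is to work entirely inside the implication-algebra structure and use the hypothesis $T(a)=a$ together with the equational basis $(x\to T(x))\join y\join T(y)=1$ that governs the variety $\mathcal A_2\join\mathcal A_3$. First I would record the elementary consequences of $a\le b$: namely $b\to a$ is the complement of $b$ over $a$ in the Boolean interval $[a,1]$, so $(b\to a)\to a = b$ and $b\meet(b\to a)=a$. Then, applying the automorphism $T$ to these identities (using that $T$ preserves $\to$, $\join$, and whatever meets exist), I get $T(b)\to T(a) = T(b\to a)$, hence $T(b)\to a = T(b\to a)$ since $T(a)=a$. The target $T(b)=b$ will follow once I show $T(b)$ and $b$ have the same complement over $a$ in $[a,1]$, because complementation in a Boolean algebra is injective; equivalently it suffices to show $T(b)\ge a$ and $b\join T(b\to a) = 1$ and $b\meet T(b\to a) = a$, or more simply that $T(b)$ and $b$ lie in the same interval $[a,1]$ and $b\to a = T(b)\to a$.

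The key step is to show $a\le T(b)$ and $b\to a = T(b)\to a$. For the first, apply $T$ to $a\le b$: since $T$ is an order automorphism (an automorphism of the implication algebra, hence order-preserving both ways), $T(a)\le T(b)$, and $T(a)=a$ gives $a\le T(b)$. For the second, I would use \lemref{lem:three}-style reasoning, or more directly the following: in the Boolean interval $[a,1]$ the element $T(b)$ has complement $T(b)\to a$, and I must identify this with $b\to a$. From $T(b\to a)$ being the complement of $T(b)$ over $T(a)=a$, and from the earlier computation $T(b)\to a = T(b\to a)$, it is enough to prove $T(b\to a) = b\to a$, i.e.\ that $b\to a$ is $T$-fixed. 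Here is where the equational basis enters: taking $y = b\to a$ in the identity $(x\to T(x))\join y\join T(y)=1$ does not immediately help, so instead I would argue that $b\to a \le b \to a$ trivially, and use that $a = T(a)\le T(b)$ forces, via $b\join(b\to a)=1$ applied under $T$, the relation $T(b)\join T(b\to a) = 1$; combined with $b\join(b\to a)=1$ and the fact that both $b\to a$ and $T(b\to a)$ are complements of elements in $[a,1]$ sharing enough structure, the two-cube / Boolean skeleton of $[a,1]$ pins them down.

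The cleanest route, which I would actually write up, is: set $c = b\to a$, so $a\le c$, $b\join c = 1$, $b\meet c = a$. Then $T(c)\ge T(a) = a$, and applying $T$ to $b\meet c = a$ and $b\join c = 1$ gives $T(b)\meet T(c) = a$ and $T(b)\join T(c) = 1$, so $T(b)$ and $T(c)$ are complementary over $a$. Now I claim $T(c) = c$: since $a\le c\le 1$, $c$ lives in the Boolean algebra $[a,1]$, on which $T$ restricts (because $T$ fixes $a$ and $1$) to an automorphism; but any automorphism of a Boolean algebra fixing $0$ and $1$ need not be the identity, so this is exactly the point that requires the varietal identity. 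I expect the main obstacle to be precisely this: showing $T$ fixes $b\to a$, or equivalently showing directly that $T(b)=b$, will need the hypothesis that $T(a)=a$ propagated through the $\mathcal A_2\join\mathcal A_3$ identity $(x\to T(x))\join y\join T(y)=1$ — most likely by substituting $x=b$ and exploiting $T(a)=a$ to collapse $b\to T(b)$ against the interval $[a,1]$, forcing $b\le T(b)$ and then, by symmetry of $T$ (order two), $T(b)\le b$. So the real work is the inequality $b\le T(b)$, after which $T(b)=b$ is immediate from $T^2=\mathrm{id}$ applied to it.
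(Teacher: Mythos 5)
Your instinct at the very end --- use the varietal identity $(x\to T(x))\join y\join T(y)=1$ with $x=b$ and reduce the lemma to proving $b\le T(b)$ --- is the right one, but you never actually carry out the computation, and your earlier attempts (``the two-cube / Boolean skeleton of $[a,1]$ pins them down,'' and the claim $T(c)=c$) are abandoned by you yourself once you observe that an automorphism of the Boolean interval $[a,1]$ fixing the endpoints need not be the identity. The missing step is just one line: substitute $x=b$ \emph{and} $y=a$ in the identity to get $(b\to T(b))\join a\join T(a)=1$, so $(b\to T(b))\join a=1$ since $T(a)=a$. In any implication algebra $b\to T(b)\ge T(b)\ge T(a)=a$, so the left side collapses to $b\to T(b)$; hence $b\to T(b)=1$, i.e.\ $b\le T(b)$, and then $T(b)\le T^2(b)=b$ gives $T(b)=b$. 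As written, the proposal names the target inequality but stops short of proving it --- it is a plan, not a proof.

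For comparison, the paper's own argument stays inside the $\Delta$ machinery it has already built up: from $T(a)=a$ one gets $T(b\to a)\ge a$ so $\Delta(b,a)=b\meet T(b\to a)\ge a$; combined with $\Delta(b,\Delta(b,a))=a$ and monotonicity of $\Delta(b,\cdot)$ this forces $\Delta(b,a)=a$; applying $T$ to $\Delta(b,a)=b\meet T(b\to a)$ then gives $a=T(b)\meet(b\to a)$, whence $T(b)\le(b\to a)\to a=b$, and $T^2=\mathrm{id}$ finishes. Your route, once completed, is actually shorter and appeals only to the defining equation of $\mathcal A_2\join\mathcal A_3$ rather than to the preceding $\Delta$-lemmas, so it would be a perfectly good alternative proof --- but the gap sits exactly at the point you flag as ``the real work,'' and it needs to be filled.
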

\begin{proof}
	Suppose that $T(a)=a$. Then we have 
	$T(b\to a)\geq T(a)=a$ and so 
	$\Delta(b, a)= b\meet T(b\to a)\geq a$. Thus
	$a= \Delta(b, \Delta(b, a))\geq \Delta(b, a)$ and so
	$a=\Delta(b, a)$. Now we have 
	\begin{align*}
		a=T(a)=T(\Delta(b, a)) & =T(b\meet T(b\to a))  \\
		 & =T(b)\meet (b\to a)  
	\end{align*}
	and so 
	$T(b)\le (b\to a)\to a=b$. But then we also have 
	$b=T(T(b))\le T(b)$ and so $T(b)=b$. 
\end{proof}

\begin{lem}\label{lem:seven}
	Let $b\geq a$. Then
	$$
		\Delta(b, a)=a\text{ iff }T(b\to a)=b\to a. 
	$$
\end{lem}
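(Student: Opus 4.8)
The plan is to prove the equivalence $\Delta(b,a)=a \iff T(b\to a)=b\to a$ by chasing the identities already established in lemmas \ref{lem:one}--\ref{lem:six}, using $\Delta(b,a) = b\meet T(b\to a)$ as the working definition throughout. The key observation is that $\Delta(b,a)=a$ forces $a$ to be a fixed point of a certain involution, namely the restriction of $T$ to the interval $[\Delta(b,a),b]$, and the element $b\to a$ is precisely the complement of $b$ over $a$ in that Boolean interval, so fixing $a$ and fixing $b\to a$ should be two sides of the same coin.

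First I would prove the forward direction. Assume $\Delta(b,a)=a$. Then $a = b\meet T(b\to a)$, so $T(b\to a)\geq a$. Applying $T$ and using $T^2=\mathrm{id}$ gives $b\to a = T(a)\join$ (something)\ldots more carefully: from \lemref{lem:three} we have $b\to\Delta(b,a)=T(b\to a)$, and since $\Delta(b,a)=a$ this reads $b\to a = T(b\to a)$ directly. So in fact the forward direction is essentially immediate from \lemref{lem:three}: substituting $\Delta(b,a)=a$ into $b\to\Delta(b,a)=T(b\to a)$ yields $b\to a=T(b\to a)$.

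For the converse, assume $T(b\to a)=b\to a$; I want $\Delta(b,a)=a$. Set $c = b\to a$; then $c\geq b$ (since $b\to a \geq a$ and $b\to a\geq$ \ldots actually $b\to a$ need not dominate $b$, but $b\join a = b$ and $(b\to a)\to a = b$, so in the implication algebra $c=b\to a$ satisfies $c\meet b = a$ and $c\join b=1$, i.e. $c$ is the complement of $b$ over $a$). Since $T(c)=c$, \lemref{lem:six} does not quite apply directly (it needs $a\le b$ with $T(a)=a$), but I can argue as follows: $\Delta(b,a)=b\meet T(b\to a)=b\meet T(c)=b\meet c=a$, using $T(c)=c$ in the penultimate step and $b\meet c = b\meet(b\to a)=a$ (a standard implication-algebra identity, $b\meet(b\to a)=a$ when $a\le b$) in the last. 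So the converse is also a one-line computation once we note $b\meet(b\to a)=a$.

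I expect the only genuine obstacle to be confirming the identity $b\meet(b\to a)=a$ for $a\le b$ in an implication algebra (equivalently, that $b\to a$ is the complement of $b$ in the Boolean interval $[a,1]$), but this is standard and is in fact already used implicitly in \lemref{lem:three} (where $T(b\to a)$ is called ``the complement of $b$ over $\Delta(b,a)$''). Given that, the proof is simply: $(\Rightarrow)$ substitute $\Delta(b,a)=a$ into \lemref{lem:three}; $(\Leftarrow)$ compute $\Delta(b,a)=b\meet T(b\to a)=b\meet(b\to a)=a$. I would write it as a short two-paragraph proof with no deferred calculations.
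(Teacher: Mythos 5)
Your proof is correct and takes essentially the same route as the paper: both directions hinge on the fact (from \lemref{lem:three}) that $T(b\to a)$ is the complement of $b$ over $\Delta(b,a)$, together with the implication-algebra identity $b\meet(b\to a)=a$. The paper phrases it as a uniqueness-of-complement argument and notes that $T(b\to a)\le b\to a$ already forces equality by applying $T$, whereas you substitute directly into \lemref{lem:three} for the forward direction and compute for the converse; these are the same argument presented in slightly different order.
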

\begin{proof}
	We recall that $b\join T(b\to a)=\one$ so that 
	$\Delta(b, a)=b\meet T(b\to a)$ is equal to $a$ iff 
	$T(b\to a)$ is less than the complement of $b$ over $a$, ie 
	$T(b\to a)\le b\to a$. This implies 
	$T(b\to a)= b\to a$. 
\end{proof}

\begin{lem}\label{lem:eight}
	If $a<b$ then either $\Delta(b, a)=a$ or $\Delta(b, a)$ and $a$ 
		have no lower bound.  
\end{lem}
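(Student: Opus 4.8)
The plan is to reduce the claim to the already-established \lemref{lem:seven} by analysing what happens when $\Delta(b,a)\meet a$ exists. So suppose $a<b$, write $c=b\to a$ for the complement of $b$ over $a$ inside the Boolean interval $[a,1]$, and recall from the proof of \lemref{lem:three} that $b\join T(c)=\one$ and that $\Delta(b,a)=b\meet T(c)$. The key fact I want is that $T(c)$ lies in the interval $[\,\Delta(b,a),\,\one\,]$ and that, modulo $a$, it behaves like a complement of $b$; more precisely, $a\le c\le\one$ and $a\le T(c)$ exactly when $T(c)\ge a$, and we know $T(c)\ge T(a)$ always. The dichotomy to prove is then: either $T(c)\le c$ (equivalently $T(c)=c$, giving $\Delta(b,a)=a$ by \lemref{lem:seven}), or $\Delta(b,a)\meet a$ fails to exist.

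First I would assume, for contradiction, that $\Delta(b,a)\neq a$ and that $d=\Delta(b,a)\meet a$ exists; I must derive a contradiction, in fact showing $\Delta(b,a)=a$. Since $\Delta(b,a)\le b$ and $a\le b$, any lower bound lives below $b$, so working inside the Boolean algebra $[d',1]$ for a suitable small $d'$, or more cleanly inside $[a,1]$ after joining up, is the natural move: I would use \lemref{lem:compGD}-style Boolean manipulation, but since we are only in a locally symmetric implication algebra I instead argue with $\to$ and $\join$ directly. From $d\le\Delta(b,a)=b\meet T(c)$ we get $d\le T(c)$ and $d\le a\le b$, hence $d\le b\meet T(c)$ trivially; the content is that $d\le a$ forces, via applying $T$ and using $a\le c$, that $T(d)\le T(a)\le T(c)$ and $T(d)\le T(c)\le$ something below $b$. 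The idea is to push $d$ up: set $e=d\join a=a$ is no good, so instead consider $e=\Delta(b,a)\join T(a)$ or use that in an implication algebra $d\le\Delta(b,a)$ and $d\le a$ with $a,\Delta(b,a)\le b$ means $d=\Delta(b,a)\meet a$ computed in the Boolean interval $[\,?,b\,]$ — but $b$ need not be the top of a Boolean interval containing both. The right framework: pick $u$ below $d$ is wrong; rather, note $a\vee\Delta(b,a)\le b$ and both $a,\Delta(b,a)\ge d$, so $[d,b]\cap\downarrow a$ and $[d,b]\cap\downarrow\Delta(b,a)$ meet only at $d$; but in an implication algebra the interval $[d,b]$ is Boolean, and in a Boolean algebra a meet always exists, so $d$ existing is automatic — the real constraint is elsewhere.

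Reconsidering: the obstruction I actually expect is pinning down when the meet $b\meet T(c)\;\meet\; a$ exists, because meets in the ambient locally symmetric algebra are only partial. So the cleaner route is: show $\Delta(b,a)\meet a$, if it exists, equals $(b\meet T(c))\meet a = b\meet(T(c)\meet a)$, and that $T(c)\meet a$ exists iff $T(c)\ge a$ (since $a\le b$ and one of the two sits below the other in the relevant interval — here I would invoke that in $[a,1]$, which is Boolean, $T(c)$ and $a$ have a meet only when comparable downward is false; rather, $a$ and $T(c)$ always have meet $a\meet T(c)$ in $[?,1]$... ). The honest statement: $a\le b$ and $T(c)\le\one$ with $b\join T(c)=\one$; inside the Boolean algebra $[a,\one]$ the element $b$ has complement $c$, and $T(c)\in[a,\one]$ iff $T(c)\ge a$ iff (applying $T$) $c\ge T(a)$, which holds since $c\ge a\ge$... no, $c\ge a$ and we'd need $c\ge T(a)$, not automatic. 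I therefore split on whether $T(c)\ge a$. If $T(c)\ge a$: then $T(c)\in[a,\one]$ Boolean, so $b\meet T(c)$ exists and equals the Boolean meet; $\Delta(b,a)=b\meet T(c)\ge a$ forces $\Delta(b,a)=a$ by the squeeze in \lemref{lem:four} exactly as in \lemref{lem:six}'s proof, hence by \lemref{lem:seven} $T(c)=c$. If $T(c)\not\ge a$: then I claim $\Delta(b,a)\meet a$ does not exist; a common lower bound $d$ of $\Delta(b,a)=b\meet T(c)$ and $a$ would give $d\le T(c)$ and $d\le a$, so $d\le a\meet T(c)$ — and I would show $a\meet T(c)$, were it to exist, must be $\ge$ is wrong; instead show $a$ and $T(c)$ have \emph{no} lower bound unless $T(c)\ge a$, by transporting via $T$ to $T(a)$ and $c$ and using that $c=b\to a$ is the complement of $b$ so $c\meet x$ for $x\le b$ behaves rigidly, together with corollary~\ref{cor:one} and \lemref{lem:seven}. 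The step I expect to be the genuine obstacle is precisely this last one: proving non-existence of the lower bound, since it is the only place the structure-specific partiality of meets bites, and it will need the identity $x\join T(x)=1$ of $\mathcal A_{3}$ combined with the interplay $b\join T(c)=\one$ established in \lemref{lem:three} to rule out any spurious common lower bound.
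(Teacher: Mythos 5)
Your proposal never closes: you yourself flag that the crux — ruling out a common lower bound of $a$ and $\Delta(b,a)$ when $T(b\to a)\neq b\to a$ — is the step you cannot supply, and the route you gesture toward ("the identity $x\join T(x)=1$ of $\mathcal A_3$") is not available here, since the algebra is only assumed to lie in $\mathcal A_2\join\mathcal A_3$, whose equational basis is the weaker $(x\to T(x))\join y\join T(y)=\one$. The case split on whether $T(b\to a)\ge a$, and the detour through which Boolean interval contains what, are also unnecessary and lead you into the false tangent about $[d,b]$ being Boolean (which would only show a meet in $[d,b]$ exists, not address what the global meet is or forces).

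The missing idea is a single construction. Suppose $p\le a$ and $p\le\Delta(b,a)$. From $\Delta(b,a)\le T(b\to a)$ you get $p\le T(b\to a)$, and from $p\le a$ you get $T(p)\le T(a)\le T(b\to a)$; hence $p\join T(p)\le T(b\to a)$. But $p\join T(p)$ is visibly a fixed point of $T$ (apply $T$ and use $T^2=\mathrm{id}$), so \lemref{lem:six} — fixed points propagate upward — gives $T(b\to a)=b\to a$, and then \lemref{lem:seven} gives $\Delta(b,a)=a$. No case split, no appeal to $\mathcal A_3$, and the only structural fact used beyond the $\Delta$ lemmas you already cite is the upward propagation of $T$-fixed points. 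That construction of a fixed point from an arbitrary element, $p\mapsto p\join T(p)$, is the tool your draft is searching for and never finds.
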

\begin{proof}
	Suppose that $p\le a$ and $p\le\Delta(b, a)$. 
	Then $p\le \Delta(b, a)\le T(b\to a)$ and 
	$T(p)\le T(a)\le T(b\to a)$. Thus 
	$$
		p\join T(p)\le T(b\to a). 
	$$
	But $T(p\join T(p))=T(p)\join p$ and so (by 
	\lemref{lem:six}) we have 
	$T(b\to a)=b\to a$ and hence $\Delta(b, a)=a$. 
\end{proof}

\begin{lem}\label{lem:nine}
	Let $a\le b\le c$. Then 
	$$
		c\to\Delta(b, a)=(c\to b)\meet T(b\to a). 
	$$
\end{lem}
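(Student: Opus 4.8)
The plan is to compute $c\to\Delta(b,a)$ directly using the implication-algebra identities together with \lemref{lem:three}, which already tells us that $b\to\Delta(b,a)=T(b\to a)$. The key observation is that in an implication algebra, for $a\le b\le c$ one has the standard ``patching'' identity
$$
c\to x \;=\; (c\to b)\meet (b\to x)
$$
whenever $x\le b$; this holds because $c\to b$ and $b\to x$ both lie above $x$ in the Boolean algebra $[x,\one]$ and the implication algebra is locally Boolean, so $c\to x$ — being the complement of $c$ over $x$ inside $[x,c]$ — is recovered as this meet. Applying this with $x=\Delta(b,a)$ (legitimate since $\Delta(b,a)\le b$ by \lemref{lem:two}) gives immediately
$$
c\to\Delta(b,a)=(c\to b)\meet\bigl(b\to\Delta(b,a)\bigr)=(c\to b)\meet T(b\to a),
$$
using \lemref{lem:three} for the last equality. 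So the whole lemma reduces to justifying that one patching identity in the ambient implication algebra.

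First I would record the patching identity as the crux: in any implication algebra, if $x\le b\le c$ then $c\to x = (c\to b)\meet(b\to x)$, and this meet does exist. The cleanest route is to pass to the Boolean algebra $[x,\one]$ (recall every interval $[x,\one]$ in an implication algebra is Boolean, with complement $z\mapsto z\to x$). There $c\to x$ is the complement of $c$, $b\to x$ is the complement of $b$, and $c\to b$ is the relative complement of $c$ in $[b,\one]$, which in $[x,\one]$ equals $\comp c\join b$. Hence $(c\to b)\meet(b\to x)=(\comp c\join b)\meet\comp b=\comp c\meet\comp b=\comp c$ (since $\comp c\le\comp b$ as $b\le c$), which is exactly $c\to x$. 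This is a routine Boolean computation once one is inside the right interval.

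Then I would assemble the two ingredients: \lemref{lem:two} gives $\Delta(b,a)\le b\le c$ so the patching identity applies with $x=\Delta(b,a)$; \lemref{lem:three} rewrites $b\to\Delta(b,a)$ as $T(b\to a)$; substitution yields the claim. The main obstacle is purely bookkeeping — making sure the patching identity is stated and used with the correct chain $\Delta(b,a)\le b\le c$ rather than $a\le b\le c$, and confirming that the relevant meet exists (it does, automatically, since it is a meet inside a Boolean interval). No appeal to the symmetry $T$ beyond \lemref{lem:three} is needed, and no existence assumption from the definition of locally symmetric implication algebra is invoked here, so the proof is short.
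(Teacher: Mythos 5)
Your proof is correct and works, but it is organized differently from the paper's. The paper verifies directly that $(c\to b)\meet T(b\to a)$ has the three characterizing properties of the relative complement of $c$ over $\Delta(b,a)$: it lies above $\Delta(b,a)$, meets $c$ to $\Delta(b,a)$, and joins $c$ to $\one$ (the last step working inside the Boolean interval $[\Delta(b,a),\one]$ and using the fact, implicit in the proof of \lemref{lem:three}, that $c\join T(c\to a)=\one$). You instead extract a self-contained implication-algebra fact --- the ``patching'' identity $c\to x=(c\to b)\meet(b\to x)$ for $x\le b\le c$, provable by a complement computation inside the Boolean interval $[x,\one]$ --- and then obtain the lemma in one line by substituting $x=\Delta(b,a)$ (legitimate by \lemref{lem:two}) and invoking \lemref{lem:three}. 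The content is the same Boolean calculation either way, but your version is more modular: the patching identity makes no reference to $T$ or $\Delta$, so the symmetric structure enters your argument only through the single citation of \lemref{lem:three}, whereas the paper's verification interleaves the two. Your derivation of the patching identity is sound (the meet exists because both operands live in the Boolean algebra $[x,\one]$, and $\comp c\le\comp b$ since $b\le c$), so there is no gap.
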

\begin{proof}
	\begin{enumerate}[(a)]
		\item $(c\to b)\meet T(b\to a) \geq b\meet T(b\to a)= 
		\Delta(b, a)$. 
	
		\item 
		\begin{align*}
			c\meet[(c\to b)\meet T(b\to a)] & =[c\meet (c\to b)]\meet T(
			b\to a)  \\
			 & =b\meet T(b\to a)\\
			 &=\Delta(b, a). 
		\end{align*}
	
		\item Working in the distributive lattice $[\Delta(b, a), 
		\one]$ we have 
		\begin{align*}
			c\join[(c\to b)\meet T(b\to a)] & (c\join(c\to b))\meet(c\join 
			T(b\to a))  \\
			 & =\one\meet (c\join T(b\to a))  \\
			 & \geq c\join T(c\to a)  \\
			 & =\one. 
		\end{align*}
	\end{enumerate}
	Thus $c\to\Delta(b, a)$ which is the complement of $c$ over 
	$\Delta(b, a)$ is equal to \\
	$(c\to b)\meet T(b\to a)$. 
\end{proof}

\begin{lem}\label{lem:ten}
	Let $a\le b\le c$. Then 
	$$
		\Delta(c, b)\to\Delta(c, a)=T(b\to a). 
	$$
\end{lem}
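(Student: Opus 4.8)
The plan is to compute $\Delta(c,b)\to\Delta(c,a)$ using the complement characterization, exactly as in the proof of \lemref{lem:ten}'s neighbors. First I would recall from \lemref{lem:nine} that $c\to\Delta(b,a)=(c\to b)\meet T(b\to a)$, and apply \lemref{lem:three} in the interval below $c$ to get $c\to\Delta(c,a)=T(c\to a)$ and $c\to\Delta(c,b)=T(c\to b)$. Since $a\le b\le c$ gives $\Delta(c,a)\le\Delta(c,b)\le c$ by \lemref{lem:five} and \lemref{lem:two}, the element $\Delta(c,b)\to\Delta(c,a)$ is the complement of $\Delta(c,b)$ over $\Delta(c,a)$ inside the Boolean interval $[\Delta(c,a),\one]$, so it suffices to show that $T(b\to a)$ is that complement, i.e. that $\Delta(c,b)\meet T(b\to a)=\Delta(c,a)$ and $\Delta(c,b)\join T(b\to a)=\one$.

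For the meet: I expect $\Delta(c,b)\meet T(b\to a)=c\meet T(c\to b)\meet T(b\to a)$, and since $T$ is a lattice automorphism this is $c\meet T((c\to b)\meet(b\to a))$. Now $(c\to b)\meet(b\to a)$ should simplify — working in the distributive interval $[a,\one]$, we have $c\to b=(c\to a)\join b$ while $b\to a$ is the complement of $b$ over $a$ there, and their meet is $c\to a$ (this is a routine implication-algebra identity: $((c\to a)\join b)\meet(b\to a)=(c\to a)\meet(b\to a)\join(b\meet(b\to a))=(c\to a)\meet(b\to a)\join a=c\to a$, using $c\to a\le b\to a$). Hence $\Delta(c,b)\meet T(b\to a)=c\meet T(c\to a)=\Delta(c,a)$.

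For the join, I would work in the distributive lattice $[\Delta(c,a),\one]$ (which contains $\Delta(c,b)$ and, since $T(b\to a)\geq T(a)\geq$ the relevant bound, also contains $T(b\to a)$): using weak distributivity and $\Delta(c,b)=c\meet T(c\to b)$, compute $\Delta(c,b)\join T(b\to a)=(c\join T(b\to a))\meet(T(c\to b)\join T(b\to a))$. The second factor is $T((c\to b)\meet(b\to a))$'s join-dual, $=T((c\to b)\join(b\to a))$... actually more simply $T(c\to b)\join T(b\to a)=T((c\to b)\join(b\to a))=T(\one)=\one$ since $(c\to b)\join(b\to a)=\one$ in the implication algebra. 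And $c\join T(b\to a)\geq c\join T(c\to a)=\one$ by \lemref{lem:nine}-style reasoning (as in \lemref{lem:nine}(c)). So the join is $\one$, completing the identification.

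The main obstacle is bookkeeping about which distributive interval each element lives in so that the lattice manipulations are legitimate — in particular checking that $T(b\to a)$ and $\Delta(c,b)$ both lie above $\Delta(c,a)$, and that $(c\to b)\meet(b\to a)=c\to a$ holds in the appropriate interval; everything else is a direct application of \lemref{lem:three}, \lemref{lem:nine}, the fact that $T$ is a lattice automorphism, and the defining property $b\join T(b\to a)=\one$.
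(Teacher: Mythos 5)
Your argument is essentially the paper's: show that $T(b\to a)$ is the relative complement of $\Delta(c,b)$ over $\Delta(c,a)$ by checking a lower bound, the meet (via $T((c\to b)\meet(b\to a))=T(c\to a)$), and the join (via weak distributivity into the same two factors $T(b\to a)\join T(c\to b)$ and $T(b\to a)\join c$). One small correction to your bookkeeping: the intermediate bound $T(a)\geq\Delta(c,a)$ is false in general (applying $T$ it would say $a\geq T(c)\meet(c\to a)$, which already fails in small multicubes); the correct chain, and the one the paper uses, is $T(b\to a)\geq T(c\to a)\geq c\meet T(c\to a)=\Delta(c,a)$, using that $b\le c$ gives $b\to a\geq c\to a$.
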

\begin{proof}
	\begin{enumerate}[(a)]
		\item $T(b\to a)\geq T(c\to a)\geq \Delta(c, a)$. 
	
		\item 
		\begin{align*}
			T(b\to a)\meet\Delta(c, b) & =T(b\to a)\meet T(c\to 
			b)\meet c  \\
			 & =T((b\to a)\meet(c\to b))\meet c  \\
			 & =T(c\to a)\meet c  \\
			 & =\Delta(c, a). 
		\end{align*}
	
		\item \begin{align*}
			T(b\to a)\join\Delta(c, b) & =T(b\to a)\join[ T(c\to 
						b)\meet c]  \\
			 & =[T(b\to a)\join T(c\to b)]\meet[T(b\to a)\join c]\\
			 &\geq [T(b\to a)\join T(b)]\meet [T(c\to a)\join c]\\
			 &= T((b\to a)\join b)\meet \one\\
			 &=\one. 
		\end{align*}
	\end{enumerate}
	Thus $\Delta(c, b)\to\Delta(c, a)$ which is the complement of 
	$\Delta(c, b)$ over 
	$\Delta(c, a)$ is equal to $T(b\to a)$. 
\end{proof}

\begin{rem}\label{rem:one}
	Note that the lemma shows us that the complement $\Delta(c, 
	b)\to\Delta(c, a)$ is actually independent of $c$. 
\end{rem}

\begin{lem}\label{lem:eleven}
	Let $a\le b\le c$. Then 
	$$
		\Delta(\Delta(c, b), \Delta(c, a))=\Delta(c, 
		\Delta(b, a)). 
	$$
\end{lem}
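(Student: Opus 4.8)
The statement $\Delta(\Delta(c,b),\Delta(c,a))=\Delta(c,\Delta(b,a))$ is the associativity-type axiom (e) for a Delta-operator, and the natural route is to compare both sides with the complementation structure of the interval $[\Delta(b,a),\one]$ already analysed in Lemmas~\ref{lem:nine}, \ref{lem:ten} and Remark~\ref{rem:one}. The strategy is: identify a common element $c'$ lying above both sides, compute the complement of $c'$ over each side, and show these complements coincide; since complements in a Boolean interval are unique, the two sides are equal. The obvious candidate for $c'$ is $\Delta(c,b)$, which dominates the left-hand side by axiom (a)/Lemma~\ref{lem:two} and dominates the right-hand side because $\Delta(b,a)\le b\le c$ gives $\Delta(c,\Delta(b,a))\le\Delta(c,b)$ by Lemma~\ref{lem:five}.

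First I would record that $\Delta(b,a)\le\Delta(c,b)$: indeed $\Delta(b,a)\le b$ and, applying Lemma~\ref{lem:five} to $a\le b\le c$ together with Lemma~\ref{lem:four}-style manipulations, one sees $\Delta(b,a)\le\Delta(c,b)$ after checking $\Delta(b,a)\le c$ and $\Delta(b,a)\le\Delta(c,b)$ via $\Delta(c,\Delta(b,a))\le\Delta(c,b)$ and $\Delta(c,-)$ being an involution on $[\cdot,c]$. So both $\Delta(\Delta(c,b),\Delta(c,a))$ and $\Delta(c,\Delta(b,a))$ sit inside the Boolean interval $[\Delta(b,a),\one]$, and $\Delta(c,b)$ is their common upper bound. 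Next I would compute the complement of $\Delta(c,b)$ over each side. For the left-hand side, Lemma~\ref{lem:ten} applied with the triple $\Delta(c,a)\le\Delta(c,b)$ (using the chain $a\le b\le c$) — or more directly Lemma~\ref{lem:three} with $b$ replaced by $\Delta(c,b)$ and $a$ by $\Delta(c,a)$ — gives that $\Delta(c,b)\to\Delta(\Delta(c,b),\Delta(c,a))=T(\Delta(c,b)\to\Delta(c,a))$, which by Lemma~\ref{lem:ten} equals $T(T(b\to a))=b\to a$.

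For the right-hand side, Lemma~\ref{lem:nine} with the triple $\Delta(b,a)\le b\le c$ gives $c\to\Delta(b,a)=(c\to b)\meet T(b\to a)$; I want instead $\Delta(c,b)\to\Delta(c,\Delta(b,a))$. Using Lemma~\ref{lem:three} (with $b\leftarrow c$, $a\leftarrow\Delta(b,a)$) we have $c\to\Delta(c,\Delta(b,a))=T(c\to\Delta(b,a))=T((c\to b)\meet T(b\to a))=T(c\to b)\meet(b\to a)$. Then relativising from $c$ down to $\Delta(c,b)$: since $\Delta(c,\Delta(b,a))\le\Delta(c,b)\le c$, the complement over $\Delta(c,b)$ is obtained by joining with $\Delta(c,b)$, i.e. $\Delta(c,b)\to\Delta(c,\Delta(b,a))=\Delta(c,b)\join\bigl(T(c\to b)\meet(b\to a)\bigr)$; working in the distributive interval and using $\Delta(c,b)=c\meet T(c\to b)$ together with $\Delta(c,b)\join T(c\to b)=\one$ (from Lemma~\ref{lem:three}, as $T(c\to b)=c\to\Delta(c,b)$) this simplifies to $(b\to a)\meet\bigl(\Delta(c,b)\join T(c\to b)\bigr)=b\to a$. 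Thus both sides have the same complement $b\to a$ over the common element $\Delta(c,b)$ in the Boolean interval $[\Delta(b,a),\one]$, so they are equal.

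The main obstacle is the bookkeeping in the last step: verifying $\Delta(b,a)\le\Delta(c,b)$ cleanly so that the two expressions genuinely lie in one Boolean interval, and then correctly relativising the complement computations from the level of $c$ down to the level of $\Delta(c,b)$ — each relativisation is a short distributive-lattice computation, but one must be careful that $T$ interacts with the meets as in Lemmas~\ref{lem:nine}--\ref{lem:ten} and that the identities $\Delta(c,b)=c\meet T(c\to b)$, $c\to\Delta(c,b)=T(c\to b)$ are invoked in the right places. Everything else is a direct appeal to the already-established lemmas on $\Delta$ and $T$.
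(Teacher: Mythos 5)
Your plan — compute the complement of $\Delta(c,b)$ over each side and conclude equality — is sound in outline, and in fact the correct complement-value computation on the right can be obtained in one stroke from Lemma~\ref{lem:ten} applied to the chain $\Delta(b,a)\le b\le c$: $\Delta(c,b)\to\Delta(c,\Delta(b,a))=T(b\to\Delta(b,a))=T(T(b\to a))=b\to a$. Once you know $\Delta(c,b)\to\mathrm{LHS}=\Delta(c,b)\to\mathrm{RHS}=b\to a$, you are done immediately from $x\meet(x\to z)=z$ for $z\le x$; no common Boolean interval is needed. (This route is essentially dual to the paper's, which shows directly that $\mathrm{LHS}=\Delta(c,b)\meet(b\to a)=\mathrm{RHS}$ using Lemmas~\ref{lem:ten} and \ref{lem:nine}.)

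However, as written the proposal has several false steps. First, $\Delta(b,a)\le\Delta(c,b)$ is simply not true in general, so the two sides do \emph{not} both sit in the Boolean interval $[\Delta(b,a),\one]$. For a counterexample in $\rsf I(\wp(\{1,2\}))$ take $a=[\{1,2\},\{1,2\}]$, $b=[\{2\},\{1,2\}]$, $c=\one$; then $\Delta(b,a)=[\{2\},\{2\}]$ while $\Delta(c,b)=[\emptyset,\{1\}]$, which are incomparable. Second, the ``relativization'' formula $\Delta(c,b)\to\Delta(c,\Delta(b,a))=\Delta(c,b)\join\bigl(c\to\Delta(c,\Delta(b,a))\bigr)$ is not a valid identity for passing from the complement over $c$ to the complement over $\Delta(c,b)$. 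Third, the subsidiary claims you invoke in simplifying it are also false: since $\Delta(c,b)=c\meet T(c\to b)\le T(c\to b)$, one has $\Delta(c,b)\join T(c\to b)=T(c\to b)$, not $\one$; and $\Delta(c,b)\le b\to a$ does not hold in general. So the right-hand-side complement computation would need to be replaced (e.g.\ by the direct Lemma~\ref{lem:ten} application above), the preliminary ``both sides lie in $[\Delta(b,a),\one]$'' paragraph deleted, and the uniqueness-of-complement conclusion rephrased as the meet identity $x\meet(x\to z)=z$.
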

\begin{proof}
	\begin{align*}
		\Delta(\Delta(c, b), \Delta(c, a)) & =
		\Delta(c, b)\meet T(\Delta(c, b)\to\Delta(c, a))\\
		 & =\Delta(c, b)\meet T(T(b\to a))  \\
		 & =\Delta(c, b)\meet (b\to a)  \\
		 & =c\meet (b\to a)\meet T(c\to b) \\
		 & = c\meet T((c\to b)\meet T(b\to a))\\
		 & = c\meet T(c\to\Delta(b, a))\\
		 &= \Delta(c, \Delta(b, a)). 
	 \end{align*}
\end{proof}

The next step is to show that Delta operators create cubic implication algebras. 
A major part of this proof is analyzing the families of fixed points 
associated with a Delta-operator.

\section{Fixed Points}
Now we turn to a general analysis of the fixed points of a 
Delta-operator. 
Let $\Delta$ be a fixed Delta-operator on a multicubic implication algebra $\mathcal M$. 

We begin by developing several properties of the class of fixed 
points and a variety of characterizations of fixed points. 

\begin{lem}
	Let $ a\le b\le c$. Then 
	\begin{enumerate}[(a)]
		\item  $b\to \Delta(b, a)=\Delta(\one, b\to a)$;
		
		\item  $\D(c, {\C(a, b)\meet  c})= c\meet\D(\one, {\C(a, b)})$;
	
		\item  $\Delta( b,  a)= b\meet \Delta( c, {\C(a, 
		b)\meet  c})$;
			
		\item  if $\Delta( c,  b)= b$ then $\Delta( c, 
		 a)=\Delta( b,  a)$; 
		
		\item  if $\Delta( c,  b)= b$ and 
		$\D(b, a)= a$ then $\Delta( c, 
		 a)= a$. 
	\end{enumerate}
\end{lem}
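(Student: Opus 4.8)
The plan is to get (a) straight from the lemmas of the previous section, to reduce (b) to a single identity about relative complements, to read (c) off from (b), and to deduce (d) and (e) from \lemref{lem:ten} and \lemref{lem:three}. For (a): by \lemref{lem:three} we have $b\to\D(b,a)=T(b\to a)$, and by \lemref{lem:one} we have $T(b\to a)=\D(\one,b\to a)$, so the two sides of (a) are the same expression.

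For (b), note that $\C(a,b)=b\to a\ge a$ while $a\le c$, so $\C(a,b)\meet c$ exists, and as it lies below $c$ I can apply \corref{cor:one} to get $\D(c, {\C(a,b)\meet c})=c\meet\D(\one, {c\to(\C(a,b)\meet c)})$. Everything then reduces to the implication-algebra identity $c\to(\C(a,b)\meet c)=\C(a,b)$. I would prove this by checking that $\C(a,b)$ is exactly the complement of $c$ over $\C(a,b)\meet c$ in the interval $[\C(a,b)\meet c,\one]$ (a Boolean algebra): it dominates $\C(a,b)\meet c$, its meet with $c$ is $\C(a,b)\meet c$ by construction, and its join with $c$ is $\one$ because $c\join\C(a,b)\ge b\join(b\to a)=\one$, which is where $b\le c$ is used. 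Substituting back gives $\D(c, {\C(a,b)\meet c})=c\meet\D(\one, {\C(a,b)})$, which is (b).

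Part (c) then falls out by meeting the identity of (b) with $b$: since $b\le c$, $b\meet\D(c, {\C(a,b)\meet c})=b\meet\D(\one, {\C(a,b)})=b\meet\D(\one,b\to a)$, and the last expression is $\D(b,a)$ by \corref{cor:one}. For (d), apply \lemref{lem:ten} to $a\le b\le c$ to get $\D(c,b)\to\D(c,a)=T(b\to a)$; the hypothesis $\D(c,b)=b$ rewrites the left side as $b\to\D(c,a)$, which by \lemref{lem:three} equals $b\to\D(b,a)$. By \lemref{lem:five} and the hypothesis, $\D(c,a)\le\D(c,b)=b$, and $\D(b,a)\le b$ by \lemref{lem:two}; meeting the equation $b\to\D(c,a)=b\to\D(b,a)$ with $b$ therefore gives $\D(c,a)=\D(b,a)$, since $b\meet(b\to x)=x$ whenever $x\le b$. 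Finally (e) is immediate, because under its extra hypothesis (d) already yields $\D(c,a)=\D(b,a)=a$.

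The only step here that is more than bookkeeping is (b), namely the identity $c\to(\C(a,b)\meet c)=\C(a,b)$; once that is checked the rest is just assembling \corref{cor:one}, \lemref{lem:three}, \lemref{lem:five}, \lemref{lem:two} and \lemref{lem:ten}. It is worth recording in passing that these lemmas, established for the operator of \defref{def:deltaX}, are available in the present setting, since an abstract Delta-operator puts us in exactly that setting with $T=\D(\one,\cdot)$.
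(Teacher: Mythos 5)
Your treatment of parts (b), (c), and (e) is in substance the paper's: (b) by identifying $\C(a,b)$ as the complement of $c$ over $\C(a,b)\meet c$ and applying the axiom $\D(x,y)=x\meet\D(\one,x\to y)$, (c) by meeting the result with $b$, and (e) trivially from (d). For (a) and (d), however, you take a genuinely different route, appealing to \lemref{lem:one}, \lemref{lem:three} and \lemref{lem:ten}, which belong to the section on the $T$-defined $\Delta$ of \defref{def:deltaX}. The paper instead works here entirely inside \defref{def:delta}: its (a) consists of the two facts $b\meet\D(\one,b\to a)=\D(b,a)$ (axiom (g)) and $b\join\D(\one,b\to a)=\one$, which together identify $\D(\one,b\to a)$ as $b\to\D(b,a)$; and its (d) is a longer but self-contained computation using (b), (c), monotonicity (axiom (d)), involutivity (axiom (c)) and the cancellation law (axiom (e)), with $T$ nowhere in sight. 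Your argument via \lemref{lem:ten} for (d) is tidier when it is available.

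The concern is the closing remark, which carries real weight. From axioms (c) and (d) one does get that $T=\D(\one,\cdot)$ is an order-automorphism (hence an implication automorphism) of period at most two, and axiom (g) gives local symmetry; but to invoke \lemref{lem:three} and \lemref{lem:ten} you also need $(\mathcal M,T)$ to lie in $\mathcal A_2\join\mathcal A_3$, i.e.\ the identity $(x\to T(x))\join y\join T(y)=\one$. That identity is precisely what drives the proof of \lemref{lem:three}, and \lemref{lem:three} is in turn used inside the proof of \lemref{lem:ten} (the step $T(c\to a)\join c=\one$). Since \lemref{lem:three} is, via \lemref{lem:one}, nothing but part (a) of the present lemma, your argument for (a) is close to circular as written: the variety identity you would need to justify the citation is not among the Delta-operator axioms and is not established in the paper for an abstract Delta-operator. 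The fact that the paper re-derives (a) and (d) from the axioms rather than citing the Section 3 lemmas is a signal that the authors did not take that transfer for granted. To make your version rigorous you would need to first prove, from \defref{def:delta} alone, that $T$ satisfies the $\mathcal A_2\join\mathcal A_3$ identity. (It is fair to add that the paper's own proof of (a) simply asserts the join identity without derivation, so there is a terse spot there as well; but it is not disguised as a citation to a lemma proved under stronger hypotheses.)
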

\begin{proof}
	\begin{enumerate}[(a)]
		\item  
		\begin{align*}
			b\meet\Delta(\one, b\to a)&=\Delta(b, a)\\
			b\meet\Delta(\one, b\to a)&=\one. 
		\end{align*}
	
	
		\item  We note that 
		\begin{align*}
		\C({\relax[\C(a,b)\meet  c]}, c)&=(\C(a, b)\meet  c)\join\C(a, c)\\
		&=(\C(a, b)\join\C(a, c))\meet( c\join\C(a, c))\qquad\text{ in 
		}[ a,  1]\\
		&=\C(a, b). \\
		\text{Thus }\qquad \D(c, {\C(a, b)\meet  c})&=
		 c\meet\D(1, {\C({[{\C(a, b)}\meet c]}, c)})\\
		&= c\meet\D(1, {\C(a, b)}). 
		\end{align*}
	
		\item  
		\begin{align*}
			\D(b, a)&= b\meet\D(1, {\C(a, b)})\\
			&=( b\meet c)\meet\D(1, {\C(a, b)})\\
			&= b\meet\D(c, {\C(a, b)\meet c}). 
		\end{align*}
		
		\item  
		\begin{align*}
			 b=\D(c, b)&\geq\D(c, a)\\
			\D(c, {\C(a, b)\meet c})&\geq\D(c, a)\\
			\text{ Hence }\D(b, a)&= b\meet\D(c,{\relax\C(a, b)\meet c})\\
			&\geq \D(c, a)\\
			\text{ and so }\D(c, {\D(b, a)})&\geq\D(c, {\D(c, a)})= a. \\
			\text{ Also }\D(c, {\D(b, a)})=\D( {\relax\D(c, b)}, {\D(c, a)})&=\D(b, {\D(c,a)})\geq  a\\
			\text{ implies }\D(c, a)&\geq\D(b, a)
		\end{align*}
		which gives the result. 
		
		\item  This is immediate as $\D(c, a)=\D(b, a)= a$. 
	\end{enumerate}
\end{proof}

The converse to the last result is also true but requires a bit more 
work to establish. 

\begin{lem}
	If $ b\geq  a$ then 
	$$
	\D(b, a)= a\iff \D(b, a)\le a 
	$$
	and 
	$$
	\D(b, a)= a\iff \D(b, a)\geq a 
	$$
\end{lem}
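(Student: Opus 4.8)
The plan is to prove the two biconditionals by establishing the forward implications directly and then getting the reverse implications via the antisymmetry of the order together with the involutive nature of $\Delta(b,-)$. The forward direction of the first statement, $\D(b,a)=a \Rightarrow \D(b,a)\le a$, is trivial. The forward direction of the second, $\D(b,a)=a \Rightarrow \D(b,a)\ge a$, is equally trivial. So the content is entirely in the two converses: assuming $\D(b,a)\le a$ (resp. $\D(b,a)\ge a$), deduce $\D(b,a)=a$. Since these two converses are what matter, I would state the lemma's proof as handling those.

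For the first converse, suppose $\D(b,a)\le a$. Apply $\D(b,-)$ to both sides and use monotonicity (\lemref{lem:five}, with the chain $\D(b,a)\le a\le b$) to get $\D(b,\D(b,a))\ge \D(b,a)$ — wait, monotonicity gives $\D(b,\D(b,a))\le\D(b,a)$ is the wrong direction; rather, from $\D(b,a)\le a\le b$ and \lemref{lem:five} we get $\D(b,\D(b,a))\le\D(b,a)$, and by \lemref{lem:four} the left side is just $a$. Hence $a\le\D(b,a)$. Combined with the hypothesis $\D(b,a)\le a$, antisymmetry yields $\D(b,a)=a$. For the second converse, suppose $\D(b,a)\ge a$. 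Then $\D(b,a)$ sits between $a$ and $b$, so by \lemref{lem:five} applied to $a\le\D(b,a)\le b$ we get $\D(b,\D(b,a))\le\D(b,\D(b,a))$ — more usefully, $\D(b,a)=\D(b,\D(b,\D(b,a)))$ and $\D(b,\D(b,a))=a$ by \lemref{lem:four}, so monotonicity on $a\le\D(b,a)$ gives $\D(b,a)=\D(b,\D(b,\D(b,a)))\ge\D(b,\D(b,a))$ is again backwards. The clean route is: $a\le\D(b,a)\le b$ implies, by \lemref{lem:five}, $\D(b,a)=\D(b,a)\ge\D(b,\D(b,a))=a$ — no. Let me instead use \lemref{lem:four} the other way: from $a\le\D(b,a)$ and $\D(b,-)$ order-reversing on $[a,b]$ (which is what \lemref{lem:four} plus \lemref{lem:five} give: if $x\le y$ in $[a,b]$ then $\D(b,x)=\D(b,\D(b,\D(b,x)))\ge\D(b,\D(b,y))$ forces nothing, but applying \lemref{lem:five} to $\D(b,y)\le\D(b,x)$... ). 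The honest statement is that $\D(b,-)$ is an order-\emph{antiautomorphism} of $[a,b]$ onto $[\D(b,a),b]$; \lemref{lem:five} gives monotonicity and \lemref{lem:four} gives that it is an involution, and an involutive monotone self-map that also has monotone inverse must be... the resolution is that \lemref{lem:five} as stated ($a\le b\le c \Rightarrow \D(c,a)\le\D(c,b)$) together with \lemref{lem:four} actually forces $\D(b,-)$ to reverse order on the interval it acts on. I would first record this as a one-line observation, then both converses follow: $\D(b,a)\le a$ gives $a=\D(b,\D(b,a))\ge\D(b,a)$ hence equality, and $\D(b,a)\ge a$ gives $\D(b,a)\ge a$ and $a=\D(b,\D(b,a))\ge\D(b,a)$...

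The subtle point — and the one I expect to be the main obstacle — is pinning down the exact order behaviour of $y\mapsto\D(b,y)$ on $[a,b]$ cleanly, because \lemref{lem:five} is phrased only for the case where the \emph{outer} argument is fixed and the inner arguments both lie below a common third element. The trick is that whenever $a\le\D(b,a)$ (which is exactly the extra hypothesis in each converse), the relevant arguments do form a chain below $b$, so \lemref{lem:five} applies and, via \lemref{lem:four}, forces the equality. So concretely: (1) note the trivial forward directions; (2) for the converse of the first, from $\D(b,a)\le a\le b$ apply \lemref{lem:five} to get $\D(b,\D(b,a))\le\D(b,a)$, i.e. $a\le\D(b,a)$ by \lemref{lem:four}, then antisymmetry; (3) for the converse of the second, from $a\le\D(b,a)\le b$ apply \lemref{lem:five} to get $\D(b,\D(b,a))\ge\D(b,a)$ — here I must check the direction carefully: \lemref{lem:five} with the chain $a\le\D(b,a)\le b$ and outer argument $b$ gives $\D(b,a)\le\D(b,\D(b,a))=a$, and combined with $a\le\D(b,a)$ this gives equality. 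Thus in both cases the extra one-sided inequality upgrades, through one application each of \lemref{lem:five} and \lemref{lem:four}, to a two-sided one, and antisymmetry of $\le$ finishes it.
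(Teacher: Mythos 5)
Your final steps (1)--(3) are exactly the paper's proof: apply monotonicity of the inner argument (axiom (d) of \defref{def:delta}, established for the $T$-induced $\Delta$ in \lemref{lem:five}) to the chain $\Delta(b,a)\le a\le b$ (resp.\ $a\le\Delta(b,a)\le b$), use the involution $\Delta(b,\Delta(b,a))=a$ (axiom (c), \lemref{lem:four}) to rewrite the left side, and finish by antisymmetry. Although the write-up is circuitous with several abandoned false starts, the argument you ultimately land on is correct and identical in substance to the paper's two-line proof.
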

\begin{proof}
	The right to left implication is the only nontrivial one. 
	
	If $\D(b, a)\le a$ then $ a=\D(b, {\D(b, a)})\le\D(b, a)$. 
	
	If $\D(b, a)\geq a$ then $ a=\D(b, {\D(b, a)})\geq\D(b, a)$.
\end{proof}

\begin{lem}
	If $ c\geq b\geq  a$ then 
	$$
	\D(b, a)= a\iff \D(1, {\C(a, b)})=\C(a, b),  
	$$
	and 
	$$
	\D(b, a)= a\iff \D(c, {\C(a, b)})=\C(a, b)\meet  c.   
	$$
\end{lem}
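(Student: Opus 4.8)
The plan is to prove both biconditionals by relating $\D(b,a)=a$ to a fixed-point condition on complements, using the machinery already established, in particular \corref{cor:one} (that $\Delta(b,a)=b\meet\Delta(\one,b\to a)$), \lemref{lem:seven} (that $\Delta(b,a)=a$ iff $T(b\to a)=b\to a$), and the identity $b\to\Delta(b,a)=\Delta(\one,b\to a)$ from part (a) of the preceding lemma, together with the observation that $\C(a,b)$ and $b\to a$ are two notations for the same thing: the complement of $b$ over $a$ in the Boolean interval $[a,\one]$.

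First I would handle the first biconditional. Since $b\to a$ is precisely $\C(a,b)$, \lemref{lem:seven} tells us $\D(b,a)=a$ iff $T(\C(a,b))=\C(a,b)$, and by \lemref{lem:one} $T(\C(a,b))=\Delta(\one,\C(a,b))$. So the first statement is essentially a direct translation of \lemref{lem:seven}. I would spell this out: $\D(b,a)=a \iff T(b\to a)=b\to a \iff \Delta(\one,\C(a,b))=\C(a,b)$, citing \lemref{lem:seven} and \lemref{lem:one}.

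For the second biconditional, the forward direction: if $\D(b,a)=a$ then by the first part $\D(\one,\C(a,b))=\C(a,b)$, and then by part (b) of the preceding lemma, $\D(c,\C(a,b)\meet c)=c\meet\D(\one,\C(a,b))=c\meet\C(a,b)$. But $\C(a,b)\meet c=\C(a,b)$ since $\C(a,b)\geq a$ and $a\le b\le c$ gives $\C(a,b)\geq \C(b,c)\cdots$—more carefully, $\C(a,b)\le\one$ and we need $\C(a,b)\le c$; this follows because $b\le c$ implies $\C(a,c)\le\C(a,b)$ and... actually the cleanest route is $\C(a,b)\meet c=\C(a,b)$ iff $\C(a,b)\le c$, and since $a\le\C(a,b)\le b\le c$ by \lemref{lem:compGD}(a), we have $\C(a,b)\le b\le c$. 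So $\D(c,\C(a,b))=\D(c,\C(a,b)\meet c)=c\meet\C(a,b)$. For the reverse direction, if $\D(c,\C(a,b))=\C(a,b)\meet c=\C(a,b)$, then $\D(c,\C(a,b))\geq\C(a,b)$... wait, $\D(c,\C(a,b))=\C(a,b)$ and we want to run this back. Apply part (c) of the preceding lemma with the triple $a\le b\le c$ in the roles needed: $\Delta(b,a)=b\meet\Delta(c,\C(a,b)\meet c)=b\meet(\C(a,b)\meet c)=b\meet\C(a,b)=a$ by \lemref{lem:compGD}(b).

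The main obstacle I anticipate is bookkeeping: keeping straight that $\C(a,b)$, $b\to a$, and "the complement of $b$ over $a$" are all the same element, and correctly invoking parts (b) and (c) of the immediately preceding lemma (which are stated for the configuration $a\le b\le c$ with $\C(a,b)$) in exactly the form needed. There is no real analytic difficulty—everything is a substitution into already-proven identities—so the write-up should be short. I would structure it as: (1) reduce the first biconditional to \lemref{lem:seven} via the identification $b\to a=\C(a,b)$ and \lemref{lem:one}; (2) for the second, note $\C(a,b)\le b\le c$ so $\C(a,b)\meet c=\C(a,b)$; (3) forward direction via part (b) of the preceding lemma; (4) reverse direction via part (c) of the preceding lemma and \lemref{lem:compGD}(b).
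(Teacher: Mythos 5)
Your proof is correct and uses the same underlying machinery as the paper's, but you reverse the dependency between the two biconditionals. The paper proves the second biconditional directly in a single chain of equivalences: by part (c) of the immediately preceding lemma, $\D(b,a)=a$ iff $b\meet\D(c,\C(a,b)\meet c)=a$, which (using that $b$ and $\C(a,b)\meet c$ are complements in the Boolean interval $[a,c]$) is iff $\D(c,\C(a,b)\meet c)\le\C(a,b)\meet c$, which by the earlier lemma on $\D(x,y)=y\iff\D(x,y)\le y$ is iff $\D(c,\C(a,b)\meet c)=\C(a,b)\meet c$; the first biconditional is then the special case $c=\one$. You instead establish the first biconditional independently and use it to get the forward direction of the second via part (b) of the preceding lemma, with the reverse via part (c). Both orderings work, and your observation that $\C(a,b)\le b\le c$ so $\C(a,b)\meet c=\C(a,b)$ is exactly right and is the same simplification the paper uses implicitly.

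One thing to flag: you prove the first biconditional by citing \lemref{lem:seven} and \lemref{lem:one}, but those live in Section 3, where $\Delta$ is the specific operator $\Delta(b,a)=b\meet T(b\to a)$ derived from a symmetry $T$ on a locally symmetric algebra. The lemma you are asked to prove is in Section 4, which fixes an \emph{arbitrary} Delta-operator on a multicubic implication algebra and makes no reference to $T$. Strictly speaking, \lemref{lem:seven} is therefore out of scope. The fix is cheap: with $\D(\one,\cdot)$ playing the role of $T$, part (a) of the preceding lemma ($b\to\D(b,a)=\D(\one,b\to a)$) gives the forward direction immediately (if $\D(b,a)=a$ then $b\to a=\D(\one,b\to a)$), and the reverse direction follows from axiom (g) of Definition~\ref{def:delta} (if $\D(\one,b\to a)=b\to a$ then $\D(b,a)=b\meet(b\to a)=a$). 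Either state this small transposition explicitly, or simply derive the first biconditional as the $c=\one$ case of the second, as the paper does.
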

\begin{proof}
The first is a special case of the second. 

Using complements in $[ a,  c]$ we have
	\begin{align*}
		\D(b, a)= a&\iff  b\meet\D(c, {\C(a, b)\meet c})= a\\
		&\iff \D(c, {\C(a, b)\meet c})\le\C(a, b)\meet c\\
		&\iff \D(c, {\C(a, b)\meet c})=\C(a, b)\meet c. 
	\end{align*}
\end{proof}

\begin{lem}
		If $ a\le b$ and $\D(1, a)= a$ then $\D(1, b)= b$. 
\end{lem}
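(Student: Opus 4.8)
The plan is to mirror the proof of \lemref{lem:six}, with the ambient involution used there replaced by the reflection $T:=\D(\one, \bullet)$ that comes packaged with the Delta-operator. First one records that $T$ behaves enough like a symmetry: \defref{def:delta}(c) with larger argument $\one$ gives $T(T(x))=x$ for all $x$, and \defref{def:delta}(d) with larger argument $\one$ gives that $T$ is order-preserving; hence $T$ is an order-automorphism of $\mathcal M$, and so preserves every existing join and meet (and therefore the whole implication-algebra structure). The hypothesis $\D(\one, a)= a$ says precisely that $T$ fixes $a$, and the goal is to deduce that $T$ fixes $b$.

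The first step is to show $\D(b, a)= a$. Since $a\le b\to a$, monotonicity of $T$ gives $T(b\to a)\geq T(a)= a$, so by \defref{def:delta}(g) together with $a\le b$,
\[
\D(b, a)= b\meet T(b\to a)\ \geq\ b\meet a\ =\ a .
\]
On the other hand $a\le\D(b, a)\le b$ by \defref{def:delta}(a), so \defref{def:delta}(d) and (c) give $\D(b, a)\le\Delta(b, \D(b, a))= a$; hence $\D(b, a)= a$. (This is just the $\D(b,a)\geq a\Rightarrow\D(b,a)=a$ half of the second lemma of this section.)

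Now apply $T$ to the equation $\D(b, a)= a$. The meet $b\meet T(b\to a)$ exists --- it equals $\D(b, a)$ --- so $T$ carries it to $T(b)\meet T(T(b\to a))$, and using $T^{2}=\mathrm{id}$ we get
\[
a\ =\ T(a)\ =\ T(\D(b, a))\ =\ T(b)\meet(b\to a) .
\]
Since every interval $[a, \one]$ of an implication algebra is a Boolean algebra, with complement $x\mapsto x\to a$ (cf. \lemref{lem:compGD}), and since $a\le T(b)$, the relation $T(b)\meet(b\to a)= a$ forces $T(b)\le(b\to a)\to a= a\join b= b$. Applying $T$ once more, $b= T^{2}(b)\le T(b)$, so $T(b)= b$, i.e. $\D(\one, b)= b$, as required.

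The one genuinely non-routine point is the very first: the present context carries no a priori symmetry, so one must extract $T=\D(\one, \bullet)$ from the Delta-operator and verify that it is an order-automorphism before the \lemref{lem:six}-style computation can be run. (Alternatively, once $\D(b, a)= a$ is established one may quote the third lemma of this section to see that $T$ also fixes $\C(a, b)$, and then use that an automorphism of the Boolean interval $[a, \one]$ commutes with complementation to read off $T(b)= b$ directly.)
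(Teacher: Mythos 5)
Your proof is correct and takes essentially the same route as the paper's: both first show $\D(b,a)=a$ from $\D(b,a)\geq a$, and then use the fact that $\D(\one,\cdot)$ preserves meets to deduce $\D(\one,b)\meet(b\to a)=a$ and hence $\D(\one,b)\le b$, whence $\D(\one,b)=b$. The one genuine improvement over the paper's write-up is that you explicitly record that $T=\D(\one,\cdot)$ is an order-automorphism (from axioms (c) and (d)) and so preserves all existing meets---a fact the paper's proof uses in the line $\D(\one,\C(a,b))\meet\D(\one,b)=\D(\one,\C(a,b)\meet b)$ without comment.
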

\begin{proof} 
	$\C(a, b)\geq  a$ implies $\D(1, {\C(a, b)})\geq\D(1, a)= a$. Thus \\
	$\D(b, a)= b\meet\D(1, {\C(a, b)})\geq  b\meet a= a$. 
	This gives $\D(b, a)= a$. 
 	
 	From this we have $\D(1,{\relax\C(a, b)})=\C(a, b)$ and so 
 	\begin{align*}
 		\C(a, b)\meet\D(1, b)&=\D(1,{\relax\C(a, b)})\meet\D(1, b)\\
 		&=\D(1,{\relax\C(a, b)\meet b})\\
 		&=\D(1, a)\\
 		&= a. 
 	\end{align*}
 	This implies $\D(1, b)\le\C(a,{\relax\C(a, b)})= b$ and so $\D(1, b)= b$. 
\end{proof}

\begin{prop}\label{prop:fixing}
		If $ a\le b\le c$ then 	\\
		$\Delta( c,  a)= a$ iff
		$\Delta( c,  b)= b$ and 
		$\D(b, a)= a$
\end{prop}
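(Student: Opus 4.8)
The plan is to prove the two directions separately, using the characterizations of fixed points established in the preceding lemmas and the $\Delta$--axioms from \defref{def:delta}. The forward direction ($\Delta(c,a)=a$ $\Rightarrow$ $\Delta(c,b)=b$ and $\Delta(b,a)=a$) should follow almost formally: given $a\le b\le c$ and $\Delta(c,a)=a$, the monotonicity axiom (d) gives $a=\Delta(c,a)\le\Delta(c,b)\le c$, and since $\Delta(c,b)\le c$ always, I would try to squeeze $\Delta(c,b)$ between $b$ and itself. More precisely, apply $\Delta(c,-)$ and use axiom (e)/(c) together with the "$\le$ suffices" lemma (the one asserting $\Delta(b,a)=a\iff\Delta(b,a)\le a$): from $\Delta(c,\Delta(c,b))=b$ and $a\le\Delta(c,b)$ one gets, applying $\Delta(c,-)$, that $\Delta(c,a)\le\Delta(c,b)$, i.e. $a\le\Delta(c,b)$; then apply $\Delta(c,-)$ again to $\Delta(c,b)$ itself... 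Actually the cleanest route is: $\Delta(c,a)=a$ together with $a\le b$ gives via the complement lemma $\Delta(\one,\C(a,b))=\C(a,b)$ is equivalent to $\Delta(b,a)=a$, and one deduces $\Delta(c,b)=b$ from the upward-closure lemma for fixed points of $\Delta(\one,-)$ (the lemma "$a\le b$ and $\Delta(1,a)=a$ implies $\Delta(1,b)=b$") after translating $\Delta(c,b)=b$ into a statement about $\Delta(\one,-)$ via part of the earlier lemma. So for the forward direction I would: first show $\Delta(b,a)=a$ by restricting attention to the interval $[a,b]$ and noting $\Delta(b,a)=b\meet\Delta(c,\C(a,b)\meet c)\le\Delta(c,a)\cdot(\text{stuff})$ using clause (d) of the big lemma with roles $a\le b\le c$; then show $\Delta(c,b)=b$ similarly, or derive it from $\Delta(c,a)=a$ and $\Delta(b,a)=a$ by a transitivity-type argument.

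For the reverse direction, assume $\Delta(c,b)=b$ and $\Delta(b,a)=a$; then clause (e) of the first lemma of the Fixed Points section ("if $\Delta(c,b)=b$ and $\Delta(b,a)=a$ then $\Delta(c,a)=a$") gives the conclusion directly. So the reverse direction is essentially already done.

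The real content is therefore the forward direction, and within it the main obstacle is showing $\Delta(c,b)=b$ from $\Delta(c,a)=a$ alone. I expect to handle this by passing to complements: by the lemma "$c\geq b\geq a$ implies $\Delta(b,a)=a\iff\Delta(1,\C(a,b))=\C(a,b)$" applied with the triple $a\le b\le c$, from $\Delta(c,a)=a$ I get $\Delta(\one,\C(a,c))=\C(a,c)$; since $a\le b\le c$ gives $\C(a,c)\le\C(a,b)$ by \lemref{lem:compGD}(c), the upward-closure lemma for $\Delta(\one,-)$-fixed points yields $\Delta(\one,\C(a,b))=\C(a,b)$, and translating back gives $\Delta(b,a)=a$. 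For $\Delta(c,b)=b$: here I want a fixed point statement over $[b,c]$ rather than over $[a,c]$; I would use $\Delta(c,b)=b\meet\Delta(\one,\C(b,c))$ (corollary to the definition) and relate $\C(b,c)$ to $\C(a,c)$ and $\C(a,b)$ via \lemref{lem:compGD}(e) — specifically, in the Boolean algebra $[a,\one]$ one has $\C(b,c)=\C(a,c)\to\C(a,b)$-type relations allowing $\Delta(\one,-)$ to be pushed through, using that $\Delta(\one,-)$ respects the Boolean structure on intervals. The delicate point is making sure all the complement manipulations stay inside the correct interval where $\Delta$ behaves like a Boolean complement composed with the symmetry; I would guard each step by explicitly naming the interval ($[a,c]$ or $[b,c]$) in which the complement is taken, exactly as the preceding lemmas do.
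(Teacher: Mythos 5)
Your overall plan is correct, and the two pieces you need for the forward direction are the right ones: the equivalence $\D(b,a)=a \iff \D(\one,\C(a,b))=\C(a,b)$ and the upward-closure lemma for fixed points of $\D(\one,-)$. The reverse direction and your derivation of $\D(b,a)=a$ (pass to $\C(a,c)$, note $\C(a,c)\le\C(a,b)$, apply upward closure, translate back) are exactly the paper's argument.

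For $\D(c,b)=b$ you take a genuinely different route. The paper derives it \emph{from} $\D(b,a)=a$ by computing $\C(a,b)\meet\D(c,b)=a$ inside $[a,c]$ and then reading off $\D(c,b)\le b$. You instead propose to run the same complement/upward-closure argument a second time, in the interval $[b,c]$. That works, but two details in your sketch need repair. First, the identity you write is reversed: in the Boolean algebra $[a,\one]$ with complement $\C(a,-)$ one has $\C(b,c)=\C(a,b)\to\C(a,c)$, not $\C(a,c)\to\C(a,b)$ (the latter equals $c\join\C(a,b)$, which is typically $\one$). Second, the justification ``push $\D(\one,-)$ through the Boolean structure'' is unsound as stated, because $b$ itself is generally not $T$-fixed, so $T$ does not preserve the decomposition of $\C(b,c)$ termwise. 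Neither repair is hard: all you actually need is the one-line inequality $\C(a,c)=c\to a\le c\to b=\C(b,c)$ (monotonicity of $\to$ in the consequent), whereupon the \emph{same} upward-closure lemma you already invoked gives $\D(\one,\C(b,c))=\C(b,c)$, and then $\D(c,b)=c\meet\D(\one,\C(b,c))=c\meet\C(b,c)=b$ (note also the small slip: the corollary gives $c\meet\D(\one,\C(b,c))$, not $b\meet$). Once cleaned up in this way your route is, if anything, slightly more uniform than the paper's, since it handles both conclusions of the forward direction by the identical mechanism rather than chaining the second through the first.
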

\begin{proof}
	The right to left direction has been done. 
	
	As $\D(c, a)= a$ we have  $\D(1, {\C(a, c)})=\C(a, c)$. From
	$\C(a, b)\geq\C(a, c)$ we have $\D(1, {\C(a, b)})=\C(a, b)$ whence 
	$\D(b, a)= a$. 
	
	Now $\D(b, a)= a$ implies $ b\meet\D(c, {\C(a, b)\meet 
	c})= a$ and hence $\D(c, {\C(a, b)\meet c})\le \C(a, b)\meet c$. 
	So we have $\D(c, {\C(a, b)\meet c})=\C(a, b)\meet c$. 
	Then 
	\begin{align*}
		\C(a, b)\meet\D(c, b)&=\C(a, b)\meet c\meet\D(c, b)\\
		&=\D(c, {\C(a, b)\meet c})\meet\D(c, b)\\
		&=\D(c, {\C(a, b)\meet c\meet b})\\
		&=\D(c, a)\\
		&= a. 
	\end{align*}
	Thus $\D(c, b)\le\C(a, {\C(a, b)})= b$ and so $\D(c, b)= b$. 
\end{proof}

Now we specialize analysis of the fixed points of $\Delta$ to a 
particular element of the algebra.  
\def\fix#1{\text{Fix}({#1})}
\def\bot#1#2{{#1}\join\D({#2}, {#1})}
\def\dd#1#2{\delta^{{{#2}}}({#1})}
\def\d#1{\delta({#1})}

\begin{defn}
	Let $ u$ be in $M$. Then 
	\begin{enumerate}[(a)]
		\item  $\fix u=\Set{ v\geq u | \D(v, u)= u}$;
	
		\item  $\Phi( u)=\Set{ x\le u | \D(u, x)= x}$. 
	\end{enumerate}
\end{defn}

The first things we show are that $\fix u\cap[ u,  v]$ has 
a greatest element and $\Phi( u)\cap[ u,  v]$ has 
a least element for any $ v\geq u$. 

\begin{prop}\label{prop:bot}
	If $ u\le x\le v$ then 
	$$
	\D(v, x)= x\iff  u\join\D(v, u)\le  x. 
	$$
\end{prop}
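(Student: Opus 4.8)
The plan is to prove the two implications separately, using the characterizations of fixed points already established together with the structural facts about $\Delta$ and the local complement $\C(\bullet,\bullet)$ in the Boolean intervals.

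First I would prove the right-to-left direction. Assume $u\join\D(v,u)\le x$. The key observation is that $\D(v,u)\le x\le v$, so by the preceding lemma (the one showing $\D(b,a)=a$ iff $\D(b,a)\le a$, applied with $b=v$, $a=\D(v,u)$ after noting $\D(v,\D(v,u))=u\le x$) we want to leverage \lemref{lem:five} (monotonicity) and the involution property \lemref{lem:four}. Concretely: since $u\le x\le v$ we have $\D(v,u)\le\D(v,x)$ by monotonicity; applying $\D(v,-)$ again and using \lemref{lem:four} gives $\D(v,\D(v,x))=x$ and $\D(v,\D(v,u))=u$, so monotonicity in the other direction combined with $\D(v,u)\le x$ should force $\D(v,x)\le x$ — whence $\D(v,x)=x$ by the ``$\D(b,a)\le a\Rightarrow\D(b,a)=a$'' lemma. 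I would write $x$ as $\D(v,\D(v,x))$, note $\D(v,x)\ge\D(v,x)$ trivially, and instead argue: $u\join\D(v,u)\le x$ gives $\D(v,x)\ge\D(v,u\join\D(v,u))$; one computes $\D(v,u\join\D(v,u))$ — but joins inside $\D$ are delicate, so more robustly I would use the complement form from \corref{cor:one}, $\D(v,x)=v\meet\Delta(\one,v\to x)$, and translate the hypothesis into a statement about $\C(u,v)$ in the Boolean interval $[u,\one]$ (or $[u,v]$), where everything is genuinely Boolean and the fixed-point condition $\D(v,x)=x$ becomes $\Delta(\one,\C(x,v))=\C(x,v)$ by the earlier lemma.

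For the left-to-right direction, assume $\D(v,x)=x$. Since $u\le x$, \propref{prop:fixing} (with $u\le x\le v$) gives $\D(v,u)=\text{?}$ — wait, \propref{prop:fixing} requires knowing $\D(v,x)=x$ yields $\D(x,u)=u$ only if we also had $\D(v,u)=u$, which we do not. Instead I would argue directly: from $\D(v,x)=x$ and $u\le x$, \lemref{lem:five} gives $\D(v,u)\le\D(v,x)=x$, so $\D(v,u)\le x$, and of course $u\le x$, hence $u\join\D(v,u)\le x$. This direction is in fact the easy one — it is essentially just monotonicity plus $u\le x$.

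So the real content is the right-to-left direction, and the main obstacle is that $\D$ does not interact simply with joins, so one cannot naively "apply $\D(v,-)$" to the inequality $u\join\D(v,u)\le x$. The clean route is to pass to the Boolean interval: set $p=\C(x,v)$ and $q=\C(u,v)$ in $[u,\one]$ (using \lemref{lem:compGD}), note $p\le q$ since $u\le x$, rewrite $\D(v,u)\le x$ equivalently as $\Delta(\one,q)\le q$, and rewrite the desired conclusion $\D(v,x)=x$ as $\Delta(\one,p)=p$; then use the already-proved lemma that $\D(v,a)=a\iff\Delta(\one,\C(a,v))=\C(a,v)$ together with the fact (from the "fixed points are upward closed" lemma, $\D(1,a)=a$ and $a\le b$ imply $\D(1,b)=b$, read contrapositively downward is false — but $\Delta(\one,q)\le q$ forces $\Delta(\one,q)=q$, and $p\le q$ with $p=\C(x,v)\ge u$... ) — the finish is to show $\Delta(\one,q)=q$ is exactly the hypothesis $u\join\D(v,u)\le x$ unwound, and that this propagates down to $p$ because $\Delta(\one,-)$ restricted to the relevant sublattice behaves like reflection through $u$. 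I expect the cleanest writeup simply mirrors the computation in \propref{prop:fixing}: translate both sides through $\C(u,\bullet)$ and $\C(x,\bullet)$ into $[u,v]$, where the statement becomes the transparent Boolean identity $\D(v,x)=x\iff\D(v,u)\meet(\text{stuff})\le x$, i.e. $\D(v,u)\join u\le x$.
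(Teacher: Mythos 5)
Your left-to-right direction is correct and is exactly the paper's argument: $\D(v,x)=x$ together with $u\le x$ gives $\D(v,u)\le\D(v,x)=x$ by \lemref{lem:five}, whence $u\join\D(v,u)\le x$.

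Your right-to-left direction, however, has a genuine gap, and you never close it. The ``direct'' monotonicity attempt you sketch first does not work: from $u\join\D(v,u)\le x\le v$, monotonicity of $\D(v,\bullet)$ and the involution $\D(v,\D(v,\bullet))=\text{id}$ give you $u\le\D(v,x)$ and $\D(v,u)\le\D(v,x)$, which is the \emph{wrong} inequality — you want $\D(v,x)\le x$, and nothing in your chain delivers it. You correctly recognize this (``joins inside $\D$ are delicate'') and pivot to the Boolean-interval / complement reformulation, but that passage is not carried through: the final sentences are a declaration that the argument ``propagates down to $p$'' rather than a proof, and along the way the inequality between the complements is stated backward — with $\C(a,b)=b\to a$, from $u\le x$ one gets $\C(u,v)=v\to u\le v\to x=\C(x,v)$, i.e. $q\le p$, not $p\le q$.

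The missing idea is the paper's: first show that $u\join\D(v,u)$ is itself a fixed point of $\D(v,\bullet)$. Indeed $\D(v,u)\le u\join\D(v,u)\le v$ and $u\le u\join\D(v,u)\le v$, so by \lemref{lem:five} and \lemref{lem:four} one gets $u=\D(v,\D(v,u))\le\D(v,u\join\D(v,u))$ and $\D(v,u)\le\D(v,u\join\D(v,u))$, hence $u\join\D(v,u)\le\D(v,u\join\D(v,u))$, and then the lemma ``$\D(b,a)\ge a\Rightarrow\D(b,a)=a$'' yields $\D(v,u\join\D(v,u))=u\join\D(v,u)$. Once this is in hand, \propref{prop:fixing} applied with $u\join\D(v,u)\le x\le v$ immediately gives $\D(v,x)=x$. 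You only ever mention \propref{prop:fixing} in the easy direction (where it is not needed) and miss that it is precisely the engine for the hard one.
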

\begin{proof}
	As $\D(v, { u\join\D(v, u)})= u\join\D(v, u)$,  if 
	$ u\join\D(v, u)\le  x\le  v$ then $\D(v, x)= x$ 
	by proposition
	\ref{prop:fixing}. 
	
	$\D(v, x)= x$ and $ u\le x$ gives $\D(v, u)\le\D(v, 
	x)= x$ and so	$ u\join\D(v, u)\le  x$. 
\end{proof}

\begin{cor}\label{cor:bot}
	If $ u\le v\le w$ then $ v\join\D(w, v)= v\join\D(w, u)$. 
\end{cor}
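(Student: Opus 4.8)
The plan is to deduce this directly from Proposition \ref{prop:bot} by a two-sided inequality argument, working with the three-element chain $u \le v \le w$ and exploiting the monotonicity of $\Delta$ (Lemma \ref{lem:five}, re-proved here as part of the Delta-operator axioms via \propref{prop:fixing}-style reasoning) together with the fixed-point characterization. First I would observe that both sides of the claimed identity are obviously $\ge v$, so it suffices to compare the ``correction terms'' $\D(w,v)$ and $\D(w,u)$ once we have joined $v$ on. Since $u \le v$ and $v \le w$, axiom (d) of \defref{def:delta} gives $\D(w,u) \le \D(w,v)$, hence immediately $v \join \D(w,u) \le v \join \D(w,v)$. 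That is the easy inclusion.

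For the reverse inclusion I would apply \propref{prop:bot} with the chain $u \le x \le w$ where $x := v \join \D(w,u)$. Note $x \ge v \ge u$ and $x \le w$ (both $v$ and $\D(w,u)$ lie below $w$), and $x \ge u \join \D(w,u)$ trivially, so the proposition yields $\D(w, x) = x$, i.e. $x$ is a fixed point of $\D(w,-)$ lying above $v$. Now apply \propref{prop:bot} once more, this time with the chain $v \le \D(w,v)\join v \le w$ — or more directly: since $\D(w,v) = v$ would make everything collapse, the point is that $v \join \D(w,v)$ is the \emph{least} fixed point of $\D(w,-)$ above $v$ (this is exactly what \propref{prop:bot} says, read with $u$ replaced by $v$: $\D(w,y)=y \iff v\join\D(w,v) \le y$ for $v \le y \le w$). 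Applying that equivalence to $y = x = v \join \D(w,u)$, which we just showed is such a fixed point, gives $v \join \D(w,v) \le v \join \D(w,u)$, the reverse inclusion.

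Combining the two inclusions gives $v \join \D(w,v) = v \join \D(w,u)$, as claimed. The main obstacle — really the only nontrivial point — is the verification that $x = v \join \D(w,u)$ is genuinely a $\D(w,-)$-fixed point, i.e. that \propref{prop:bot} applies with the middle term of the chain being a join rather than an atom-like element; this requires only that $u \join \D(w,u) \le x \le w$, both parts of which are immediate from $\D(w,u) \le w$ and $u \le v$. Once that is in hand, the corollary is just the observation that $v \join \D(w,v)$ and $v \join \D(w,u)$ are each the least element of $\fix w \cap [v,w]$, hence equal. No delicate calculation with $\Gamma$ or $\sigma$ is needed; everything is lattice-theoretic manipulation on top of \propref{prop:bot} and axiom (d).
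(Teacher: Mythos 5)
Your proof is correct and follows essentially the same route as the paper: the easy inequality from monotonicity of $\Delta(w,\cdot)$, and the reverse inequality by showing $v\join\Delta(w,u)$ is a $\Delta(w,\cdot)$-fixed point via Proposition~\ref{prop:bot} applied to the chain $u\le v\join\Delta(w,u)\le w$, then invoking Proposition~\ref{prop:bot} again with base point $v$ to conclude it dominates $v\join\Delta(w,v)$. The paper's proof is this same two-application-of-\ref{prop:bot} argument, just stated more tersely.
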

\begin{proof}
	$\D(w, u)\le\D(w, v)$ gives us $ v\join\D(w, v)\geq v\join\D(w, u)$.
	Also $ u\join\D(w, u)\le v\join\D(w, u)\le w$ and so
	$\D(w, { v\join\D(w, u)})= v\join\D(w, u)$, but as
	$ v\le v\join\D(w, u)$ we have $\bot v w \le v\join\D(w, u)$. 
\end{proof}

\begin{prop}
	If $ u\le x\le v$ then 
	$$
	\D(x, u)= u\iff  x\le\C(u,{\relax[ u\join\D(v, u)]})\meet 
	 v. 
	$$
\end{prop}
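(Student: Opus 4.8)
If $u\le x\le v$ then $\D(x,u)=u$ iff $x\le\C(u,[u\join\D(v,u)])\meet v$.

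The plan is to convert the fixed-point condition $\D(x,u)=u$ into a statement about complements in a Boolean interval and then compare with the right-hand side. First I would work inside the Boolean algebra $[u,v]$, in which $\C(u,\bullet)$ is the complementation (this is part of \lemref{lem:compGD}). Let $p=u\join\D(v,u)$; by \propref{prop:bot} the condition $\D(x,u)=u$ is equivalent to $p\le x$ — but wait, that holds only when we already know $u\le x\le v$, which we are assuming. So the left-hand side $\D(x,u)=u$ is equivalent to $p\le x$. On the other side, $x\le\C(u,p)\meet v$ simply says $x\le\C(u,p)$, since $x\le v$ is assumed. So the whole statement reduces to showing $p\le x\iff x\le\C(u,p)$ for $u\le x\le v$. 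That equivalence is false in general in a Boolean algebra — it would force $x$ to be comparable to both $p$ and $\C(u,p)$ which pins $x$ down too much. So I must have the reduction slightly wrong: the right condition coming out of \propref{prop:bot} ought to involve $x$ lying \emph{below} something, not above it. The resolution is that I should apply \propref{prop:bot} not to $x$ directly but to a complement: $\D(x,u)=u$ should be rephrased via \lemref{lem:seven} or the earlier fixed-point lemmas as a statement about $\C(u,x)$, namely $\D(\one,\C(u,x))=\C(u,x)$ or equivalently (using the lemma ``If $c\ge b\ge a$ then $\D(b,a)=a\iff\D(1,\C(a,b))=\C(a,b)$'') $\D(v,\C(u,x))=\C(u,x)\meet v$.

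So the cleaner route is this. By the lemma relating $\D(x,u)=u$ to fixed points of $\D(\one,-)$ applied with the chain $u\le x\le v$, we get $\D(x,u)=u\iff \D(v,\C(u,x)\meet v)=\C(u,x)\meet v$. Now $\C(u,x)\meet v$ is the complement of $x$ in the Boolean interval $[u,v]$; call it $x'$. Then $u\le x'\le v$, and by \propref{prop:bot} (applied with $x'$ in place of $x$) the condition $\D(v,x')=x'$ is equivalent to $u\join\D(v,u)\le x'$, i.e. $p\le x'$. Taking complements in $[u,v]$ reverses this: $p\le x'\iff \C(u,x')\meet v\le \C(u,p)\meet v$, and $\C(u,x')\meet v = x$ by involutivity of complementation (\lemref{lem:compGD}(d)), while $\C(u,p)\meet v$ is exactly the right-hand side of our statement. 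Hence $\D(x,u)=u\iff x\le\C(u,p)\meet v$, as required.

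The steps in order: (1) identify $[u,v]$ as a Boolean algebra with complementation $\C(u,-)\meet v$; (2) rewrite $\D(x,u)=u$ as a $\D(v,-)$-fixed-point condition on the complement $x'=\C(u,x)\meet v$ using the earlier fixed-point lemma; (3) apply \propref{prop:bot} to $x'$ to turn that into $u\join\D(v,u)\le x'$; (4) take complements to flip the inequality and simplify using \lemref{lem:compGD}(d). The main obstacle is step (2): getting the fixed-point translation to land on the complement with the correct ambient top element — one must be careful that the lemma ``$\D(b,a)=a\iff\D(1,\C(a,b))=\C(a,b)$'' (and its relative-to-$c$ version) is being invoked with $a=u$, $b=x$, $c=v$ in precisely the right slots, and that $\C(u,x)$ computed in $M$ agrees with the complement $x'$ computed in $[u,v]$ up to meeting with $v$. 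Once the bookkeeping of which interval each complement lives in is pinned down, the rest is the routine order-reversal of complementation.
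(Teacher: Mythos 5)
Your corrected argument is right and follows essentially the same route as the paper's: the paper's five-step chain of equivalences first re-derives (inline) the fixed-point lemma you cite, namely $\D(x,u)=u\iff\D(v,\C(u,x)\meet v)=\C(u,x)\meet v$, then applies \propref{prop:bot} to the complement $\C(u,x)\meet v$, and finally takes complements in $[u,v]$ to flip the inequality. Your false start (misreading \propref{prop:bot} as characterizing $\D(x,u)=u$ rather than $\D(v,x)=x$) is explicitly caught and repaired, and the final bookkeeping — that $u\le\C(u,x)\meet v\le v$ so \propref{prop:bot} applies, and that $\C(u,-)\meet v$ is an order-reversing involution on $[u,v]$ — matches the paper's.
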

\begin{proof}
\begin{align*}
	\D(x, u)= u& \iff  
	\D(x, u)= x\meet\D(v,{\relax v\meet\C(u, x)})= u \\
	&\iff \D(v,{\relax v\meet\C(u, x)})\le{ v\meet\C(u, x)}\\
	&\iff \D(v,{\relax v\meet\C(u, x)})={ v\meet\C(u, x)}\\ 
	&\iff \bot u v\le { v\meet\C(u, x)}\\
	&\iff  x\le\C(u,{[\bot u v]})\meet v.   
\end{align*}
\end{proof}

\begin{defn}
	If $ u\le v$ then $\delta^{ v}( u)=
	\C(u,{[ u\join\D(v, u)]})\meet  v$ and 
	$\beta^{{ v}}( u)=\bot u v$. 
	
	$\delta( u)=\delta^{{ 1}}( u)$ and 
	$\beta( u)=\beta^{{ 1}}( u)$. 
\end{defn}

\begin{cor}\label{cor:ddd}
	Let $ u\le v$. Then $\dd {\dd u v} v =\dd u v$. 
\end{cor}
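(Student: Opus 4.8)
The plan is to read off from the proposition immediately preceding the definition of $\delta^{v}$ that $\dd u v$ is \emph{precisely the largest} $x$ in the interval $[u,v]$ satisfying $\D(x,u)=u$, and then to show that a second application of $\delta^{v}$ cannot enlarge this element. Throughout write $d=\dd u v=\C(u,{[ u\join\D(v, u)]})\meet v$.

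First I would record the two facts I need about $d$. Since $\D(v,u)\le v$ by axiom (a) of \defref{def:delta}, we have $u\join\D(v,u)\le v$; then $u\le\C(u,{[ u\join\D(v, u)]})$ by \lemref{lem:compGD}(a), and meeting with $v\geq u$ inside the Boolean algebra $[u,\one]$ (\lemref{lem:compGD}(f)) gives $u\le d\le v$. In particular $\dd d v$ makes sense. Taking $x=d$ in the proposition characterising $\dd u v$ as the largest $x\in[u,v]$ with $\D(x,u)=u$ — whose right-hand inequality $d\le d$ holds trivially — gives $\D(d,u)=u$.

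Now put $e=\dd d v=\dd{\dd u v}{v}$. The inequality $d\le e$ is immediate for exactly the reason of the previous paragraph ($d\le\C(d,\,-\,)$ and $d\le v$). For $e\le d$ I would argue as follows. Since $e=\dd d v$, applying the characterising proposition with $d$ in the role of $u$ (again with the trivial right-hand inequality $e\le e$) yields $\D(e,d)=d$. We now have $u\le d\le e$ together with $\D(e,d)=d$ and $\D(d,u)=u$, so \propref{prop:fixing} gives $\D(e,u)=u$. Feeding $x=e$ back into the characterising proposition, this time with $u$, forces $e\le\C(u,{[ u\join\D(v, u)]})\meet v=d$. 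Hence $e=d$, which is the claim.

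The only point requiring care is the bookkeeping forced by the partiality of $\meet$ and of $\D$: each invocation of \propref{prop:fixing} and of the characterising proposition needs its three arguments correctly nested in $[u,v]$ (respectively $[d,v]$), and the meets occurring in the definition of $\delta^{v}$ must be known to exist — all of which follows from \lemref{lem:compGD} together with the fact that $[u,\one]$ is Boolean. There is no genuine computational content here; the corollary is just the idempotence of the operation ``largest fixed point of $\D(\,-\,,u)$ below $v$''.
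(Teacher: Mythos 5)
Your proposal is correct and follows essentially the same path as the paper's proof: the inequality $\dd u v\le\dd{\dd u v}{v}$ is immediate, and the reverse inequality is obtained by noting that $\D(d,u)=u$ (where $d=\dd u v$), applying \propref{prop:fixing} to any $x\in\fix{d}\cap[d,v]$ to conclude $\D(x,u)=u$ and hence $x\le d$ via the characterizing proposition, then specializing to $x=\dd d v$. You merely write out the bookkeeping more explicitly than the paper does.
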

\begin{proof}
	$\dd {\dd u v} v \geq\dd u v$ comes for free.
	
	If $\Delta(x, \dd u v)=\dd u v$ then as $\Delta(\dd u v, u)=u$ we 
	also have $\Delta(x, u)=u$, which implies $x\le\dd u v$. 
	Hence $\dd {\dd u v} v\le \dd u v$.
\end{proof}

We wish to show that the function $\delta$ is order-preserving in both
variables. 
\begin{thm}
	Let $ u\le v\le w$. Then
	\begin{enumerate}[(a)]
		\item  $\dd u v\le\dd u w$;
	
		\item  $\dd u v =\dd u w\meet v$. 
	\end{enumerate}
\end{thm}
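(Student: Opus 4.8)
The plan is to use the characterization of $\dd u v$ provided by the (unlabelled) Proposition stated just before the definition of $\delta$: for $u\le x\le v$ one has $\D(x, u)= u$ if and only if $x\le\dd u v$; equivalently, $\dd u v$ is the largest element $x$ of $[u, v]$ with $\D(x, u)= u$. First I would record two auxiliary facts, each used more than once. (i) $u\le\dd u v\le v$: the inequality $\dd u v\le v$ is immediate from the definition $\dd u v=\C(u, {[u\join\D(v, u)]})\meet v$, while $u\le\C(u, {[u\join\D(v, u)]})$ by \lemref{lem:compGD}(a), so $u\le\dd u v$. (ii) $\D(\dd u v, u)= u$: this follows by applying the displayed equivalence with $x:=\dd u v$, which is legitimate by (i), since then $\dd u v\le\dd u v$ holds trivially (this equality is in any case noted inside the proof of \corref{cor:ddd}). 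Both facts hold equally with $v$ replaced by $w$.

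For part (a) I would simply combine (i) and (ii): from $u\le\dd u v\le v\le w$ and $\D(\dd u v, u)= u$, the displayed equivalence applied with ambient top element $w$ in place of $v$ gives $\dd u v\le\dd u w$.

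For part (b), note first that $\dd u w\meet v$ exists, since $\dd u w$ and $v$ both lie above $u$ and so have a meet, computed in the Boolean algebra $[u, \one]$. The inequality $\dd u v\le\dd u w\meet v$ then follows from part (a) together with $\dd u v\le v$. For the reverse inequality I would put $x=\dd u w\meet v$, so that $u\le x\le\dd u w$; since $\D(\dd u w, u)= u$ by (ii), \propref{prop:fixing} applied to the chain $u\le x\le\dd u w$ forces $\D(x, u)= u$. As also $u\le x\le v$, the displayed equivalence with ambient top element $v$ yields $x\le\dd u v$, i.e. $\dd u w\meet v\le\dd u v$; with the previous inequality this gives $\dd u v=\dd u w\meet v$.

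I do not expect a real obstacle here: there is essentially no computation, and the whole argument runs on the characterization of $\delta$ and on \propref{prop:fixing}. The only thing requiring care is the bookkeeping of which element serves as the ambient top in each appeal ($v$, then $w$, then $\dd u w$), together with the routine check that the meets above exist — which they do, because every element occurring in them lies above $u$.
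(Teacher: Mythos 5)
Your proof is correct and follows essentially the same route as the paper's: part~(a) via the maximality characterization of $\dd u v$, and part~(b) by applying \propref{prop:fixing} to the chain $u\le\dd u w\meet v\le\dd u w$ to get $\D(\dd u w\meet v, u)=u$, then concluding $\dd u w\meet v\le\dd u v$ and combining with part~(a). The only difference is that you explicitly cite \propref{prop:fixing} and the unlabelled proposition where the paper leaves those appeals implicit.
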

\begin{proof}
	\begin{enumerate}[(a)]
		\item  This is clear as $ v\le w$ and
		$\dd u v =\max\left\{\fix u\meet[ u,  v]\right\}$. 
	
		\item  As $\D({\relax\dd{u}{w}}, u)= u$ and $ u\le {\dd u w \meet  
		v}\le\dd u w$
		we have $\D({\relax\dd{u}{w} \meet  v}, u)=u$. Then, as $\dd u w\meet 
		 v\le  v$ this implies ${\dd u w \meet  v}\le\dd u v$. 
		
		From the first part we get $\dd u v\le\dd u w\meet v$. 
	\end{enumerate}
\end{proof}

\begin{cor}\label{cor:bbb}
	Let $ u\le v\le w$. Then
	$\bot u v=(\bot u w)\meet  v$. 
\end{cor}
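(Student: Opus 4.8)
The plan is to read the corollary off the preceding theorem ($\dd u v=\dd u w\meet v$) via the complementation duality between $\bot u v$ and $\dd u v$. Recall that by the definition of $\dd u v$ we have $\dd u v=\C(u,{\bot u v})\meet v$; I claim this says exactly that $\dd u v$ and $\bot u v$ are mutual Boolean complements inside the interval $[u,v]$. Granting that, the theorem turns into the desired identity for $\bot$ by a De Morgan computation.

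First I would fix the Boolean framework. By \lemref{lem:compGD}(f), $[u,\one]$ is a Boolean algebra with operations $\join,\meet$ and complementation $x\mapsto\C(u,x)$, so any interval $[u,v]\subseteq[u,\one]$ is again a Boolean algebra, and its complementation is $x\mapsto\C(u,x)\meet v$: indeed for $u\le x\le v$ one has $x\meet(\C(u,x)\meet v)=(x\meet\C(u,x))\meet v=u\meet v=u$ and $x\join(\C(u,x)\meet v)=(x\join\C(u,x))\meet(x\join v)=\one\meet v=v$, using \lemref{lem:compGD}(b) and distributivity in $[u,\one]$. Hence $\dd u v=\C(u,{\bot u v})\meet v$ exhibits $\dd u v$ as the $[u,v]$-complement of $\bot u v\in[u,v]$, and applying complementation again gives
$$
\bot u v=\C(u,{\dd u v})\meet v,\qquad\text{and likewise}\qquad\bot u w=\C(u,{\dd u w})\meet w.
$$

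Now the computation is routine. Using the preceding theorem in the form $\dd u v=\dd u w\meet v$ and then \lemref{lem:compGD}(e) (applicable since $u\le\dd u w$ and $u\le v$),
$$
\bot u v=\C(u,{\dd u v})\meet v=\C(u,{\dd u w\meet v})\meet v=\bigl(\C(u,{\dd u w})\join\C(u,v)\bigr)\meet v.
$$
Since $\C(u,{\dd u w})$, $\C(u,v)$ and $v$ all lie in $[u,\one]$, I distribute and use $\C(u,v)\meet v=u$ to get $\bot u v=\C(u,{\dd u w})\meet v$; finally $v\le w$ gives $\C(u,{\dd u w})\meet v=\C(u,{\dd u w})\meet w\meet v=\bot u w\meet v$, which is the claim.

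The only step carrying real content is the duality $\bot u v=\C(u,{\dd u v})\meet v$, i.e.\ recognising $\bot u v$ and $\dd u v$ as genuine complements in $[u,v]$ rather than merely in $[u,\one]$; everything after that is De Morgan and absorption. I expect the only obstacle to be the bookkeeping of checking that each meet written down actually exists and that the hypotheses of \lemref{lem:compGD}(e) are met — both dissolved by the observation that every element in sight lies above $u$, hence inside the meet-complete distributive algebra $[u,\one]$. (An essentially identical alternative is to work throughout in the single Boolean algebra $[u,w]$, writing $\bot u v=\bigl((\dd u w\meet v)'\bigr)\meet v$ with $'$ the $[u,w]$-complement and simplifying by De Morgan; this avoids passing between $[u,v]$ and $[u,w]$ but is the same calculation.)
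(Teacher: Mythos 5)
Your proof is correct and uses the same underlying idea as the paper: both read the corollary off the preceding theorem $\dd u v=\dd u w\meet v$ by recognising $\dd u\cdot$ and $\bot u\cdot$ as Boolean complements in the interval $[u,v]$. The paper argues slightly more compactly (identifying $\C(u,\bot u v)\meet v$ and $\C(u,\bot u w)\meet v$ as the $[u,v]$-complements of $\bot u v$ and $(\bot u w)\meet v$ respectively and invoking uniqueness of complements), while you invert the duality to $\bot u v=\C(u,\dd u v)\meet v$ and grind out the De Morgan computation explicitly, but it is the same route.
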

\begin{proof}
	As $\dd u v =\dd u w\meet v$ we have \\
	$\C(u,{(\bot u v)})\meet 
	v=\C(u,{(\bot u w)})\meet  v$. The left hand side is the complement 
	of $\bot u v$ in $[ u,  v]$ and the right hand side is the 
	complement of $(\bot u w)\meet v$ in $[ u,  v]$. Hence 
	$\bot u v=(\bot u w)\meet v$. 
\end{proof}

It follows from this theorem that we  need only concern ourselves with 
$\d u$ in our study of fixed points. 

\begin{lem}
	Let $ u\le v\le w$ and $\D(w, u)\le\D(v, u)$. Then
	$\D(w, v)= v$. 
\end{lem}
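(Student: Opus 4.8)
The plan is to show $\Delta(w,u) \le \Delta(v,u)$ forces $v$ to be a fixed point of $\Delta(w,-)$, using the established machinery around $\beta$ and $\delta$ and the monotonicity results just proved. First I would recall that by \corref{cor:bot} we have $\beta^v(u) = u\join\Delta(v,u) = u\join\Delta(w,u)$ is the least element of $\fix u \cap [u,v]$ — wait, more carefully: $\beta^v(u)$ is characterised via \propref{prop:bot} as the least $x$ with $\Delta(v,x)=x$, and by \corref{cor:bbb} we have $\beta^v(u) = \beta^w(u)\meet v$.

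The key observation I would exploit is the hypothesis $\Delta(w,u)\le\Delta(v,u)$. Since always $\Delta(v,u)\le v\le w$ and $\Delta(w,u)\le\Delta(w,v)$ (by \lemref{lem:five}, monotonicity in the lower argument), and since $\Delta(v,u)\le\Delta(w,v)$ would follow if we can push $\Delta(v,u)$ up appropriately... Actually the cleaner route: I would compute $\beta^w(u) = u\join\Delta(w,u)$. Since $\Delta(w,u)\le\Delta(v,u)\le v$, we get $\beta^w(u)\le v$. But $\beta^w(u)\le v\le w$, and $\beta^w(u)$ is a fixed point of $\Delta(w,-)$; by \propref{prop:fixing} (or \propref{prop:bot}), any element between $\beta^w(u)$ and $w$ that we want fixed needs $\beta^w(u)$ below it — so $\Delta(w, v) = v$ will follow once I show $v$ lies in $\fix u$, i.e. $\beta^w(u)\le v$ AND $\Delta(w,v)=v$. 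The first half is done; for the reverse direction I use that $\beta^w(u)\le v$ implies via \propref{prop:bot} applied with the triple $u\le v\le w$ that $\Delta(w,v)=v$.

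So the actual argument is short: from $\Delta(w,u)\le\Delta(v,u)\le v$ we get $u\join\Delta(w,u)\le v$, that is $\beta^w(u)\le v$; since also $v\le w$, \propref{prop:bot} (with $x:=v$) gives $\Delta(w,v)=v$. The main obstacle — really the only thing to be careful about — is making sure the hypotheses of \propref{prop:bot} are met, namely that $u\le v\le w$ (given) and that we are reading off $\beta^w(u)=u\join\Delta(w,u)$ correctly as the threshold; everything else is an inequality chase. I would also double-check that $\Delta(v,u)\le v$ is available (it is \lemref{lem:two}) so that the bound $\beta^w(u)\le v$ is legitimate. No deep new idea is needed; this lemma is a direct consequence of \propref{prop:bot} together with the monotonicity already catalogued.
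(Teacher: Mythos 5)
Your argument is correct and matches the paper's: both establish $\beta^{w}(u) = u\join\Delta(w,u)\le v\le w$ from the hypothesis $\Delta(w,u)\le\Delta(v,u)\le v$, and then read off $\Delta(w,v)=v$ from \propref{prop:bot} (equivalently, from $\beta^{w}(u)\in\Phi(w)$ together with \propref{prop:fixing}). The mid-proposal detour is unnecessary but the final clean version is exactly the paper's one-line argument.
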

\begin{proof}
	This is immediate as
	$\bot u w\le\bot u v\le  v\le w$ and $\bot u w\in\Phi( 
	w)$. 
\end{proof}

\begin{thm}\label{thm:ddd}
	Let $ u\le v$. Then
	$$
	\d u\le\d v. 
	$$
\end{thm}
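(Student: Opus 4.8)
The plan is to reduce the inequality $\d u \le \d v$ to a statement about fixed points that we can attack with the machinery already assembled, especially \propref{prop:fixing}, \propref{prop:bot}, and the preceding characterizations of $\dd{u}{v}$ as $\max\{\fix u \cap [u,v]\}$. Since $u \le v$, the natural strategy is: first observe that $\d v = \dd{v}{1}$ is a fixed point of $\Delta(1,-)$ over $v$, so by \lemref{lem:six}-style reasoning (the lemma that fixed points propagate upward, i.e.\ ``if $a \le b$ and $\D(1,a)=a$ then $\D(1,b)=b$'') it suffices to show that $u \join \d v$ (or better, $\C(u,\cdot)$ applied appropriately) lands inside the interval that computes $\d u$. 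Concretely, I would show $\d u \le \d v$ by showing that the \emph{defining element} $u \join \D(1,u)$ for $\d u$ is already below the corresponding element for $v$, and then push this through the complementation identities in \lemref{lem:compGD}.

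The key steps, in order, would be: (1) Recall $\dd{u}{v} = \C(u,[u\join\D(v,u)])\meet v$ and $\dd{u}{w}=\C(u,[u\join\D(w,u)])\meet w$ for $u \le v \le w$; in particular with $w = 1$ we get $\d u = \C(u,[\beta(u)])$ where $\beta(u)=u\join\D(1,u)$. (2) From \corref{cor:bot} with the chain $u \le v \le 1$, deduce $\beta(v) = v \join \D(1,v) = v \join \D(1,u)$, hence $\beta(u) = u \join \D(1,u) \le v \join \D(1,u) = \beta(v)$. (3) Now the monotonicity of $\C(-,-)$ in its arguments is subtle because the \emph{base point} changes from $u$ to $v$: we have $u \le v \le \beta(v)$ and $u \le \beta(u) \le \beta(v)$, and I want to compare $\C(u,\beta(u))$ with $\C(v,\beta(v))$. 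Here I would use that $\C(v,\beta(v)) \ge v \ge u$ together with the fact (from \propref{prop:bot} / the lemma just before \thmref{thm:ddd}) that $\D(1,\cdot)$ fixes $\C(v,\beta(v))$, so $\C(v,\beta(v)) \in \fix{?}$; more directly, since $\d v$ is the largest element $x$ with $u \le v \le x$ and $\D(1,x)=x$... but the base is $v$, not $u$. (4) The cleanest route: show $\D(1,\d u) = \d u$ and $\D(1,\d v)=\d v$ (both hold since $\dd{\cdot}{1}$-images are fixed by $\D(1,-)$ — this is exactly \corref{cor:ddd} combined with $\dd{x}{1}$ being a fixed point), and show $\d u \le \d v$ by proving $\d u \in \fix{?}\cap[\text{something}]$ inside the interval defining $\d v$. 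Since $u \le v$, I would argue $\d u \join v$ is fixed by $\D(1,-)$ (upward propagation of fixed points, the lemma ``if $a\le b$ and $\D(1,a)=a$ then $\D(1,b)=b$'' applied to $a = \d u$, $b = \d u \join v$), and simultaneously $\d u \join v \ge v$, so $\d u \join v \ge \dd{v}{1} = \d v$ would go the wrong way — so instead I must show $\d u \le \d v$ directly, which forces me to locate $\d u$ below $v$... which is false in general. Hence the honest argument must be: $\d u$ need not be $\le v$, so ``$\d u \le \d v$'' is a genuine inequality between two elements each $\ge u$, and the right tool is the lemma immediately preceding this theorem: \emph{if $u \le v \le w$ and $\D(w,u)\le\D(v,u)$ then $\D(w,v)=v$}. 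I would apply it with a suitable $w \ge \d v$ (e.g.\ $w = \d u \join \d v$ or $w = 1$) to conclude $\d v$ is $\D(w,-)$-fixed, then combine with $\beta(u)\le\beta(v)$ and \corref{cor:ddd} to squeeze $\d u \le \d v$.

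The main obstacle I expect is exactly this base-point mismatch: $\dd{u}{1}$ and $\dd{v}{1}$ are defined via complements over \emph{different} bottoms ($u$ versus $v$), so one cannot naively invoke monotonicity of a single Boolean complementation. The resolution will hinge on the observation that $\beta(u) \le \beta(v)$ (step 2, via \corref{cor:bot}) together with the identity $\d u = \C(u,\beta(u))$ and the fact that $\beta(u) = u \join \D(1,u)$ while $\D(1,-)$ restricted to $[\d u, 1]$ behaves like $\Delta$ on a cube; concretely I would transfer everything into the distributive lattice $[u,1]$, where $\C(u,-)$ is honest Boolean complementation, note $v, \beta(v), \d v$ all live there (since each is $\ge u$), and then $\d u = \comp{\beta(u)} \le \comp{\beta(v)\meet v}$... and identify $\comp{\beta(v)\meet v}$ with $\d v$ using that $\d v = \C(v,\beta(v)) = \C(v,\beta(v))$, which in $[u,1]$ reads as the complement of $\beta(v)$ relative to $v$, i.e.\ $v \to \beta(v)$-style; the equality $\C(u,\beta(u)) \le v \to \beta(v)$ should then be a routine distributive-lattice computation once $\beta(u)\le\beta(v)$ and $u \le v$ are in hand. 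So the one nontrivial input is \corref{cor:bot}'s consequence $\beta(u)\le\beta(v)$; everything after that is bookkeeping in $[u,1]$.
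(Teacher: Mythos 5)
Your final reduction contains a genuine error that you would need to fix before the argument closes. You write that you want $\d u = \C(u,\beta(u)) \le \comp{\beta(v)\meet v}$ and to ``identify $\comp{\beta(v)\meet v}$ with $\d v$.'' But $\beta(v)\ge v$, so $\beta(v)\meet v = v$ and $\comp{\beta(v)\meet v}=\C(u,v)$, which is not $\d v$; likewise $v\to\beta(v)=\one$ is certainly not $\d v$. The correct identification is that, working in the Boolean algebra $[u,\one]$, the relative complement of $\beta(v)$ over $v$ is $\C(v,\beta(v)) = \C(u,\beta(v))\join v$, so $\d v = \C(u,\beta(v))\join v$. Moreover your closing claim that ``the one nontrivial input is $\beta(u)\le\beta(v)$'' is too weak: the target inequality $\C(u,\beta(u))\le\C(u,\beta(v))\join v$ is equivalent (in $[u,\one]$) to $\beta(v)\le\beta(u)\join v$, which does \emph{not} follow from $\beta(u)\le\beta(v)$ and $u\le v$ alone. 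You actually need the stronger equality you already wrote in step (2), namely $\beta(v) = v\join\Delta(\one,u)$, which (since $v\geq u$) gives $\beta(v) = v\join\beta(u)$; from there $\C(u,\beta(v)) = \C(u,v)\meet\C(u,\beta(u))$ and hence $\d v = \C(u,\beta(u))\join v = \d u\join v\geq\d u$, which is even a little stronger than what the theorem asserts.

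Once that fix is made, your route is correct and genuinely different from the paper's. The paper does not pass to $[u,\one]$ at all: it sets $w=\d u\join\d v$, proves $\Delta(\one,\C(u,\d u))=\C(u,\d u)$ from $\Delta(\d u,u)=u$, runs a distributivity computation to show $\Delta(w,u)=\Delta(\d v,u)\meet\C(u,\d u)\le\Delta(v,u)$, and then invokes the lemma immediately preceding the theorem (if $u\le v\le w$ and $\Delta(w,u)\le\Delta(v,u)$ then $\Delta(w,v)=v$) to get $w\in\fix v$ and hence $w\le\d v$. Your corrected argument replaces that chain by a single Boolean computation driven by \corref{cor:bot}, and yields the sharper identity $\d v=\d u\join v$ as a byproduct, which the paper in fact needs later but obtains separately. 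So the approach is viable and arguably cleaner, but the step as written does not compile; repair the misidentification of $\d v$ and use the equality $\beta(v)=v\join\beta(u)$, not merely the inequality.
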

\begin{proof}
	$ u\le v$ and $\D({\relax\d{v}}, v)= v$ gives us 
	$\D({\relax\d{v}}, 
	u)=\D(v, u)$. Also $\D({\relax\d{u}}, u)= u$ and so 
	$\D(1,{\relax\C(u,{\relax\d{u}})})={\C(u, {\d{u}})}$. Thus we have
	\begin{align*}
		\D({\relax\d u\join\d v}, u)&=(\d u\join\d v)\meet\D(1,{\relax\C(u, {\d u \join\d 
		v})})\\
		&=(\d u\join\d v)\meet\D(1,{\relax\C(u, {\d v})\meet\C(u, {\d u})})\\
		&=(\d u\join\d v)\meet\D(1,{\relax\C(u, {\d v})})\meet\D(1,{\relax\C(u, {\d 
		u})})\\
		&=(\d u\join\d v)\meet\D(1, {\C(u, {\d v})})\meet{\C(u, {\d 
		u})}\\
		&=[(\d u\meet{\C(u, {\d u})})\join(\d v\meet{\C(u,{\d u})})]\meet
		\D(1,{\relax\C(u, {\d v})})\\
		&=\d v\meet{\C(u, {\d u})}\meet
		\D(1,{\relax\C(u, {\d v})})\\
		&=\D({\relax\d v}, u)\meet\C(u, {\relax\d u})\\
		&\le\D({\relax\d v}, u)\\
		&=\D(v, u).  
	\end{align*}
	Hence, by the lemma we have $\D({\relax\d v\join\d u}, v)= v$ which 
	implies $\d v\join\d u\le \d v$ and hence $\d u\le\d v$. 
\end{proof}

\begin{cor}
	Let $ u\le v\le w$ and $\D(v, u)= u$. Then $\dd u w=\dd v w$. 
\end{cor}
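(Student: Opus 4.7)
The plan is to apply proposition \ref{prop:fixing} to convert between $\fix u$ and $\fix v$ on the interval $[v,w]$, and then extract equality of maxima.

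First, note that $\dd u w$ was defined as the maximum of $\fix u\cap [u,w]$ (via the construction $\C(u,[u\join\D(w,u)])\meet w$), and similarly $\dd v w$ is the maximum of $\fix v\cap[v,w]$. Because the hypothesis $\D(v,u)= u$ says exactly that $v\in\fix u\cap [u,w]$, we immediately obtain $v\le \dd u w$, so $\dd u w$ lies in the subinterval $[v,w]$.

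Next, I would observe that proposition \ref{prop:fixing}, together with the hypothesis $\D(v,u)= u$, gives the set equality
\[
\fix u\cap [v,w] \;=\; \fix v\cap[v,w].
\]
Indeed, for any $x$ with $v\le x\le w$, $\Delta(x,u)=u$ iff $\Delta(x,v)=v$ and $\Delta(v,u)=u$; but the second conjunct is assumed, so the two membership conditions coincide.

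The result now drops out. Since $\dd u w\in\fix u\cap[v,w]$, it lies in $\fix v\cap[v,w]$, whose maximum is $\dd v w$, giving $\dd u w\le \dd v w$. Conversely $\dd v w\in\fix v\cap[v,w]\subseteq\fix u\cap[u,w]$, and $\dd v w\le w$, so $\dd v w\le\dd u w$. Hence $\dd u w=\dd v w$. There is no real obstacle here; the only subtlety is remembering to first establish that $\dd u w$ actually lies above $v$ before invoking the equivalence on $[v,w]$.
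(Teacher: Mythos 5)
Your proof is correct, and it takes a somewhat different route from the paper's. For the inequality $\dd v w \le \dd u w$, you and the paper do essentially the same thing: both conclude from $\D({\relax\dd v w}, v)= v$ and $\D(v,u)= u$ that $\dd v w\in\fix u\cap[u,w]$ and then use maximality. The difference is in the other inequality. The paper cites theorem~\ref{thm:ddd} ($\d u\le\d v$) and restricts to $[u,w]$ via $\dd u w=\d u\meet w$, whereas you first show $v\le\dd u w$, then use proposition~\ref{prop:fixing} to get the set equality $\fix u\cap[v,w]=\fix v\cap[v,w]$ and read off $\dd u w\le\dd v w$ from maximality. Your version is more self-contained and symmetric --- both inequalities drop out of the single set equality --- and it avoids invoking the global monotonicity theorem; the paper's version is shorter because it reuses theorem~\ref{thm:ddd}, which has already been established. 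Both ultimately rest on the fixed-point composition/decomposition captured in proposition~\ref{prop:fixing}.
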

\begin{proof}
	From the theorem we have $\dd u w\le\dd v w$. From 
	$\D({\relax\dd v w}, v)= v$ and $\D(v, u)= u$ we have $\D({\relax\dd v w}, 
	u)= u$ and hence $\dd v w\le\dd u w$. 
\end{proof}

\begin{cor}\label{cor:dddd}
	Let $ u\le v\le w$ and $\D(v, u)= u$. Then 
	$$
	(\bot v w)\meet\C(u, {[\bot u w]})= v. 
	$$
\end{cor}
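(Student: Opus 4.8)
The plan is to reduce the identity to a single computation in the Boolean interval $[ u,  \one]$, after first re-expressing $\bot v w$ in terms of $\bot u w$. Since $ u\le v\le w$, \corref{cor:bot} gives $ v\join\D(w, v)= v\join\D(w, u)$, and because $ u\le v$ this says exactly that $\bot v w= v\join\bot u w$. So the left-hand side of the asserted equality becomes $( v\join\bot u w)\meet\C(u,{[\bot u w]})$; moreover $ v$, $\bot u w$, $\bot v w$ and $\C(u,{[\bot u w]})$ all lie above $ u$, so the entire computation stays inside the distributive (indeed Boolean) interval $[ u,  \one]$.

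Next I would use the hypothesis $\D(v, u)= u$. By the proposition characterising the $\D(\bullet, u)$-fixed points below $ w$ --- for $ u\le x\le w$ one has $\D(x, u)= u$ iff $ x\le\C(u,{[ u\join\D(w, u)]})\meet w$, i.e.\ iff $ x\le\dd u w$ --- we obtain $ v\le\C(u,{[\bot u w]})\meet w$, and in particular the inclusion $ v\le\C(u,{[\bot u w]})$, which is the crux of the proof.

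It then remains to distribute in $[ u,  \one]$:
\[
( v\join\bot u w)\meet\C(u,{[\bot u w]})=\bigl( v\meet\C(u,{[\bot u w]})\bigr)\join\bigl(\bot u w\meet\C(u,{[\bot u w]})\bigr).
\]
By \lemref{lem:compGD} the second term is $ u$, since $\C(u,{[\bot u w]})$ is the complement of $\bot u w$ over $ u$; the first term is $ v$, by the inclusion just established together with $ u\le v$. As $ u\le v$, the join $ v\join u$ equals $ v$, which is the desired value.

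The proof is short; the only point that needs any care --- and hence the main (mild) obstacle --- is legitimising the distributive expansion, i.e.\ checking that all four elements genuinely sit in one Boolean interval. That is precisely why the reduction $\bot v w= v\join\bot u w$ is carried out first, so that everything can be computed inside $[ u,  \one]$. Beyond that, the argument is a direct appeal to \corref{cor:bot}, the fixed-point characterisation, and the complement axioms of \lemref{lem:compGD}.
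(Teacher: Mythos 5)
Your proof is correct. The identity $\bot v w = v\join\bot u w$ does follow from \corref{cor:bot} exactly as you say; the inclusion $v\le\C(u,{[\bot u w]})$ does follow from the fixed-point characterisation $\D(x,u)=u\iff x\le\dd u w$; and since all four elements lie in $[u,\one]$, which is Boolean by \lemref{lem:compGD}(f), the distributive expansion is legitimate, yielding $v\join u=v$. Everything checks out.

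That said, your route differs from the paper's. The paper relies on the corollary immediately preceding this one, namely that under the hypothesis $\D(v,u)=u$ one has $\dd u w=\dd v w$, equivalently $\C(u,{[\bot u w]})\meet w=\C(v,{[\bot v w]})\meet w$. Then since $\bot v w\le w$ the left-hand side becomes $(\bot v w)\meet\C(v,{[\bot v w]})\meet w$, which is $v\meet w=v$ by \lemref{lem:compGD}(b). That is a three-line computation that trades your explicit Boolean distributivity argument for the substitution $\dd u w\mapsto\dd v w$. Your approach makes the mechanism more transparent — one sees exactly how $\D(v,u)=u$ forces $v$ into the ``complement'' factor — at the price of an extra reduction step and the need to verify that the whole computation sits inside a single Boolean interval. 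The paper's proof is more compact but leans harder on the scaffolding built up in the previous corollaries. Both are sound, and each illuminates a slightly different face of the identity.
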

\begin{proof}
	\begin{align*}
			(\bot v w)\meet\C(u, {[\bot u w]})&=(\bot v w)\meet\C(u, {[\bot u 
		w]})\meet w\\
		&=(\bot v w)\meet\C(v, {[\bot v w]})\meet w\\
		&= v. 
	\end{align*}
\end{proof}

\begin{cor}
	Let $ v_1,  v_2\in\fix u\cap[ u,  w]$ and 
	$ v_1\join\D(w, u)= v_2\join\D(w, u)$. Then $ v_1= v_2$. 
\end{cor}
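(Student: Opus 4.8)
The plan is to obtain both $ v_1$ and $ v_2$ as the image of the common element $ v_1\join\D(w, u)= v_2\join\D(w, u)$ under one fixed map, so that equality is forced. The two ingredients are \corref{cor:bot} and \corref{cor:dddd}, both already available.

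First I would record what the hypotheses say: $ v_i\in\fix u\cap[ u,  w]$ means $ u\le v_i\le w$ and $\D(v_i, u)= u$. Since $ u\le v_i\le w$, \corref{cor:bot} gives $\bot{v_i}w= v_i\join\D(w, v_i)= v_i\join\D(w, u)$, so the hypothesis can be rewritten as $\bot{v_1}w=\bot{v_2}w$. Next, because $\D(v_i, u)= u$, \corref{cor:dddd} applied with $ v:= v_i$ (and $ u\le v_i\le w$) yields
\[
v_i=(\bot{v_i}w)\meet\C(u, {[\bot u w]})
\]
for $i=1, 2$. Combining the two facts, $ v_i=( v_i\join\D(w, u))\meet\C(u, {[\bot u w]})$; the first factor is common to $i=1$ and $i=2$ by hypothesis, and the second factor does not involve $i$, so $ v_1= v_2$.

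I do not expect a genuine obstacle: the argument is just the composition of two corollaries. The only point needing care is the passage from $\D(w, v_i)$ to $\D(w, u)$, which is precisely \corref{cor:bot}, and is what makes the hypothesis (stated in terms of $\D(w, u)$) usable. If one preferred not to invoke \corref{cor:dddd}, an equivalent route is to work inside the Boolean algebra $[ u,  w]$: there $ v_1,  v_2\le\dd u w$ since $\fix u\cap[ u,  w]=[ u, \dd u w]$, and $\dd u w$ is the complement of $\bot u w$ in $[ u,  w]$; rewriting the hypothesis as $ v_1\join\bot u w= v_2\join\bot u w$ (using $ v_i\geq u$) and meeting both sides with $\dd u w$, distributivity collapses each side to $ v_i$. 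I would present the first, shorter version.
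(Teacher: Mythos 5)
Your proof is correct and is essentially the argument the paper intends: the paper's own proof reads only ``This is immediate from the last corollary'' (i.e.\ \corref{cor:dddd}), and you have correctly supplied the small extra step — the invocation of \corref{cor:bot} to rewrite $\D(w,{v_i})$ as $\D(w, u)$ so that the stated hypothesis becomes usable in the identity of \corref{cor:dddd}. Your alternative Boolean-complement argument in $[u, w]$ is also sound, but the first version matches the paper's approach and reuses exactly the material the paper has just developed.
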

\begin{proof}
	This is immediate from the last corollary. 
\end{proof}

Putting the last corollary and corollary \ref{cor:bot} together, we see that
the function 
\begin{align*}
	\alpha\colon\fix u\cap[ u,  w]&\to\Phi( w)\cap[ u,  w]\\
	\text{ given by }\qquad\qquad\alpha( x)&= x\join\D(w, u)
\end{align*}
is well-defined, order-preserving and one-one. We also want to show that 
it is onto. 

\begin{lem}
	Let $ u\le y\le w$ and $\D(w, y)= y$. Then there is a $ 
	y'$ with
	\begin{enumerate}[(i)]
		\item  $ u\le y'\le w$;
	
		\item  $\D({y'}, u)= u$;
	
		\item  $ y'\join\D(w, y')= y$. 
	\end{enumerate}
\end{lem}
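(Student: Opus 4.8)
The plan is to take $y'$ to be the value at $y$ of the map one expects to invert $\alpha$, i.e. ``meet with $\dd u w$'':
$$
y'=\dd u w\meet y,
$$
a meet which exists since $u$ is a common lower bound (exactly as for the meets defining $\dd u w$ itself). Condition~(i) is then immediate: $u\le\dd u w\meet y\le y\le w$. For~(ii), note $y'\le\dd u w$ and $u\le y'\le w$, so the characterization established above of $\fix u\cap[u,w]$ as the interval $[u,\dd u w]$ gives $\D({y'},u)=u$.

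For~(iii) the one substantive point is that $\dd u w=\C(u,{[\bot u w]})\meet w$ and $\bot u w=u\join\D(w,u)$ are complementary in the Boolean algebra $[u,w]$; in particular $\dd u w\join(\bot u w)=w$. Indeed, working in $[u,1]$,
\begin{align*}
\dd u w\join(\bot u w)
 &=\bigl(\C(u,{[\bot u w]})\meet w\bigr)\join(\bot u w)\\
 &=\bigl(\C(u,{[\bot u w]})\join(\bot u w)\bigr)\meet\bigl(w\join(\bot u w)\bigr)\\
 &=\one\meet w=w,
\end{align*}
using $\D(w,u)\le w$ (so $w\join(\bot u w)=w$) and that $\C(u,{[\bot u w]})$ is the complement of $\bot u w$ in $[u,1]$. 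Next, from $\D(w,y)=y$ with $u\le y\le w$, \propref{prop:bot} gives $\bot u w\le y$. Hence, again in $[u,1]$ and using distributivity,
\begin{align*}
y'\join\D(w,u)
 &=(\dd u w\meet y)\join(\bot u w)\\
 &=\bigl(\dd u w\join(\bot u w)\bigr)\meet\bigl(y\join(\bot u w)\bigr)\\
 &=w\meet y=y.
\end{align*}
Finally, since $u\le y'\le w$, \corref{cor:bot} gives $y'\join\D(w,{y'})=y'\join\D(w,u)=y$, which is~(iii).

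The only step that is not a routine calculation is spotting the candidate $y'$ and recognizing the complementation: that $(\dd u w,\bot u w)$ is a complementary pair in $[u,w]$, so that ``join with $\bot u w$'' and ``meet with $\dd u w$'' are mutually inverse order-isomorphisms between $\fix u\cap[u,w]=[u,\dd u w]$ and $\Phi(w)\cap[u,w]=[\bot u w,w]$. Once that is in hand the verification is a short Boolean-algebra computation, legitimate because every interval $[a,1]$ in an implication algebra — hence $[u,w]$ — is Boolean.
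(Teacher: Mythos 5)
Your proposal is correct and takes essentially the same approach as the paper: you choose the same candidate $y'=\dd u w\meet y$ (the paper writes it as $y\meet\d u$, noting it equals $y\meet\dd u w=y\meet\C(u,{[\bot u w]})$), deduce (ii) from the characterization $\fix u\cap[u,w]=[u,\dd u w]$, and establish (iii) via \corref{cor:bot}, distributivity in $[u,\one]$, and \propref{prop:bot}. The only cosmetic difference is that you keep the factor $w$ inside $\dd u w$ and show $\dd u w\join(\bot u w)=w$ separately, whereas the paper drops the $w$ (since $y\le w$) and uses $\C(u,{[\bot u w]})\join(\bot u w)=\one$ directly; the two manipulations are interchangeable.
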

\begin{proof}
	Let $ y'= y\meet\d u$ (which we note is also equal to $ 
	y\meet\dd u w$ and equal to $ y\meet\C(u, {[\bot u w]})$). 
	Then we easily have $ u\le  y'\le\dd u w\le w$ and so $\D(y', 
	u)= u$. 
	Lastly
	\begin{align*}
		 y'\join\D(w, y')&= y'\join\D(w, u)\\
		&= y'\join\bot u w\\
		&=( y\meet\C(u, {[\bot u w]})\join\bot u w\\
		&=( y\join\bot u w)\meet\left(\C(u, {[\bot u w]})\join\bot u 
		w\right)\\
		&\qquad\qquad\qquad\text{ 
		in }[ u,  1]\\
		&= y\join\bot u w\\
		&= y\text{ by proposition \ref{prop:bot}}. 
	\end{align*}
\end{proof}

\begin{cor}\label{cor:alpha}
	For all $ u\le v$ 
	\begin{align*}
	( v\meet\d u)\join\D(v, u)&= v\\
	\intertext{ and }
	(\bot u v)\meet\d u&= u. 
	\end{align*}
\end{cor}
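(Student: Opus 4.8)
The plan is to reduce both identities to complementation inside the Boolean interval $[u, v]$, the key preliminary step being to rewrite $v\meet\d u$ by means of the theorem asserting $\dd u v=\dd u w\meet v$ for $u\le v\le w$. Since $\d u=\dd u{\one}$ by definition, applying that theorem with $w=\one$ gives
\[
	v\meet\d u\;=\;v\meet\dd u{\one}\;=\;\dd u v .
\]
Hence the two asserted equalities are equivalent to $\dd u v\join\D(v, u)=v$ and $(\bot u v)\meet\dd u v=u$, the latter after meeting with $v$, which is harmless because $\bot u v\le v$ (indeed $\D(v, u)\le v$ by \lemref{lem:two}).

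Next I would show that, inside the Boolean algebra $[u, \one]$ (principal filters of an implication algebra being Boolean, as used repeatedly above), the elements $\dd u v$ and $\bot u v=u\join\D(v, u)$ are complementary over $u$ within $[u, v]$. Indeed $\dd u v=\C(u, {[\bot u v]})\meet v$ by the definition of $\dd u v$, and since $u\le\bot u v\le v$, \lemref{lem:compGD}(b) applied to the pair $u\le\bot u v$ gives $\bot u v\meet\C(u, {[\bot u v]})=u$ and $\bot u v\join\C(u, {[\bot u v]})=\one$. Meeting the first of these with $v$ and using $u\le v$ yields $\dd u v\meet\bot u v=u$; joining the second with $\bot u v$, using distributivity in $[u, \one]$ together with $\bot u v\le v$, yields
\[
	\dd u v\join\bot u v\;=\;(\C(u, {[\bot u v]})\join\bot u v)\meet(v\join\bot u v)\;=\;\one\meet v\;=\;v .
\]

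Finally I would combine these facts. For the first identity, since $u\le\dd u v$,
\[
	(v\meet\d u)\join\D(v, u)\;=\;\dd u v\join\D(v, u)\;=\;\dd u v\join\bigl(u\join\D(v, u)\bigr)\;=\;\dd u v\join\bot u v\;=\;v ;
\]
for the second, since $\bot u v\le v$,
\[
	(\bot u v)\meet\d u\;=\;(\bot u v)\meet v\meet\d u\;=\;(\bot u v)\meet\dd u v\;=\;u .
\]
I do not anticipate any genuine obstacle: the only point requiring care is that $\dd u v$ is the actual complement of $\bot u v$ in $[u, v]$ rather than merely a lower or upper bound, and this is read off directly from the explicit formula defining $\dd u v$ together with \lemref{lem:compGD}. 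A shorter route to the first identity is available via the preceding lemma: taking $y=v=w$ there (so $\D(w, y)=\D(v, v)=v$), the element $y'$ produced in its proof is precisely $v\meet\d u$, and that proof shows $y'\join\D(v, y')=v$ with $\D(v, y')=\D(v, u)$, giving the claim at once.
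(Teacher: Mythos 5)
Your main argument is correct, and it is a more self-contained route than the paper's. The paper proves the first identity by specializing the preceding lemma (take $y=w=v$, so $y'=v\meet\d u$ and the lemma's proof gives $y'\join\D(v,u)=v$) and proves the second by specializing \corref{cor:dddd}. You instead observe, via the theorem $\dd u v=\dd u w\meet v$ with $w=\one$, that $v\meet\d u=\dd u v$, and then derive \emph{both} identities from the single fact that $\dd u v$ and $\bot u v$ are relative complements of one another over $u$ inside $[u,v]$; that fact you read off directly from the defining formula $\dd u v=\C(u,{[\bot u v]})\meet v$ and the Boolean complementation laws $b\meet\C(a,b)=a$, $b\join\C(a,b)=\one$ in the principal filter $[u,\one]$. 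This is a nice unification: the corollary's two clauses are literally the two halves of ``$\dd u v$ complements $\bot u v$ in $[u,v]$,'' and your proof makes that visible rather than routing through two separately-derived prior results. The one small slip is in the final aside offering a shorter route: the proof of the preceding lemma does \emph{not} establish $\D(v,y')=\D(v,u)$, and that equality can fail (e.g. when $y'=v$ one has $\D(v,y')=v$, while $\D(v,u)$ need not be $v$); what it actually establishes, by \corref{cor:bot}, is the weaker $y'\join\D(v,y')=y'\join\D(v,u)$, which is still enough for the claim. Since that aside is only an alternative and your main computation never relies on it, the proof stands.
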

\begin{proof}
	From the proof of the lemma, as $\D(v, v)= v$ and $ v'= v\meet\d u$. 
	The second equation is a special case of corollary \ref{cor:dddd}. 
\end{proof}

\begin{thm}\label{thm:iso}
	Let $ u\le w$. Then the function
	\begin{align*}
	\alpha\colon\fix u\cap[ u,  w]&\to\Phi( w)\cap[ u,  w]\\
	\text{ given by }\qquad\qquad\alpha( x)&= x\join\D(w, u)\\
	\intertext{ is an isomorphism with inverse }
	\alpha^{-1}( y)&= y\meet\d u. 
\end{align*}
\end{thm}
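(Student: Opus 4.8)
The plan is to verify directly that the two maps $\alpha(x) = x\join\D(w,u)$ and $\alpha^{-1}(y) = y\meet\d u$ are mutually inverse order isomorphisms between $\fix u\cap[u,w]$ and $\Phi(w)\cap[u,w]$. Most of the groundwork has already been laid. From \propref{prop:bot} and \corref{cor:bot} we know $\alpha$ maps $\fix u\cap[u,w]$ into $\Phi(w)\cap[u,w]$ and is order-preserving; from the lemma preceding \corref{cor:alpha} (together with the injectivity corollary just before it) we know $\alpha$ is injective and surjective. So the skeleton of the argument is: (1) recall $\alpha$ is a well-defined order-preserving bijection; (2) recall that for $y\in\Phi(w)\cap[u,w]$ the element $y' = y\meet\d u$ lies in $\fix u\cap[u,w]$ and satisfies $\alpha(y') = y$, which is precisely the content of that lemma (using $\d u = \dd u w\meet$ nothing extra since $u\le w$, and noting $y\meet\d u = y\meet\dd u w$); (3) show $\alpha^{-1}$ as defined is genuinely a two-sided inverse.

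For step (3), one direction is the lemma: $\alpha(\alpha^{-1}(y)) = (y\meet\d u)\join\D(w,u) = y$ for $y\in\Phi(w)\cap[u,w]$, which is exactly statement (iii) of that lemma with $y' = y\meet\d u$. The other direction is the first equation of \corref{cor:alpha}: for $x\in\fix u\cap[u,w]$ we have $\alpha^{-1}(\alpha(x)) = (x\join\D(w,u))\meet\d u$, and I would compute this in the distributive interval $[u,1]$ as $(x\meet\d u)\join(\D(w,u)\meet\d u) = x\join u = x$, using $x\le\dd u w\le\d u$ (so $x\meet\d u = x$) and $\D(w,u)\meet\d u = u$ from the second equation of \corref{cor:alpha} (noting $\D(w,u) = \bot u w\meet$ — more precisely $\D(w,u)\le\bot u w$, so $\D(w,u)\meet\d u\le\bot u w\meet\d u = u$, hence equals $u$). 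Since both composites are the identity, $\alpha$ is a bijection with the stated inverse; being order-preserving with an order-preserving inverse (the inverse is a restricted meet, hence monotone), it is an order isomorphism.

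The main obstacle, such as it is, is bookkeeping about where each meet or join is being computed: several of the cited identities (weak distributivity, the complement laws of \lemref{lem:compGD}) only hold inside a suitable Boolean interval $[u,1]$ or $[a,1]$, so I must check that $x$, $\d u$, $\D(w,u)$, and $\bot u w$ all lie above $u$ before invoking distributivity — they do, since $x\geq u$, $\d u = \C(u,\ldots)\geq u$, and $\D(w,u)\le\bot u w$ with $u\le\D(w,u)$ failing in general but $u\le\bot u w$ holding. The cleanest route is to observe that $\alpha^{-1}(\alpha(x))$ and $x$ are both elements of $\fix u\cap[u,w]$ with the same image under the already-established injection $\alpha$, so they are equal — this sidesteps the distributivity computation entirely and reduces step (3) to the already-proven lemma. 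I would present this slick version as the main line and relegate the explicit computation to a remark.
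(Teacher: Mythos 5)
Your proposal is correct and follows essentially the paper's own route: the terse ``From the lemma and earlier remarks'' unpacks exactly as you lay out, with the lemma supplying the explicit preimage $y\meet\d u$ (and establishing surjectivity), and the immediately preceding corollaries giving injectivity, well-definedness, and order-preservation; your ``slick'' observation that $\alpha^{-1}(\alpha(x))$ and $x$ have the same $\alpha$-image, hence coincide by injectivity, is the right way to close the loop. One caution for the computation you relegate to a remark: the meet $\D(w,u)\meet\d u$ need not exist a priori since $\D(w,u)\not\geq u$ in general, so the distributive step should be carried out in $[u,\one]$ after first rewriting $x\join\D(w,u)=x\join(\bot u w)$ (valid because $x\geq u$), yielding $(x\join(\bot u w))\meet\d u = (x\meet\d u)\join((\bot u w)\meet\d u) = x\join u = x$ by corollary~\ref{cor:alpha}.
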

\begin{proof}
	From the lemma and earlier remarks. 
\end{proof}

Let us now move on to global notions associated with fixed points. 
\begin{defn}
	$ u$ is \emph{somewhere invariant} iff there is some $ v> u$ 
	with $\D(v, u)= u$. If $ u$ is not somewhere invariant then we say 
	it is \emph{nowhere invariant}. 
\end{defn}
\def\si{somewhere invariant}
\def\ni{nowhere invariant}

\begin{lem}\label{lem:ni}
	$u$ is \ni\ iff $u\join\Delta(\one, u)=\one$. 
\end{lem}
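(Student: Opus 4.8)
I want to characterize nowhere-invariance of $u$ purely in terms of the join $u\join\Delta(\one,u)$. The natural device is the isomorphism of \thmref{thm:iso}, or more directly the element $\beta(u)=\beta^{\one}(u)=u\join\Delta(\one,u)$, which by \propref{prop:bot} (taken with $v=\one$) is the least element $x$ of $[u,\one]$ with $\Delta(\one,x)=x$, equivalently the least fixed point of $T=\Delta(\one,\cdot)$ above $u$. So the statement to prove is: $u$ has no strict somewhere-invariance witness iff this least fixed point $\beta(u)$ already equals $\one$.

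\textbf{Key steps.} First I would unwind the definitions: $u$ is somewhere invariant iff $\fix u\cap[u,\one]\neq\{u\}$, i.e. iff $\delta(u)=\delta^{\one}(u)=\C(u,[u\join\Delta(\one,u)])\meet\one=\C(u,\beta(u))$ is strictly above $u$ (recall $\delta(u)=\max(\fix u\cap[u,\one])$ and $\dd uw\ge u$ always, with equality exactly when $u$ is nowhere invariant). Second, the forward direction: if $u$ is nowhere invariant then $\delta(u)=u$, so $\C(u,\beta(u))=u$; applying $\C(u,\cdot)$ and using \lemref{lem:compGD}(d) gives $\beta(u)=\C(u,\C(u,\beta(u)))=\C(u,u)=\one$, i.e. $u\join\Delta(\one,u)=\one$. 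Third, the reverse direction: if $u\join\Delta(\one,u)=\one$, then $\beta(u)=\one$, so $\delta(u)=\C(u,\one)=u$ by \lemref{lem:compGD} (the complement of $\one$ over $u$ in $[u,\one]$ is $u$); hence $\fix u\cap[u,\one]=\{u\}$ and there is no $v>u$ with $\Delta(v,u)=u$, so $u$ is nowhere invariant.

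\textbf{Alternative packaging.} Equivalently, and perhaps cleaner to write, I would use \thmref{thm:iso} with $w=\one$: $\alpha\colon\fix u\cap[u,\one]\to\Phi(\one)\cap[u,\one]$ is an isomorphism, and $\Phi(\one)\cap[u,\one]$ is exactly $\{x\in[u,\one]\mid\Delta(\one,x)=x\}$, which has least element $\beta(u)=u\join\Delta(\one,u)$. Then $u$ is nowhere invariant iff $\fix u\cap[u,\one]=\{u\}$ iff $\Phi(\one)\cap[u,\one]=\{\beta(u)\}$ has top element $\one$ mapped from $u$... actually the crispest phrasing just observes $\alpha(u)=u\join\Delta(\one,u)=\beta(u)$ and that $u$ is the least element of $\fix u\cap[u,\one]$, so it is the only element iff $\fix u\cap[u,\one]=\{u\}$, and running this through $\alpha$ one finds this forces $\beta(u)$ to be forced up to $\one$ only when combined with the complementation identity above; so the complementation argument of the previous paragraph is really the load-bearing part.

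\textbf{Main obstacle.} The only subtlety is making sure the meets and complements involved actually exist and that $\C(u,\cdot)$ really does behave as an involution on $[u,\one]$ — but this is exactly \lemref{lem:compGD}, which tells us $[u,\one]$ is a Boolean algebra under $\join,\meet,\C(u,\cdot)$, so $\C(u,\C(u,x))=x$ and $\C(u,u)=\one$, $\C(u,\one)=u$ are all available. Thus I expect no real difficulty; the proof is a two-line unwinding of $\beta$ and $\delta$ plus one application of Boolean complementation, and the ``hard part'' is merely choosing which of the equivalent formulations (via $\delta(u)$, via $\beta(u)$, or via $\alpha$) to present most economically.
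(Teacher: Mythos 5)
Your proof is correct and takes essentially the same route as the paper: both reduce nowhere-invariance to $\delta(u)=u$ and then unwind $\delta(u)=\C(u,\beta(u))$ via Boolean complementation in $[u,\one]$ to get $\beta(u)=\one$. The paper does this in two lines of $\iff$'s; your version just spells out the two directions separately (and the ``alternative packaging'' paragraph is redundant, as you yourself note).
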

\begin{proof}
	Clearly $ u$ is \ni\ iff $ u=\d u$. 
	But 
	\begin{align*}
		\d u=u & \iff (u\join\Delta(\one, u))\to u=u  \\
		 & \iff u\join\Delta(\one, u)=\one. 
	 \end{align*}
\end{proof}

\begin{thm}
	If $u$ is \ni\ and $ u\le v$ then $ v$ is \ni. 
\end{thm}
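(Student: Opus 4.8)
The plan is to use the characterization just established in \lemref{lem:ni}: $u$ is nowhere invariant iff $u\join\Delta(\one, u)=\one$, equivalently iff $u=\delta(u)$. So it suffices to show that $u\le v$ together with $u\join\Delta(\one,u)=\one$ forces $v\join\Delta(\one,v)=\one$. First I would recall from \thmref{thm:ddd} that $\delta$ is order-preserving, so $u\le v$ gives $\delta(u)\le\delta(v)$; since $u$ is nowhere invariant we have $u=\delta(u)$, hence $u\le\delta(v)$. But $\delta(v)\le v$ always (it lies in the interval $[u,v]$, or directly $\delta^v(v)=v$ and $\delta(v)=\delta^{\one}(v)\meet$ nothing forces it below — more carefully, $\delta(v)=\C(v,[v\join\Delta(\one,v)])\le \one$, and the relevant fact is $\delta^v$-type bounds), so I would instead argue directly that $v\le\delta(v)$ and combine with $\delta(v)\le v$ is not what we want; rather we want $v=\delta(v)$.

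The cleaner route: by \corref{cor:alpha} (or the remark that $\dd u v =\max\{\fix u\cap[u,v]\}$ and the isomorphism $\alpha$ of \thmref{thm:iso}), since $u=\delta(u)=\dd u w$ for every $w\ge u$, in particular $\dd u v = u$, meaning the only element $x$ of $[u,v]$ with $\Delta(x,u)=u$ is $u$ itself. Now I want to show $v=\delta(v)$, i.e.\ $\Delta(\one,v)\join v=\one$, i.e.\ $v$ is the top of $\fix v\cap[v,\one]$'s image — equivalently, no $x>v$ has $\Delta(x,v)=v$. Suppose toward a contradiction that $v<x$ with $\Delta(x,v)=v$. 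Then $u\le v\le x$ with $\Delta(x,v)=v$, and I would like to conclude something about $\Delta(x,u)$. By \propref{prop:fixing} (with $a=u$, $b=v$, $c=x$), $\Delta(x,u)=u$ holds iff $\Delta(x,v)=v$ and $\Delta(v,u)=u$. We do not know $\Delta(v,u)=u$, so this does not immediately apply; instead I would use \corref{cor:ddd}-style reasoning: $\Delta(x,v)=v$ means $v\in\fix v\cap[v,x]$ properly, so $v$ is somewhere invariant, and then I transfer this down to $u$.

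The key step — and the main obstacle — is transferring "somewhere invariant" from $v$ down to $u$. For this I would use \propref{prop:fixing} in the contrapositive together with the theorem on $\dd u v = \dd u w\meet v$. Concretely: $u$ nowhere invariant means $\dd u w = u$ for all $w\ge u$. Suppose $v$ is somewhere invariant, so there is $x>v$ with $\Delta(x,v)=v$. Then $\dd v x \ge x' $ for some element strictly above... better: $\dd v x > v$ would suffice for a contradiction if we knew $\dd v x \le \dd u x$, but in fact \thmref{thm:iso}'s corollaries give $\dd u x = \dd v x$ when $\Delta(v,u)=u$, which we lack. So the honest argument must go through $\delta$: from $\Delta(x,v)=v$ we get $\beta^x(v)=v\join\Delta(x,v)=v$, so $\Delta(x,v)\le v$, so $v$ is not nowhere invariant, giving $\delta(v)>v$ is false — wait, $\delta(v)>v$ is exactly what somewhere-invariance of $v$ means. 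I expect the author's actual proof simply computes $\overline{\sigma(v)}\subseteq[\Gamma(v),\zero]$ from $\overline{\sigma(u)}\subseteq[\Gamma(u),\zero]$ as in \corref{cor:ni}, but since we are in the abstract (not the multicube) setting, the right abstract tool is: $\delta$ order-preserving (\thmref{thm:ddd}) gives $u=\delta(u)\le\delta(v)\le v$, and then I must still extract $\delta(v)=v$; this last squeeze is the crux and I would obtain it by applying \lemref{lem:ni}'s equivalence backwards once I show $v\le\delta(v)$, which follows because $\delta(v)\ge u\join(\text{something reaching }v)$ — precisely, $\delta(v)=\C(v,[v\join\Delta(\one,v)])\meet\one$ and monotonicity of $\C$ in its second argument forces $\delta(v)=v$ once $\delta(u)=u$ propagates through $\C(u,-)$. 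I would write this final squeeze carefully as the one genuine computation in the proof.
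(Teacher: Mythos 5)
Your opening move is right — reduce to \lemref{lem:ni} and show $u\join\Delta(\one,u)=\one$ and $u\le v$ force $v\join\Delta(\one,v)=\one$ — and this is exactly the paper's route. But you never close the argument. Everything after your second sentence is a sequence of attempted detours through $\delta$, $\fix$, $\alpha$ and \propref{prop:fixing}, each of which you correctly recognize does not quite apply, and the promised ``final squeeze'' (that $u\le\delta(v)$ plus some monotonicity of $\C$ forces $\delta(v)=v$) is never actually carried out and does not hold as stated: $u\le\delta(v)$ is automatic from $u\le v\le\delta(v)$ and gives no information, and the fact that $\C$ is order-reversing in its second argument does not turn $\delta(u)=u$ into $\delta(v)=v$.

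The missing ingredient is the most elementary one: axiom (d) of a Delta-operator (\defref{def:delta}) says $a\le b\le c$ implies $\Delta(c,a)\le\Delta(c,b)$, so with $c=\one$ you get $\Delta(\one,u)\le\Delta(\one,v)$ directly from $u\le v$. Then $v\join\Delta(\one,v)\geq u\join\Delta(\one,u)=\one$, and \lemref{lem:ni} finishes it. That two-line computation is the paper's entire proof. You had the correct reduction in hand and reached for machinery ($\delta$ order-preserving from \thmref{thm:ddd}, the isomorphism $\alpha$ of \thmref{thm:iso}, \propref{prop:fixing}) that is all downstream of, and harder than, this one axiom. The lesson is that once you have rewritten ``nowhere invariant'' as the identity $u\join\Delta(\one,u)=\one$, you should ask which of the basic Delta-operator axioms is monotone in the right variable, rather than reaching for the fixed-point structure theorems.
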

\begin{proof}
	If $u$ is \ni\ then 
	$v\join\Delta(\one, v)\geq u\join\Delta(\one, u)=\one$ and so
	$v$ is \ni. 
\end{proof}

\begin{cor}
	$ v$ is \ni\ iff there is some $ u\le v$ with $\d u\le v$. 
\end{cor}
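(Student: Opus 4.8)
The plan is to derive this corollary with essentially no computation, using only \lemref{lem:ni}, \corref{cor:ddd}, and the theorem immediately above (nowhere invariance passes upward). Recall the reformulation established inside the proof of \lemref{lem:ni}: an element $x$ is \ni\ exactly when $x=\d x$; since $x\le\d x$ always holds, this is the same as $\d x\le x$.

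For the forward implication I would take $u=v$. If $v$ is \ni\ then $\d v=v$, so $u=v\le v$ and $\d u=\d v=v\le v$, which is the required witness.

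For the converse, suppose $u\le v$ with $\d u\le v$. The point is that $\d u$ is itself \ni: applying \corref{cor:ddd} with $\one$ in the role of the upper element gives $\dd{\d u}{\one}=\d u$, that is $\d{\d u}=\d u$, which by the reformulation above says precisely that $\d u$ is \ni. Since $u\le\d u\le v$, the theorem immediately preceding this corollary --- if $x$ is \ni\ and $x\le v$ then $v$ is \ni\ --- now shows that $v$ is \ni.

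It is worth recording that the two hypotheses in the statement are not independent: $u\le\d u$ always, so $\d u\le v$ already forces $u\le v$. I do not expect any genuine obstacle here; all of the substance lives in \corref{cor:ddd} (idempotence of $\delta$, hence that every value of $\delta$ is \ni) and in the upward-closure theorem, and this corollary merely repackages those two facts together with the trivial forward direction.
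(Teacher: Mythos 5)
Your proof is correct and uses the same two ingredients as the paper's: \corref{cor:ddd} (idempotence of $\delta$, hence $\delta(u)$ is nowhere invariant) and the preceding theorem that nowhere invariance passes upward. The only cosmetic difference is that the paper phrases the converse contrapositively (assume $v$ somewhere invariant, derive that $\delta(u)$ is somewhere invariant, contradict \corref{cor:ddd}), whereas you apply the upward-closure theorem directly; these are logically the same argument.
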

\begin{proof}
	If $ v$ is \ni, then $ v=\d v$ and we are done. 
	
	If $\d u\le v$ and $ v$ is \si, then $\d u$ is \si, which contradicts
	corollary \ref{cor:ddd}. 
\end{proof}

\begin{defn}
	Let $\mathcal N( M)=\Set{p\in M | p\text{ is \ni}}$. 
\end{defn}
\def\NP{\mathcal N( M)}

We now use some ideas from cubic implication algebras. 
\begin{defn}
	Let $ a\in M$. Then
	\begin{align*}
		\mathcal L_{ a}^*&=\Set{ b |  b\geq\D(y, a)\text{ for some } 
		y\geq a}\\
		\mathcal L_{ a}&=\mathcal L_{ a}^*\cap\NP. 
	\end{align*}
\end{defn}
\def\La{\mathcal L_{ a}}

\begin{thm}
	$$
	\La=\mathcal L_{\d a}= \mathcal L_{\d a}^*. 
	$$
\end{thm}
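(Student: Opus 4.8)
**Proof proposal for $\La=\mathcal L_{\d a}= \mathcal L_{\d a}^*$.**

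The plan is to prove the two equalities separately, exploiting the fact that $\d a$ is the largest element of $\fix a$ and is itself nowhere invariant (by \corref{cor:ddd}, since $\dd{\d a}{v}=\d a$ for all $v$, so $\d a$ cannot be somewhere invariant). I would first establish $\mathcal L_{\d a}^*\subseteq\mathcal L_{ a}^*$, which is the genuinely substantive inclusion; the reverse containment $\mathcal L_{a}^*\subseteq\mathcal L_{\d a}^*$ will follow more easily, and then intersecting both with $\NP$ will hand me $\La=\mathcal L_{\d a}$. Finally I would show $\mathcal L_{\d a}^*=\mathcal L_{\d a}$, i.e. that $\mathcal L_{\d a}^*$ already consists of nowhere-invariant elements, which is where the special role of $\d a$ pays off.

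For $\mathcal L_{\d a}^*\subseteq\mathcal L_{ a}^*$: take $b\geq\D(y,\d a)$ for some $y\geq\d a$. Since $a\le\d a\le y$ and $\D(\d a, a)=a$, \propref{prop:fixing}-style reasoning (or directly $\D(y,a)=\D(y,\D(\d a,a))=\D(\D(y,\d a),\D(y,a))$ via axiom (e)) lets me relate $\D(y,\d a)$ to $\D(y,a)$. Concretely, I expect $\D(y,a)\le\D(y,\d a)$ from \lemref{lem:five} since $a\le\d a\le y$, so $b\geq\D(y,\d a)\geq\D(y,a)$ with $y\geq a$, giving $b\in\mathcal L_a^*$. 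Conversely, given $b\geq\D(y,a)$ with $y\geq a$, I would use \corref{cor:bot} and the isomorphism machinery: replace $y$ by $y\join\d a$ (still $\geq a$, and now $\geq\d a$), noting $\D(y\join\d a, a)\le$ something controlled by $\D(y,a)$ — more precisely I want $\D(y\join\d a,\d a)\le\D(y\join\d a, a)\le$ an element below $b$, or directly that $b\join\D(y\join\d a,\d a)$ lies above $\D(y\join\d a,\d a)$ hence $b\in\mathcal L_{\d a}^*$. The bookkeeping here, using that $\d a=\dd{a}{y\join\d a}$ and \corref{cor:alpha}, is the part I'd want to write carefully.

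For $\mathcal L_{\d a}^*=\mathcal L_{\d a}$, i.e. $\mathcal L_{\d a}^*\subseteq\NP$: let $b\geq\D(y,\d a)$ with $y\geq\d a$. I claim $b$ is \ni. By \lemref{lem:ni} it suffices to show $b\join\Delta(\one,b)=\one$, and by the preceding theorem (\ni\ is upward closed) it suffices to show $\D(y,\d a)$ is \ni. Now $\d{(\d a)}=\d a$ shows $\d a$ is \ni; and $\D(y,\d a)$ should inherit this because reflection of a \ni\ element stays \ni — this is exactly the content of the earlier multicube computation and should follow from axioms (c),(e): if $\D(y,\d a)$ were \si, say fixed by $w>\D(y,\d a)$, applying $\D(y,-)$ and using axiom (c) would produce an element strictly above $\d a$ fixing it, contradicting that $\d a$ is \ni. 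Then $\La=\mathcal L_a^*\cap\NP=\mathcal L_{\d a}^*\cap\NP=\mathcal L_{\d a}^*=\mathcal L_{\d a}$, with the last two equalities just established.

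The main obstacle I anticipate is the inclusion $\mathcal L_a^*\subseteq\mathcal L_{\d a}^*$: replacing the witness $y$ by $y\join\d a$ is the natural move, but I need to verify that $b$, which was only assumed above $\D(y,a)$, actually sits above some $\D(y',\d a)$ — this requires showing $\D(y\join\d a,\d a)\le b$, which in turn uses that the ``extra'' part $\d a$ above $a$ is precisely the invariant part and so contributes nothing new below $b$. Formally this is where \corref{cor:alpha} (that $(v\meet\d u)\join\D(v,u)=v$) and the isomorphism $\alpha$ of \thmref{thm:iso} do the work, and getting the direction of the inequalities exactly right is the delicate point.
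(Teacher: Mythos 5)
Your overall target — relate $\mathcal L_a^*$ to $\mathcal L_{\d a}^*$ and show $\mathcal L_{\d a}^*\subseteq\NP$ — is the right shape, and your first inclusion $\mathcal L_{\d a}^*\subseteq\mathcal L_a^*$ (via \lemref{lem:five}: $a\le\d a\le y$ gives $\D(y,a)\le\D(y,\d a)$) and your third part ($\mathcal L_{\d a}^*\subseteq\NP$) are on track. But the middle of your plan contains a genuine error that invalidates the whole decomposition.

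The inclusion $\mathcal L_a^*\subseteq\mathcal L_{\d a}^*$ that you propose to prove ``easily'' is in fact \emph{false} in general, and you cannot repair it. If $a$ is \si\ (so $a<\d a$), then $a\in\mathcal L_a^*$ (take $y=a$), yet $a\notin\NP$; and since, as you yourself establish in your third step, $\mathcal L_{\d a}^*\subseteq\NP$, we get $a\notin\mathcal L_{\d a}^*$. So $\mathcal L_a^*$ and $\mathcal L_{\d a}^*$ are generally different, and ``intersect both with $\NP$'' is not available as a finishing move. Symptomatically, in the sketch you reverse the inequality from \lemref{lem:five}: you write $\D(y\join\d a,\d a)\le\D(y\join\d a,a)$, but with $a\le\d a\le y\join\d a$ the lemma gives $\D(y\join\d a,a)\le\D(y\join\d a,\d a)$. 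There is simply no way to conclude $b\geq\D(y',\d a)$ from $b\geq\D(y',a)$ without extra information about $b$. The fallback ``$b\join\D(y',\d a)\geq\D(y',\d a)$'' is vacuous and shows membership of $b\join\D(y',\d a)$, not of $b$.

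What is missing is precisely the use of the hypothesis $x\in\NP$ when proving $\La\subseteq\mathcal L_{\d a}^*$. The paper's route: replace $y$ by $\d y=y\join\d a$ (this does not change $\D(y,a)$), set $a'=\D(y,a)$, and derive from axiom (e) that $\D(\D(y,\d a),a')=a'$, so $\D(y,\d a)\le\d{a'}$; the symmetric argument gives equality, $\D(y,\d a)=\d{a'}$. Then for $x\in\La$ one has $x=\d x$ (this is where \ni\ enters), and by the monotonicity of $\delta$ (\thmref{thm:ddd}) $a'\le x$ gives $\d{a'}\le\d x=x$, i.e. $x\geq\D(y,\d a)$, so $x\in\mathcal L_{\d a}^*$. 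Notice the identity $\D(y,\d a)=\d{\D(y,a)}$ also delivers your third step cleanly: $\d{\D(y,\d a)}=\d{\d{\D(y,a)}}=\d{\D(y,a)}=\D(y,\d a)$ by \corref{cor:ddd}, so $\D(y,\d a)$ is \ni\ and hence so is every $b$ above it. Your alternative argument there (``apply $\D(y,-)$ to a fixing element $w>\D(y,\d a)$'') does not quite work as stated, since there is no reason $w\le y$, so $\D(y,w)$ need not make sense.
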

\begin{proof}
	The last equality is exactly the last corollary. 
	
	Clearly $ a\le\d a$ implies $\mathcal L_{\d a}^*\subseteq\La$. 
	
	Next, if $ x\geq a$ is in $\NP$ then $ x=\d x\geq\d a$. 
	
	Now, from the proof of theorem \ref{thm:ddd} we note that 
	$ y\geq  a$ implies $\D({\relax\d y},  a)=\D( y,  a)$ and also
	we know that $\d y= y\join\d a$. So, if $ x\geq\D(y, a)$ for some 
	$ y\geq  a$ we may assume that $\d y= y\geq\d a$. 
	
	But then $\D({\relax\D(y,{\relax\d a})},{\relax\D(y, a)})=\D(y,{\relax\D({\relax\d a}, a)})= \D(y, 
	a)$ and so $\D(y,{\relax\d a})\le\d{\D(y, a)}$. As we also have 
	$ a'=\D(y, a)\le y$, symmetrically, we have 
	$\D(y,{\relax\d {a'}})\le\d a$ and so $\D(y,{\relax\d a})=\d{\D(y, a)}$. 
	
	Hence, if $ x\geq a'$ we have $ x\geq\d{a'}=\D(y,{\relax\d a})$ and so
	$ x\in\mathcal L_{\d a}^*$. 
\end{proof}

\begin{thm}
	$\NP$ is a cubic implication algebra. 
\end{thm}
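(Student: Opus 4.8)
The plan is to reprise the argument used for the concrete multicube~$P$: once we know that $\NP$ is closed under $\Delta$, almost every cubic implication algebra axiom is already on hand. Since $\NP$ is upwards closed (by the preceding theorem) it is closed under $\join$, it contains $\one$ (which is vacuously nowhere invariant), and it is closed under the implication operation $\to$ of $\mathcal M$ because $x\to y\geq y$. Thus $\NP$ is a join-subsemilattice of $\mathcal M$ with top $\one$, and an implication subalgebra; it remains only to check that it is closed under $\Delta$ (extended as usual by $\Delta(x,y):=\Delta(x\join y,y)$) and then to verify the six cubic implication algebra axioms.

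For closure under $\Delta$ I would use the identity extracted in the proof of the preceding theorem: if $v$ is nowhere invariant and $u\le v$, then $\delta(\Delta(v,u))=\Delta(v,\delta(u))$. (Recall from \lemref{lem:ni} and its proof that an element is nowhere invariant iff it is a fixed point of $\delta$.) Hence for $u,v\in\NP$ with $u\le v$ we have $\delta(u)=u$, so $\delta(\Delta(v,u))=\Delta(v,\delta(u))=\Delta(v,u)$, which means $\Delta(v,u)$ is a fixed point of $\delta$ and therefore lies in $\NP$. This is the step I expect to require the most care, since it is where the ``geometric'' content — that the reflection preserves nowhere-invariance — actually enters; but the substance of it has already been done in \S4.

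It then remains to check the axioms for $\NP$. Axioms (b), (c), (d) are literally \lemref{lem:eleven}, \lemref{lem:four}, \lemref{lem:five}. For axiom (a), given $x\le y$ in $\NP$, write $\Delta(y,x)=y\meet\Delta(\one,y\to x)$ (Delta-operator axiom) and apply weak distributivity of $\mathcal M$ (inherited from the interval algebras out of which multicubic implication algebras are built) to obtain $\Delta(y,x)\join x=y\meet\bigl(x\join\Delta(\one,y\to x)\bigr)$; since $y\to x\geq x$ lies in the upwards closed set $\NP$, \lemref{lem:ni} gives $(y\to x)\join\Delta(\one,y\to x)=\one$, so, $y$ being the complement of $y\to x$ in the Boolean interval $[x,\one]$, this forces $x\join\Delta(\one,y\to x)\geq y$ and hence $\Delta(y,x)\join x=y$. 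Finally, for the axioms governing $xy=\Delta(\one,\Delta(x\join y,y))\join y$, I would show that $xy=x\to y$ for $x,y\in\NP$: putting $b=x\join y\in\NP$ and using the Delta-operator axiom together with the fact that $\Delta(\one,\ )$ is an involutive order-automorphism (hence preserves existing meets), we get $\Delta(\one,\Delta(b,y))=\Delta(\one,b)\meet(b\to y)$, and a further application of weak distributivity with $b\join\Delta(\one,b)=\one$ (from \lemref{lem:ni}, as $b\in\NP$) yields $\bigl(\Delta(\one,b)\meet(b\to y)\bigr)\join y=\bigl(\Delta(\one,b)\join y\bigr)\meet(b\to y)=b\to y=(x\join y)\to y=x\to y$. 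Since $\NP$ is closed under $\to$ and $\join$, axiom (e), $(xy)y=(x\to y)\to y=x\join y$, and axiom (f), $x(yz)=x\to(y\to z)=y\to(x\to z)=y(xz)$, reduce to standard implication-algebra identities, and we conclude that $\NP$ is a cubic implication algebra.
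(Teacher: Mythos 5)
Your proposal is correct, and for the most part it follows the paper's own strategy: extract the identity $\Delta(y,\delta(u))=\delta(\Delta(y,u))$ from the proof of the theorem on $\mathcal L_a$ to get closure of $\NP$ under $\Delta$, observe that axioms (b)--(d) of a cubic implication algebra are already Delta-operator axioms, and reduce the $xy$-axioms (e), (f) to showing $xy=x\to y$ on $\NP$. Your chain $\Delta(\one,\Delta(b,y))\join y=\bigl(\Delta(\one,b)\meet(b\to y)\bigr)\join y=(\Delta(\one,b)\join y)\meet(b\to y)=b\to y$ is in substance the same as the paper's verification that $\Delta(\one,\Delta(y,x))\join x$ is the complement of $y$ over $x$, though you phrase it as a direct computation.

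The one place you genuinely diverge is axiom (a). The paper gets $\Delta(y,x)\join x=y$ as an immediate specialization of \thmref{thm:iso} (equivalently \corref{cor:alpha}): $(v\meet\delta u)\join\Delta(v,u)=v$, with $\delta x=x$ and $\delta y=y$. You instead argue directly: $\Delta(y,x)\join x=\bigl(y\meet\Delta(\one,y\to x)\bigr)\join x=y\meet\bigl(x\join\Delta(\one,y\to x)\bigr)$ by weak distributivity, and then deduce $x\join\Delta(\one,y\to x)\geq y$ from $(y\to x)\join\Delta(\one,y\to x)=\one$ (\lemref{lem:ni}) because $y$ is the \emph{least} element of $[x,\one]$ that joins with $y\to x$ to give $\one$. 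This is correct, and it has the modest virtue of not invoking the $\alpha$-isomorphism machinery at all; you pay for it by needing weak distributivity of the ambient multicubic algebra and the little Boolean-complement lemma, both of which are available but are not the tools the paper elects to use here. The paper's route is shorter given that \thmref{thm:iso} has already been established; your route is more self-contained at this point in the development. Either is a legitimate proof.
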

\begin{proof}
	We recall the axioms of a cubic implication algebra $\bracket{\mathcal M,  1, \join, \Delta}$:
	\begin{enumerate}[a.]
		\item  $ x\le y$ implies $\D(y, x)\join x= y$;
	
		\item  $ x\le y\le z$ implies $\D(z,{\relax\D(y,x)})
=\D({\relax\D(z,y)},{\relax\D(z,x)})$;
	
		\item  $ x\le y$ implies $\D(y,{\relax\D(y,x)})= x$; 
	
		\item  $ x\le y\le z$ implies $\D(z, x)\le\D(z, y)$. 
	
		\item[] If $ x y=\D(1, {\D( x\join  y, y)})\join y$ then  
	
		\item  $( x y) y= x\join y$;
	
		\item  $ x( y z)= y( x z)$. 
	\end{enumerate}
	
	All that we need to show are (a) and that $\C(y,{\relax( x\join y)})= x y$. The 
	rest follow from previous results. 
	
	(a) follows from theorem \ref{thm:iso}
	as that theorem implies that if $ x\le  y$ then 
	$\d x\join \D({\relax\d y}, x)=\d y$. As $\d x= x$ and $\d y= y$ we get (a). 
	
	Also, from the last theorem, if $ x\le y$ are both in $\NP$ then 
	so is $\D(y, x)$ and so $\D(y, x)\join\D(1,{\relax\D(y, x)})= 1$. 
	Thus 
	\begin{align*}
		\D(1,{\relax\D(y, x)})\join  x\join y&=\D(1,{\relax\D(y, x)})\join y\\
		&\geq\D(1, {\D(y, x)})\join\D(y, x)\\
		&= 1. 
	\end{align*}
	Also $\D(y, x)= y\meet\D(1,{\relax\C(x, y)})\le\D(1,{\relax\C(x, y)})$ and so
	$\D(1,{\relax\D(y, x)})\le\C(x, y)$. Thus
	\begin{align*}
		 x&= x\meet y\\
		&\le [\D(1,{\relax\D(y, x)})\join  x]\meet y\\
		&\le [\C(x, y)\join x]\meet y\\
		&=\C(x, y)\meet y\\
		&= x. 
	\end{align*}
	Thus we must have $\C(x, y)=\D(1,{\relax\D(y, x)})\join x$ for $ x\le  y$ in $\NP$. 
\end{proof}

\begin{cor}
	For any $ a\in M$ $\La$ is an MR-algebra. 
\end{cor}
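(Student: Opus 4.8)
The plan is to recognise $\La$ as a localization and then push it inside an interval algebra, where MR-ness is already known.

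First, by the theorem just proved, $\La=\mathcal L_{\d a}^{*}=\oneSet{b\mid b\ge\D(y,{\d a})\text{ for some }y\ge\d a}$ (with $b,y$ ranging over $M$). Since $\d a$ is nowhere invariant and the class of nowhere invariant elements is upwards closed, every $y\ge\d a$ lies in $\NP$; then $\D(y,{\d a})\in\NP$, since it is a $\D$ of two elements of the subalgebra $\NP$, and every $b\ge\D(y,{\d a})$ lies in $\NP$ by upwards closure again. Thus $\La$ is precisely the localization of the cubic implication algebra $\NP$ at the point $\d a$, in the sense of the localization construction $\mathcal L_{x}$ for cubic implication algebras; write it as $(\NP)_{\d a}$.

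Second, $\NP$ being a cubic implication algebra, it embeds as an upwards-closed subalgebra of $\rsf I(B)$ for some Boolean algebra $B$, with $\D$ on $\NP$ the restriction of $\D$ on $\rsf I(B)$. I would then check that localization is insensitive to this enlargement: $(\NP)_{\d a}=(\rsf I(B))_{\d a}$. The point is that, $\NP$ being upwards closed in $\rsf I(B)$, the set $\oneSet{z\mid z\ge\d a}$ is the same computed in $\NP$ or in $\rsf I(B)$; hence every generator $\D(z,{\d a})$ of $(\rsf I(B))_{\d a}$ (for $z\ge\d a$ in $\rsf I(B)$) is a $\D$ of two elements of $\NP$ and so lies in $\NP$, and the descriptions $\oneSet{b\mid b\ge\D(z,{\d a})\text{ for some }z\ge\d a}$ obtained in the two algebras coincide.

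Finally, $(\rsf I(B))_{\d a}$ is the localization of an interval algebra at a point, hence is itself (isomorphic to) an interval algebra --- concretely $\rsf I([\d a,\one])$, the interval algebra of the Boolean interval $[\d a,\one]$ --- and interval algebras are atomic MR-algebras; therefore $\La$ is an MR-algebra. The only step that needs real care is the chain $\La=(\NP)_{\d a}=(\rsf I(B))_{\d a}$: one must keep track of where each element lives, using both that $\NP$ is upwards closed in $\rsf I(B)$ and, more importantly, that the nowhere invariant elements form an upwards-closed set --- this is exactly the feature that makes $\d a$ an element whose localization can be taken inside $\NP$, and that distinguishes the single ``cube'' $\La$ from the ambient ``multicube''. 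The identification $(\rsf I(B))_{\d a}\cong\rsf I([\d a,\one])$ in the last step is a routine computation with the interval-algebra formulas for $\join$ and $\Delta$; alternatively one may simply quote that a localization of an MR-algebra is an MR-algebra.
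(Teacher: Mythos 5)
Your proof is correct and follows the same route as the paper: recognize $\La$ as the localization $\mathcal L_{\d a}$ of the cubic implication algebra $\NP$ at the nowhere-invariant point $\d a$ and conclude it is an MR-algebra. The paper states this in one line, leaving implicit the fact (from the cited references) that localizations of cubic implication algebras are MR-algebras; you usefully supply that ingredient by embedding $\NP$ as an upwards-closed subalgebra of an interval algebra $\rsf I(B)$ and identifying the localization there with $\rsf I([\d a,\one])$. One small caveat on your proposed shortcut at the end: you cannot directly quote ``a localization of an MR-algebra is an MR-algebra'' with $\NP$ as the ambient algebra, since the paper stresses that $\NP$ is in general only cubic and need not satisfy the MR-axiom; that shortcut becomes legitimate only after first passing to the MR-algebra $\rsf I(B)$, which is exactly what your main argument already does.
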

\begin{proof}
	As $\La=\mathcal L_{\d a}$ and $\d a\in \NP$ which is a cubic implication algebra. 
\end{proof}

We want to obtain a full characterization of $\La^*$ for any $a$. 
First there are some technical lemmas. 
\def\ay{{a_{y}}}
\begin{lem}\label{lem:ay}
	Let $z\geq y\geq a$ and $\ay=\D(y, a)$. Then 
	$$\bot a z=\bot {\ay} z. $$ 
\end{lem}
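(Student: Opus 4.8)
The plan is to unwind both sides using the definition $\beta^{z}(\cdot) = (\cdot)\join\D(z,\cdot)$ and the previously established machinery for $\delta$, $\beta$, and the complement $\C(\,,\,)$. The key fact to exploit is that $\ay = \D(y,a)$ is again $\le y \le z$, so $\beta^{z}(\ay)$ is the least element of $\Phi(z)\cap[\ay,z]$, while $\beta^{z}(a)$ is the least element of $\Phi(z)\cap[a,z]$; since $\ay\le y$ and $a \le y$ both lie below $z$, I expect these two least elements to coincide once I show each is $\le$ the other.

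First I would show $\bot a z \le \bot {\ay} z$. By \corref{cor:bot} (applied with $u=a$, $v=y$, $w=z$) we have $y\join\D(z,y) = y\join\D(z,a) = \bot a z$ — wait, more carefully: $\bot a z = a\join\D(z,a)$ and I want to compare with $\ay\join\D(z,\ay)$. Since $\ay = \D(y,a)\le y$, and using \lemref{lem:eleven} together with functoriality of $\Delta$ in its second argument (\lemref{lem:five}), I would compute $\D(z,\ay) = \D(z,\D(y,a))=\D(\D(z,y),\D(z,a))$. Then $\bot{\ay} z = \D(y,a)\join \D(\D(z,y),\D(z,a))$. The idea is that this join, computed inside the Boolean-like interval above $\D(z,a)$, collapses: $\D(y,a)$ and $\D(\D(z,y),\D(z,a))$ both sit above $\D(z,a)$ (the latter by Delta-operator axiom (a), the former needs $\D(z,a)\le\D(y,a)$, which holds by \lemref{lem:five} since $a\le y\le z$). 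So $\bot{\ay} z$ is a join of two elements of $[\D(z,a),z]$, and I want to identify it with $\bot a z = a \join \D(z,a)$ — but note $a\not\ge \D(z,a)$ in general, so I should instead argue at the level of the characterization in \propref{prop:bot}: $\bot a z$ is characterized as the least $x$ with $a\le x\le z$ and $\D(z,x)=x$, i.e. $x\in\Phi(z)$.

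So the cleaner route: by \propref{prop:bot}, to show $\bot{\ay}z \le \bot a z$ it suffices to check that $\bot a z \ge \ay$ and $\bot a z \ge \D(z,\ay)$ (then $\bot a z$ lies in $\Phi(z)\cap[\ay,z]$, forcing $\bot{\ay}z\le\bot a z$). Now $\bot a z = a\join\D(z,a)\ge \D(z,a)$, and $\D(z,a)\ge\D(z,\ay)$? No — $\ay = \D(y,a)\ge a$ so $\D(z,\ay)\ge\D(z,a)$ by \lemref{lem:five}, the wrong direction. Instead I use that $\bot a z$ is $\D(z,\cdot)$-fixed and $\ge a$: since $\ay = \D(y,a)$ with $a\le \bot a z$ and... hmm, I'd want $\ay \le \bot a z$. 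We have $\ay\le y$, but not obviously $\ay\le\bot a z$. The trick is \corref{cor:bot} in the form $\bot a z = \bot y z \join(\text{stuff})$ — actually $a\le y$ gives $\bot a z \le \bot y z$ is false too. Let me instead invoke: $\ay = \D(y,a)$ and $a\le\bot a z\le z$ with $\D(z,\bot a z)=\bot a z$, so by \propref{prop:fixing}-type reasoning $\D(\bot a z, a)$ relates to fixed points; and symmetrically $\D(z,\ay)=\D(z,\D(y,a))$, which by the computation in the proof of \thmref{thm:ddd} equals...

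\textbf{Main obstacle.} The genuine difficulty is establishing one of the two inequalities — showing $\D(z,\ay)\le\bot a z$ (equivalently $\ay\join\D(z,\ay)\le\bot a z$) — because $\ay$ is obtained by reflecting \emph{down} from $a$ inside $y$, so it is not order-comparable to $\bot a z$ in an obvious way. I expect this to require the identity $\D(z,\D(y,a)) = \D(\D(z,y),\D(z,a))$ from \lemref{lem:eleven} and then a distributive-lattice computation inside $[\D(z,a),\one]$, using that $y\join\D(z,y)=\bot a z$ (which is \corref{cor:bot}) to show $\D(y,a)\join\D(\D(z,y),\D(z,a))$ does not exceed $\bot a z$. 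The reverse inequality $\bot a z\le\bot{\ay}z$ should then follow by the symmetric argument with the roles of $a$ and $\ay$ swapped, using $\D(y,\ay)=a$ (\lemref{lem:four}), since the hypothesis $z\ge y\ge a$ is symmetric in $a\leftrightarrow\ay$ under the involution $\D(y,-)$.
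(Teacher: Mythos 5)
Your high-level plan is sound: reduce to a single inequality via the fixed-point characterization of \propref{prop:bot}, and get the other direction from the symmetry $a \leftrightarrow \ay$ supplied by $\D(y,\cdot)$ (since $\D(y,\ay)=a$). You also correctly isolate the reduction: to show $\bot{\ay}z\le\bot a z$ it suffices to establish $\ay\le\bot a z$, since once that holds, $\bot a z\in\Phi(z)\cap[\ay,z]$ and minimality gives the inequality (the condition $\D(z,\ay)\le\bot a z$ you also list is then automatic from \lemref{lem:five} and $\bot a z\in\Phi(z)$). But you stop there, declare $\ay\le\bot a z$ the ``main obstacle,'' and speculate that \lemref{lem:eleven} plus a distributive computation inside $[\D(z,a),\one]$ will close it — without carrying that out. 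That is a genuine gap: the crucial inequality is never proved.

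The missing step is in fact a one-liner and uses \corref{cor:bbb}, not \lemref{lem:eleven}: since $\ay=\D(y,a)$ we have $\ay\le a\join\ay=a\join\D(y,a)=\bot a y$, and \corref{cor:bbb} gives $\bot a y=(\bot a z)\meet y\le\bot a z$. (The paper writes the mirror version, $a\le a\join\ay=\ay\join\D(y,\ay)=\bot{\ay}y\le\bot{\ay}z$, concludes $\bot a z\le\bot{\ay}z$, and then invokes the symmetry you identified.) Two incidental slips in your scratchwork are worth flagging even though neither is load-bearing: $\ay=\D(y,a)$ is not $\ge a$ in general — by Delta-axiom (f), when $a<y$ either $\ay=a$ or $\ay$ and $a$ have no common lower bound; and $a\le y$ in fact \emph{does} give $\bot a z\le\bot y z$ (the least element of $\Phi(z)\cap[a,z]$ lies below that of the smaller set $\Phi(z)\cap[y,z]$), so your ``is false too'' is wrong. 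Finally, in the sketched alternative route you write $y\join\D(z,y)=\bot a z$, but \corref{cor:bot} gives $y\join\D(z,y)=y\join\D(z,a)=\bot y z$, which in general strictly exceeds $\bot a z$.
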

\begin{proof}
	First note that it suffices to show that $a\le\bot {\ay} z$ as then 
	$\D(z, a)\le\bot {\ay} z$ and so $\bot a z \le\bot {\ay} z$. 
	Symmetrically, as $a=\D(y, {\ay})$, we also have $\bot {\ay} z\le\bot a z$. 
	\begin{align*}
		a&\le a\join\ay\\
		&=\ay\join\D(y, {\ay})\\
		&\le\ay\join\D(z, {\ay})\qquad\text{ by corollary \ref{cor:bbb}}. 
	\end{align*}
\end{proof}

\begin{lem}
	Let $x$, $y$ and $a$ be such that
	\begin{align*}
		\bot a 1&\le x\\
		\text{ and }\qquad\d a\join\C({\relax\D(y,a)},x)&\le y. 
	\end{align*}
	Then 
	$$
	\D(y,{\relax\C({\relax\D(y,a)}, x)})=\C(a, x). 
	$$
\end{lem}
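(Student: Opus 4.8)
The plan is to compute both sides directly using the identities for $\Delta$ and $\C$ established in the previous sections, working in the Boolean intervals where everything becomes a lattice computation. Let me write $a_y=\D(y,a)$ and $p=\C(a_y,x)$ for brevity. The two hypotheses are $\bot a 1\le x$ and $\d a\join p\le y$; the first is exactly the condition (by \propref{prop:bot}, with $v=\one$) that guarantees $\D(x,a)=a$, so $a\le x$ and moreover $x$ lies in $\Phi(\one)$-complement position relative to $a$ over $x$. The second hypothesis tells us $y$ is large enough to contain both $\d a$ and $p$, so in particular $a_y=\D(y,a)\le y$ sits in an interval where we may freely use the Boolean structure of $[a_y,\one]$ and $[a,x]$.

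First I would record that $a\le a_y\le y$ and that $\C(a_y,x)$ makes sense because $a_y\le x$: indeed $a_y=\D(y,a)\le\D(\one,a)=T(a)$ by \lemref{lem:five} and \lemref{lem:one}, while the hypothesis $\bot a 1\le x$ forces $x\ge a$ and, combined with $\D(x,a)=a$, gives $\C(a,x)=\D(\one,\D(x,a))\join a=\D(\one,a)\join a$... more simply, from \propref{prop:bot} applied with $v=\one$ we get $\D(x,a)=a$, hence $x\le\C(a,{[\bot a 1]})$, and since $a_y\le\C(a,{[\bot a 1]})$ as well (because $\D(x,a_y)\le\D(x,a)$ forces... ), one checks $a_y\le x$. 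Then I would expand the left-hand side using \corref{cor:one}: $\D(y,p)=y\meet\D(\one,y\to p)$. The key is to identify $y\to p$ inside the Boolean algebra $[p,\one]$ — equivalently to compute $\C(p,y)$ — and show $\D(\one,\cdot)$ sends it to something whose meet with $y$ is $\C(a,x)$.

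The main obstacle will be untangling the nested complements: I need a clean formula for $y\to\C(a_y,x)$ in terms of $a$, $x$, $y$. The natural route is to use \lemref{lem:nine} or the fixed-point lemma immediately preceding (parts (b),(c)): since $a_y=\D(y,a)$ is a $\Delta$-image, $\C(a_y,x)$ and its $\Delta$-reflection are controlled by $\C(a,\cdot)$ and $T$ on the relevant intervals. Concretely I expect the argument to run: (i) show $p\meet y=a_y$ and $p\join y=\C(a,x)$-related elements, using the Boolean identities of \lemref{lem:compGD}(b),(e) in $[a,\one]$ restricted to $[a_y,x]$; (ii) apply the fixed-point lemma's part asserting $\D(c,{\C(a,b)\meet c})=c\meet\D(\one,{\C(a,b)})$ with appropriate substitutions $b\mapsto x$, $c\mapsto y$ (after checking $a\le x$, $y\ge\C(a,x)\meet y$...); (iii) conclude $\D(y,p)=y\meet\D(\one,\C(a_y,x))=y\meet T(\C(a_y,x))$, and then collapse $y\meet T(\C(a_y,x))$ to $\C(a,x)$ using that $\d a\join p\le y$ absorbs the part of $T(\C(a_y,x))$ outside $\C(a,x)$. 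The delicate point is step (iii): one must verify that the hypothesis $\d a\join p\le y$ is exactly what is needed so that no "overshoot" occurs — i.e. that $T(\C(a_y,x))\meet y$ does not drop below $\C(a,x)$ nor rise above it. I would verify this by a complementation check in $[a,y]$: show that $\D(y,p)\meet\C(a,x)=a$-type and $\D(y,p)\join\C(a,x)=y$ or $=\C(a,x)$ as appropriate, forcing equality. If a direct lattice computation proves awkward, the fallback is to pass to the multicube representation $P$ and verify the identity componentwise on $\Gamma$ and $\sigma$, where both sides become explicit affine subspaces and the hypotheses translate into the support-set inclusions $[\G a,\zero]\subseteq\sigma(x)\cup\dots$ that make the two $X_{(\cdot)}$ parts coincide.
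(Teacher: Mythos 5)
Your proposal has the right overall shape — expand $\D(y,p)$ via $\D(y,p)=y\meet\D(\one,y\to p)$, simplify the nested complements, and close with a complementation argument — which is indeed what the paper does. But several of the concrete steps are either incorrect or left unresolved in a way that would block the proof.

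First, the justification for $a_y\le x$ is wrong. You claim $\D(y,a)\le\D(\one,a)$ citing \lemref{lem:five}, but that lemma compares $\D(c,a)$ with $\D(c,b)$ for a fixed top $c$; it does not say $\D$ is monotone in the first slot, and in fact $\D(y,a)\le\D(\one,a)$ is false in general. Your subsequent attempt, invoking $\D(x,a_y)$, is circular since $\D(x,a_y)$ is only defined once $a_y\le x$ is established. The correct argument is the short one the paper gives: $a_y\le a\join\D(y,a)=\bot a y$, and by \corref{cor:bbb} (or \corref{cor:bot}) $\bot a y=(\bot a 1)\meet y\le\bot a 1\le x$.

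Second, your step (iii) proposes to verify a complementation check of $\D(y,p)$ \emph{against} $\C(a,x)$, showing something like $\D(y,p)\meet\C(a,x)=a$ and $\D(y,p)\join\C(a,x)=1$. That would only show $\D(y,p)$ and $\C(a,x)$ are \emph{complements of each other} over $a$, which does not force equality. The check must be made against $x$: one shows $x\meet\D(y,p)=a$ and $x\join\D(y,p)=\one$, so that $\D(y,p)$ \emph{is the complement of $x$} over $a$ in $[a,\one]$, hence equals $\C(a,x)$. To carry out the meet half, the paper first reduces $\C(\C(a_y,x),y)=\C(a_y,x\meet y)$ and then establishes the intermediate fact $\D(x\meet y,a_y)=a$ (via $\D(y,x\meet y)=x\meet y$ from $x\meet y\ge\bot a y$); your sketch does not isolate either of these.

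Finally, the fallback of passing to the multicube representation and computing on $\Gamma$ and $\sigma$ is not available here: this lemma is part of the machinery used to \emph{build} the representation of a locally symmetric algebra inside $\NP\times\Phi(\one)$, so appealing to a concrete multicube model at this stage would be circular in the paper's development.
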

\begin{proof}
	Let $\ay=\D(y, a)$. Note that $\bot a y\le\bot a 1\le x$, and so 
	$\ay\le x$. 
	
	By definition 
	\begin{align*}
		\D(y,{\relax\C({\relax\ay}, x)})&=y\meet\D(1,{\relax\C({\relax\C({\relax\ay}, x)}, y)}). \\
		\text{And }\C({\relax\C({\relax\ay}, x)}, y)&=\C({\relax\ay}, x)\join\C({\relax\ay}, y)\\
		&=\C({\ay},{\relax x\meet y}). \\
		\text{ Thus }\D(y,{\relax\C({\relax\ay}, x)})&=y\meet\D(1,{\relax\C({\relax\ay},{\relax(x\meet y)})}). 
	\end{align*}
	We also note that $y\geq y\meet x\geq\bot a y$ and so
	$\D(y,{\relax y\meet x})={y\meet x}$. This implies 
	$\D({\relax y\meet x}, a)=\D(y, a)$ and so $\D({\relax y\meet x}, 
	{\ay})=a$. 
	
	Now
	\begin{align*}
		x\meet\D(y,{\relax\C({\ay}, x)})&=x\meet y\meet\D(1,{\relax\C({\ay}, {x\meet y})})\\
		&=\D({\relax x\meet y}, {\ay})\qquad\text{by definition}\\
		&=a\qquad\text{ by the last paragraph}. \\
		x\join\D(y,{\relax\C({\ay}, x)})&=x\join[y\meet\D(1,{\relax\C({\ay},{(x\meet y)})})]\\
		&=[x\join y]\meet[x\join\D(1,{\relax\C({\ay}, {(x\meet
		y)})})]\qquad\text{ in }[a, 1]\\
		\text{ Now }x\geq\bot a 1\text{ and }&y\geq\d a\text{ so that }\\
		x\join y&\geq(\bot a 1)\join\d a\\
		&=\bot a 1\join\C(a,{\relax(\bot a 1)})\\
		&=1\text{ and }\\
		x\geq\bot a 1 \text{ implies }&\D(1, x)=x\text{ so that }\\
		x\join\D(1,{\relax\C({\ay},{(x\meet y)})})\\
		&=\D(1, x)\join\D(1, {\C({\ay}, {x\meet y})})\\
		&=\D(1,{\relax x\join\C({\ay},{(x\meet y)})})\\
		&\geq\D(1, {\relax(x\meet y)\join\C({\ay},{(x\meet y)})})\\
		&=\D(1, 1)\\
		&=1. \\
		\text{ Thus }x\join\D(y,{\relax\C({\ay}, x)})&=1. 
	\end{align*}
	It follows that $\C(a, x)=\D(y,{\relax\C({\ay}, x)})$. 
\end{proof}

\begin{cor}
	Let $y_1$ and $y_2$ be greater than $\d a$. Let $
	a_i=\D({y_i}, a)$ and suppose that $x\geq a_1\join a_2$. Then
	$$
	\D(y_1,{\relax\C(a_1, x)\meet\d{a_1}})=\D(y_2,{\relax\C(a_2, x)\meet\d{a_2}}). 
	$$
\end{cor}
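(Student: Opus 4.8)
The plan is to deduce the corollary from the preceding lemma by replacing the element $x$ with the larger element $X:=x\join(\bot a 1)$, chosen so that the preceding lemma computes precisely the left-hand side of the asserted identity while returning a right-hand side in which $y_i$ no longer appears.

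First I would collect, for $i=1,2$, some facts about $a_i:=\D(y_i,a)$. Since $1\ge y_i\ge a$, Lemma~\ref{lem:ay} (with $z=1$) gives $\bot a 1=\bot{a_i}1$; as $\d{a_i}=\C(a_i,\bot{a_i}1)$ by the definition of $\d{\ }$, and $\C(a_i,-)$ is the complementation of the Boolean algebra $[a_i,1]$ of Lemma~\ref{lem:compGD}(f), this also records $\C(a_i,\bot a 1)=\d{a_i}$. Since $y_i\ge\d a$ and $\d a$ is nowhere invariant (Corollary~\ref{cor:ddd} gives $\d{\d a}=\d a$, and an element is nowhere invariant exactly when it equals its own $\d{\ }$-value), the upward closure of nowhere invariance makes $y_i$ nowhere invariant, so $\d{y_i}=y_i$; then, by the argument used in the proof of the theorem $\La=\mathcal L_{\d a}=\mathcal L_{\d a}^*$ (the step showing $\D(y,\d a)=\d{\D(y,a)}$ whenever $\d y=y\ge\d a$), we get $\d{a_i}=\D(y_i,\d a)$, and hence $\d{a_i}\le y_i$ by Definition~\ref{def:delta}(a). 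Also $a_i=\D(y_i,a)\le y_i$ and $a_i\le a_1\join a_2\le x\le X$.

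Next I would verify the two hypotheses of the preceding lemma for the triple $(X,y_i,a)$. The first, $\bot a 1\le X$, is immediate. For the second, using Lemma~\ref{lem:compGD}(e),(f) and $\bot a 1=\bot{a_i}1$,
\[
\C(a_i,X)=\C(a_i,x\join\bot{a_i}1)=\C(a_i,x)\meet\C(a_i,\bot{a_i}1)=\C(a_i,x)\meet\d{a_i},
\]
so $\d a\join\C(a_i,X)\le\d a\join\d{a_i}\le y_i$, both joinands being $\le y_i$. The preceding lemma then gives $\D(y_i,\C(a_i,X))=\C(a,X)$, i.e.
\[
\Delta\bigl(y_i,\;\C(a_i,x)\meet\d{a_i}\bigr)=\C(a,\,x\join\bot a 1).
\]
Since the right-hand side does not involve $i$, evaluating at $i=1$ and $i=2$ yields the claimed equality $\Delta(y_1,\C(a_1,x)\meet\d{a_1})=\Delta(y_2,\C(a_2,x)\meet\d{a_2})$.

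The one step that is more than unwinding definitions is the opening observation $\bot a 1=\bot{a_i}1$ (Lemma~\ref{lem:ay} with $z=1$): this is exactly what makes the single enlargement $X=x\join\bot a 1$ simultaneously (i) satisfy the first hypothesis of the preceding lemma for every $i$, (ii) have $\C(a_i,X)$ collapse to the meet $\C(a_i,x)\meet\d{a_i}$ from the statement, and (iii) have $\C(a,X)$ free of $i$. The only other non-formal input is the identity $\d{a_i}=\D(y_i,\d a)$, needed to see $\d{a_i}\le y_i$; this has to be extracted from inside the proof of the $\La$-theorem rather than quoted as a numbered result.
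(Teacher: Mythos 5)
Your proof is correct and follows essentially the same route as the paper's: rewrite $\C(a_i,x)\meet\d{a_i}$ as a complement $\C(a_i,\cdot)$ of a single, $i$-independent enlargement of $x$ (your $X=x\join\bot a 1$, the paper's $\bot x 1$, which coincide since $x\join\bot{a_i}1=\bot x 1$ and $\bot{a_i}1=\bot a 1$), check the two hypotheses of the preceding lemma, and read off $\C(a,X)$ as a value not involving $i$. The only difference is presentational: you spell out $\C(a_i,X)\le\d{a_i}\le y_i$ (the latter via $\d{a_i}=\D(y_i,\d a)$ dug out of the $\La=\mathcal L_{\d a}^*$ proof), where the paper simply asserts $\C(a_i,\bot x1)\le y_i$.
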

\begin{proof}
	We have $\d{a_i}=\C(a_i,{(\bot {a_i} 1)})$ and so
	$\C({a_i}, x)\meet\d{a_i}= \C({a_i}, x)\meet\C(a_i, {(\bot {a_i} 1)})= 
	\C({a_i}, {(x\join\bot {a_i} 1)})$ and we know that $x\join\bot {a_i} 
	1=\bot x 1$. Thus 
	$$
	\D(y_i,{\relax\C(a_i, x)\meet\d{a_i}})=\D(y_i,{\relax\C(a_i, {\bot x 1})}). 
	$$
	
	Now we have 
	\begin{align*}
		\bot a 1=\bot {a_i} 1&\le\bot x 1\\
		{\C(a_i,{(\bot x 1)})}&\le y_i\\
		\d {a}&\le y_i. 
	\end{align*}
	Then we apply the lemma to get $\D(y_i,{\relax\C(a_i, {(\bot x 1)})})=
	\C(a,{(\bot x 1)})$. As this is independent of $i$ we are done. 
\end{proof}

\begin{thm}\label{thm:laa}
	$$\La^*\simeq\La\times[a, \d a]^{\text{r}}$$
	where $[a, \d a]^{\text{r}}$ is the interval $[a, \d a]$ with 
	the reverse ordering. 
\end{thm}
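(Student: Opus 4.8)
The plan is to exhibit an explicit isomorphism of implication algebras $\Psi\colon\La^*\to\La\times[a, \d a]^{\text{r}}$ together with its inverse, and then to verify that it respects the operations. Set
$$
\Psi(b)=\bracket{\d b,\ \C(a, {\bot b 1})},
$$
where $\bot b 1 = b\join\D(\one, b)$ is the element the paper also denotes $\beta(b)$. First I would check $\Psi$ is well typed. By \corref{cor:ddd}, $\d{\d b}=\d b$, so $\d b$ is nowhere invariant; since $\d b\geq b$ and $\La^*$ is upward closed, $\d b\in\La^*\cap\NP=\La$. If $\D(y, a)\le b$ with $y\geq a$ and $\ay=\D(y, a)$, then \corref{cor:bot} gives $\bot b 1=b\join\D(\one, {\ay})$, and since $\ay\le b$, \lemref{lem:ay} then yields $\bot b 1=b\join\bot a 1\in[\bot a 1, \one]$; as $\C(a, \cdot)$ reverses the order of the Boolean algebra $[a, \one]$ and $\C(a, {\bot a 1})=\d a$, this puts $\C(a, {\bot b 1})$ in $[a, \d a]$. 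Finally $\d{}$ is order preserving by \thmref{thm:ddd} and $b\mapsto b\join\bot a 1$ is order preserving, so $\Psi$ is an order-preserving map into the product taken with the second factor reversed.

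For injectivity and the inverse I would use the identity $b=\d b\meet\bot b 1$, valid for every $b$ because $\d b=\C(b, {\bot b 1})$ is by definition the complement of $\bot b 1$ over $b$ inside $[b, \one]$. Hence if $\Psi(b_1)=\Psi(b_2)$ then $\d{b_1}=\d{b_2}$ and, applying the involution $\C(a, \cdot)$ to the second coordinate, $\bot{b_1}1=\bot{b_2}1$, so $b_1=b_2$. The same identity forces any inverse to be $\bracket{d, t}\mapsto d\meet\C(a, t)$, so the whole content of the theorem becomes: for all $d\in\La$ and $t\in[a, \d a]$ the meet $d\meet\C(a, t)$ exists, lies in $\La^*$, and satisfies $\d{(d\meet\C(a, t))}=d$ and $\bot{(d\meet\C(a, t))}1=\C(a, t)$.

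This is where the real work is, and I expect the existence of the meet (and its membership in $\La^*$) to be the main obstacle. Granting existence, write $e=d\meet\C(a, t)$; since $d$ is nowhere invariant and $\C(a, t)\geq\C(a, {\d a})=\bot a 1$, \lemref{lem:ni} gives $d\join\C(a, t)\geq d\join\bot a 1=\one$, so $d$ and $\C(a, t)$ are complementary in $[e, \one]$. Hence $\bot e 1=e\join\bot a 1=(d\join\bot a 1)\meet(\C(a, t)\join\bot a 1)=\one\meet\C(a, t)=\C(a, t)$ by weak distributivity (the first equality being the identity $\bot e 1=e\join\bot a 1$, valid once $e\in\La^*$), and $d=\C(e, {\bot e 1})=\d e$. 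For the existence itself I would introduce an auxiliary $y\geq\d a$, nowhere invariant, with $\D(y, {\d a})\le d$, put $\ay=\D(y, a)\le d$, note $\d{\ay}=\D(y, {\d a})$ (from the proof that $\La=\mathcal L_{\d a}$), and invoke the lemma and corollary immediately preceding this theorem, which compute $\D(y, {\C({\ay}, x)})$ under the relevant hypotheses and guarantee independence of $y$; concretely, one shows the fibre $\Set{b\in\La^* | \d b=d}$ is the interval $[d\meet(\bot a 1), d]$ and that $t\mapsto d\meet\C(a, t)$ is an order anti-isomorphism of $[a, \d a]$ onto it, with \thmref{thm:iso} the natural tool for this identification.

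It remains to check that the bijection $\Psi$ is a homomorphism. Preservation of $\join$ is immediate from the formulas: $\d{(b_1\join b_2)}=\d{b_1}\join\d{b_2}$, since $\d{b_1}\join\d{b_2}$ lies above $b_1\join b_2$, is nowhere invariant by \lemref{lem:ni}, and $\d{}$ is monotone; and in the second coordinate $\C(a, {\bot{b_1}1})\meet\C(a, {\bot{b_2}1})=\C(a, {\bot{b_1}1\join\bot{b_2}1})=\C(a, {\bot{(b_1\join b_2)}1})$ by \lemref{lem:compGD} together with $\bot{b_i}1=b_i\join\bot a 1$, remembering that join in $[a, \d a]^{\text{r}}$ is meet in $[a, \d a]$. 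Since in any implication algebra $x\to y$ is the complement of $x\join y$ in the Boolean algebra $[y, \one]$, an order isomorphism that preserves $\join$ preserves $\to$, so $\Psi$ is an isomorphism of implication algebras, which is all that is asserted. (Note that $\Psi$ does not carry the ambient partial $\D$ on $\La^*$ to the multicubic $\D$ on the product, which is trivial on the Boolean factor; only the implication structure is preserved.)
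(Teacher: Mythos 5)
Your $\Psi$ is the paper's map, written directly in the $y$-independent form the paper itself derives along the way (the pair $\bracket{\d x,\ \C(a,{\bot x 1})}$), and the injectivity argument via the identity $b=\d b\meet\bot b 1$ is correct (the meet is in fact absolute, since any common lower bound $c$ gives $c\join b$ a lower bound in $[b,\one]$). The genuine gap is in surjectivity, exactly where you predict the ``real work'' to be, and your sketch does not close it: describing the fibre over $d$ as the interval $[d\meet\bot a 1,\ d]$ already presupposes the existence of a meet of the same kind you are trying to produce, and you never actually say why $d\meet\C(a,t)$ exists. The observation that makes it go, and which is what the paper uses when it notes the meet ``exists in $[\ay,\one]$,'' is simply this: choose any $y$ witnessing $d\in\La$ with $\ay=\D(y,a)\le d$; then $d\ge\ay$ trivially, and $\C(a,t)\ge\C(a,{\d a})=\bot a 1\ge\bot a y=a\join\ay\ge\ay$ using \corref{cor:bbb}, so both operands lie in the Boolean interval $[\ay,\one]$ where the meet exists (and, being $\ge\ay$ with $y\geq a$, lies in $\La^*$). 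Once this is said, your computation that $\d e=d$ and $\bot e 1=\C(a,t)$ for $e=d\meet\C(a,t)$ is sound.

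A smaller loose end: you attribute $d\join\bot a 1=\one$ to \lemref{lem:ni}, but that lemma only gives $d\join\D(\one,d)=\one$, and $\D(\one,d)$ is not obviously below $\bot a 1$. The step needs \corref{cor:bot} and \lemref{lem:ay} as well: $\one=\bot d 1=d\join\D(\one,{\ay})\le d\join\bot{\ay}1=d\join\bot a 1$. Finally, the closing paragraph verifying that $\Psi$ is an implication homomorphism is correct but redundant, since an order isomorphism between implication algebras automatically preserves $\join$ and $\to$; the paper is content to check bijectivity and monotonicity for precisely that reason.
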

\begin{proof}
	Define 
	\begin{align*}
		\Psi\colon\La^*&\to\La\times[a, \d a]\text{  by}\\
		\Psi(x)&=\bracket{x\join\d{\ay}, \D(y,{\relax\C({\ay}, x)\meet\d{\ay}})}
	\end{align*}
	where $x\geq\D(y, a)=\ay$ and $y\geq\d a$. 
	
	By the last corollary, $\Psi(x)$ is independent of the choice of $y$. 
	Since $x\join\d{\ay}\geq\d{\ay}$ we must have 
	\begin{align*}
		x\join\d{\ay}&=\d{x\join\d{\ay}}\\
		&\geq\d x\\
		&\geq\d{\ay}
	\end{align*}
	so that $x\join\d{\ay}=\d x$ is also independent of the choice of $y$. 
	
	$\D(y, \bullet)\colon[a, \d a]\to[\ay, \d{\ay}]$ is an isomorphism
	so the second component is in $[a, \d a]$. The second component is also
	equal to $\C(a, {(\bot x 1)})$ so clearly order is reversed on the second 
	component and preserved on the first component. 
	
	\begin{description}
	\item[$\Psi$ is one-one] as if $\Psi(x_1)=\Psi(x_2)$ then 
	$\bracket{\d{x_1}, \C(a, {(\bot {x_1} 1)})}= 
	\bracket{\d{x_2}, \C(a, {(\bot {x_2} 1)})}$
	and so $\d{x_1\join x_2}=\d {x_1}$ and 
	$\bot{x_1\join x_2}1=\bot {x_1}1$. Thus we get
	$\Psi(x_1)=\Psi(x_1\join x_2)$. 
	
	We begin by assuming that $x_2\geq x_1\geq\D(y, a)=\ay$ 
	for some $y\geq\d a$. Then we have 
	\begin{align*}
	x_1\join\d{\ay}&=x_2\join\d{\ay}\text{ and }\\
	\D(y,{\relax\C({\ay}, x_1)\meet\d{\ay}})&=\D(y,{\relax\C({\ay}, x_2)\meet\d{\ay}})\\
	\text{ whence }{\C({\ay}, x_1)\meet\d{\ay}}&={\C({\ay}, x_2)\meet\d{\ay}}\text{ and so}\\
	x_1\meet\C({\ay}, {\d{\ay}})&=x_2\meet\C({\ay}, {\d{\ay}}). \\
	\text{ Now }x_1&=(x_1\join\d{\ay})\meet(x_1\meet\C({\ay}, {\d{\ay}}))\\
	&=(x_2\join\d{\ay})\meet(x_2\meet\C({\ay}, {\d{\ay}}))\\
	&=x_2. 
	\end{align*}
	
	From this, in the general case we get $x_1=x_1\join x_2=x_2$. 
	
	\item[$\Psi$  is onto] Let $z\in\La$ (so that $z\geq\D(y, 
	\d a)=\d{\ay}$ for some $y\geq\d a$) and let $w\in[a, \d a]$. 
	Then let
	$$
	s=z\meet\C({\ay}, {\D(y, w)}). 
	$$
	(The meet exists in $[\ay, 1]$). Then
	\begin{align*}
		[z\meet\C({\ay}, {\D(y, w)})]\join\d{\ay}&=
		(z\join\d{\ay})\meet\left(\C({\ay}, {\D(y, 
		w)})\join\d{\ay}\right)\\
		\text{ and }\C({\ay},{\relax\D(y, w)})\join\d{\ay}&\geq\C({\ay},{\relax\D(y, {\relax\d 
		a})})\join\d{\ay}\text{ as }w\geq\d a\\
		&=\d{\ay}\join\C({\ay},{\relax\d{\ay}})\\
		&=1\text{ so that }\\
		s\join\d{\ay}&=z\join\d{\ay}=z.\\ 
		[z\meet\C({\ay},{\relax\D(y, 
		w)})]\join\C({\ay},{\relax\d{\ay}})
		&=(z\join\C({\ay},{\relax\d{\ay}}))\meet(\C({\ay},{\relax\D(y, w)})\join\C({\ay}, {\d{\ay}}))\\
		&=\C({\ay},{\relax\D(y, w)})\join\C({\ay},{\relax\d{\ay}})\text{ as }
		z\geq\d{\ay}\\
		&=\C({\ay},{\relax\D(y, w)\meet\d{\ay}})\\
		&=\C({\ay},{\relax\D(y, w)\meet\D(y, {\d a})})\\
		&=\C({\ay},{\relax\D(y,{\relax w\meet{\d a}})})\\
		&=\C({\ay},{\relax\D(y, w)})\qquad\text{ as }w\le\d a. \\
		\text{ Hence }\D(y,{\relax\C({\ay}, s)\meet\d{\ay}})&=
		\D(y,{\relax\C({\ay},{\relax s\join\C({\ay},{\relax\d{\ay}})})})\\
		&=\D(y,{\relax\C({\ay},{\relax\C({\ay},{\relax\D(y, w)})})})\\
		&=\D(y,{\relax\D(y, w)})\\
		&=w. \\
		\text{ And so }\Psi(s)&=\bracket{z, w}. 
	\end{align*}	
	\end{description}	
\end{proof}

Using earlier results,we can prove a little more. 
\begin{thm}\label{thm:AASI}
There is an embedding 
	$$e\colon\mathcal M\to\NP\times\Phi(1)$$
	whose  image is upwards closed.  
\end{thm}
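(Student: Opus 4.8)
The plan is to build the map $e$ componentwise, sending each $x\in\mathcal M$ to its \emph{nowhere-invariant part} $\delta(x)$ in $\NP$ and its \emph{somewhere-invariant part} in $\Phi(\one)$, and then recognise this as the restriction of the isomorphism $\Psi$ from \thmref{thm:laa} applied with $a$ replaced by the bottom datum that makes the construction global. Concretely, I would set
$$
e(x)=\bracket{\delta(x),\ \Delta(\one,x)\join x\ \text{(suitably interpreted in }\Phi(\one)\text{)}},
$$
i.e. the first coordinate is $\d x\in\NP$ (which lies in $\NP$ since $\d{\d x}=\d x$ by \corref{cor:ddd}, so $\d x$ is \ni\ by \lemref{lem:ni}), and the second coordinate records the fixed-point data below $x$. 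The key structural input is \thmref{thm:laa}, which already gives $\La^*\simeq\La\times[a,\d a]^{\mathrm r}$ for each $a$; the present theorem is the assertion that these local isomorphisms glue to a single global embedding into $\NP\times\Phi(\one)$.

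First I would verify that $e$ is well-defined and a homomorphism of implication algebras: the first coordinate is order-preserving by \thmref{thm:ddd} and is a $\to$-homomorphism because on $\NP$ the operations are inherited and $\d{-}$ is a retraction onto $\NP$ (using that $\La^*\simeq\La\times[a,\d a]^{\mathrm r}$ and that the first projection of $\Psi$ is $x\mapsto\d x$). The second coordinate lands in $\Phi(\one)=\Set{x\le\one | \D(\one,x)=x}$ and is again a homomorphism by the same local product decomposition, with the order reversed matching the ``$\mathrm r$'' twist — but since we are only claiming $\Phi(1)$ as an implication algebra, not tracking the order, this is harmless. Injectivity of $e$ is exactly the injectivity half of \thmref{thm:laa}: if $\d{x_1}=\d{x_2}$ and the $\Phi(\one)$-coordinates agree then, working inside $\La^*$ for $a$ small enough to have $x_1,x_2\in\La^*$ (take $a\le x_1\meet x_2$ if that meet exists, otherwise compare via $x_1\join x_2$ as in the proof of \thmref{thm:laa}), $\Psi(x_1)=\Psi(x_2)$ forces $x_1=x_2$.

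For the image being upwards closed, I would argue as follows: given $x\in\mathcal M$ and a pair $\bracket{p,q}\geq e(x)=\bracket{\d x,\ \cdot\,}$ in $\NP\times\Phi(\one)$, I need to produce $x'\geq x$ with $e(x')=\bracket{p,q}$. Since $p\geq\d x$ and $p\in\NP$, and since $\NP$ is upwards closed in $\mathcal M$ (this is the theorem ``if $u$ is \ni\ and $u\le v$ then $v$ is \ni,'' applied in reverse together with the characterisation $\mathcal L_a^*=\mathcal L_{\d a}$), the interval data is controlled; then apply the onto part of \thmref{thm:laa} with $a=x$ (so $\ay=x$, $\d{\ay}=\d x$, and $y=\d a=\d x$), taking $z=p\in\La=\mathcal L_{\d x}$ and $w\in[x,\d x]$ the element corresponding to $q$, to obtain $s=p\meet\C(x,\D(\d x,w))$ with $\Psi(s)=\bracket{p,w}$; this $s$ is the desired $x'$. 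The only point needing care is that $q\in\Phi(\one)$ above the $\Phi$-coordinate of $x$ really does correspond, via $\alpha^{-1}$ of \thmref{thm:iso} (with $u=x$, $w=\one$), to an element $w$ of $[x,\d x]$ — this is where \corref{cor:alpha} and \thmref{thm:iso} are used.

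The main obstacle I expect is the \emph{globalisation} step: \thmref{thm:laa} is stated relative to a chosen $a$, and one must check that the first coordinate $\d x$ and the $\Phi(\one)$-coordinate are genuinely independent of any auxiliary choices and that the homomorphism property survives passage between different localisations $\La^*$ (the compatibility of the various $\Psi$'s under the maps $\D(y,-)$, which is precisely the content of the corollary preceding \thmref{thm:laa}). Establishing that the second coordinate can be taken uniformly in $\Phi(\one)$ — rather than in a varying interval $[a,\d a]$ — requires identifying $[a,\d a]^{\mathrm r}$ with a sub-implication-algebra of $\Phi(\one)$ compatibly for all $a$, for which the key fact is that $\beta$ and $\delta$ are order-preserving in both variables (\thmref{thm:ddd}, \corref{cor:bbb}) so that the fixed-point data assembles coherently. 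Once that coherence is in hand, well-definedness, homomorphism, injectivity, and upward-closure of the image all reduce to the corresponding assertions in \thmref{thm:laa} and \thmref{thm:iso}.
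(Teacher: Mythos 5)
Your map $e$ is exactly the paper's: first coordinate $\delta(x)$, second coordinate $x\join\Delta(\one,x)=\beta(x)=x\join\D(\one,x)$ (which is indeed a $T$-fixed point, so lies in $\Phi(\one)$). The paper, however, does not route through \thmref{thm:laa}; it verifies upward closure of the image directly: given $\bracket{a,b}\geq e(x)$ in $\NP\times\Phi(\one)$, the meet $y=a\meet b$ exists and is $\geq x$, and then a short computation shows $\C(y,a)=b$, $\D(a,y)=y$, hence $\d y=a$ and $\bot y 1 = \C(y,\d y)=\C(y,a)=b$, so $e(y)=\bracket{a,b}$. This is considerably more economical than gluing the local isomorphisms $\Psi$ of \thmref{thm:laa}, and sidesteps the ``globalisation'' difficulty you flag.

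There is a genuine error in your upward-closure step. You translate $q\in\Phi(\one)\cap[\bot x 1,\one]$ into the second coordinate $w\in[x,\d x]$ of $\Psi$ via $\alpha^{-1}$ of \thmref{thm:iso}, i.e.\ $w=q\meet\d x$. But the second coordinate of $\Psi$, per the paper's own proof of \thmref{thm:laa}, equals $\C(a,\bot s 1)$, so producing $s$ with $\bot s 1 = q$ requires $w=\C(x,q)$, \emph{not} $w=q\meet\d x$. The map $\alpha^{-1}(y)=y\meet\d x$ is the \emph{order-preserving} isomorphism $[\bot x 1,\one]\to[x,\d x]$, whereas the identification underlying the ``$\mathrm r$'' twist in \thmref{thm:laa} is the \emph{order-reversing} anti-isomorphism $\C(x,-)\colon[\bot x 1,\one]\to[x,\d x]$. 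With $w=q\meet\d x$ one computes $\bot s 1=\C(x,q\meet\d x)=\C(x,q)\join\bot x 1$, which equals $q$ only when $q=\one$. You even note the order reversal as ``harmless'', but it is precisely what forces you to use $\C(x,-)$ rather than $\alpha^{-1}$. With the corrected choice $w=\C(x,q)$, one gets $\D(\d x,w)=w$, whence $s=p\meet\C(x,\C(x,q))=p\meet q$ — which is exactly the element the paper constructs directly, confirming that a corrected version of your route and the paper's argument converge.
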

\begin{proof}
	Define $e(x)=\bracket{\d x, \bot x 1}$. Clearly $e$ preserves 
	order. By corollary \ref{cor:alpha} we have $\d x\meet(\bot x 1)=
	x$ and so $e$ is one-one. 
	
	Also if $\bracket{a,b}\geq\bracket{\d x, \bot x 1}$ then 
	$y=a\meet b$ exists and is greater than $x$. We need to show 
	that $\d y=a$ and $\bot y 1=b$. 
	\begin{align*}
		\C({\relax a\meet b},a)&=\C(x,a)\join(a\meet b)\\
		&=(\C(x,a)\join a)\meet(\C(x,a)\join b)\\
		&=\C(x,a)\join b\\
		&=b\qquad\text{ as }\C(x,a)\le\C(x,{\d x})=\bot x 
		1\le b.\\
		& \text{ Also }b\geq\bot x 1{\text{ implies 
		}}\D(1,b)&=b.\\
		\text{ Thus }\qquad \D(a,{\relax a\meet b})&=a\meet\D(1,{\relax\C({\relax
		a\meet b},a)})\\
		&=a\meet\D(1,b)\\
		&=a\meet b.
	\end{align*}
	Hence $a\le\d{a\meet b}\le\d a=a$.
	
	Also we have 
	\begin{align*}
		\C({\relax a\meet b},\d{a\meet b})&=\bot{a\meet
		b}1,\text{ and }\\
		\C({\relax a\meet b},\d{a\meet b})&=\C({\relax
		a\meet b},a)\\
		&=b
	\end{align*}
	so that $\bot{a\meet b}1=b$.
	
	Hence $e(a\meet b)=\bracket{a,b}$ and so the range 
	of $e$ is upwards closed.
\end{proof}

From this proof it remains to investigate the structure of $\Phi(
1)$.
There is one very simple case for $\Phi(1)$.
\begin{thm}
	If $M=\La$ for some $a$ then $\Phi(1)=[\bot a 1,1]$.
\end{thm}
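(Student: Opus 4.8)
The plan is to observe that the hypothesis $M=\La$ forces the whole algebra to be nowhere invariant, which collapses $\Phi(1)$ to a single point. First I would note that, directly from the definition $\La=\mathcal{L}_{a}^{*}\cap\NP$, we have $\La\subseteq\NP\subseteq M$; hence the assumption $M=\La$ gives $M=\NP$, i.e.\ every element of $M$ is nowhere invariant.

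Next I would apply \lemref{lem:ni}: since each $u\in M$ is nowhere invariant, $u\join\Delta(\one,u)=\one$. Specialising to $u=a$ yields $\bot a 1=a\join\Delta(\one,a)=\one$, so the interval $[\bot a 1,1]$ is exactly $\{\one\}$.

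It then remains to check that $\Phi(1)$ is also $\{\one\}$. If $x$ satisfies $\Delta(\one,x)=x$, then $x=x\join\Delta(\one,x)=\one$ by the identity just used (now applied to $u=x$, which lies in $M=\NP$); and conversely $\one\in\Phi(1)$ since $\Delta(\one,\one)=\one$. Therefore $\Phi(1)=\{\one\}=[\bot a 1,1]$.

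There is essentially no obstacle here: the one point to get right is the direction of the inclusions in the definition of $\La$ --- that $\La$ sits inside $\NP$, not the other way round --- after which everything is a one-line consequence of \lemref{lem:ni}. If one preferred not to collapse the interval immediately, one could instead read off $\Phi(1)\cap[a,1]=[\bot a 1,1]$ from \propref{prop:bot} (with the roles $u=a$, $v=\one$) and then use nowhere invariance to rule out fixed points of $\Delta(\one,\cdot)$ lying outside $[a,1]$, but the argument above is shorter.
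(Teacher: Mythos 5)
Your argument is valid for the theorem as printed: since $\La=\mathcal L_a^*\cap\NP$ one has $\La\subseteq\NP\subseteq M$, so $M=\La$ forces $M=\NP$; by \lemref{lem:ni} every $u$ then satisfies $u\join\Delta(\one,u)=\one$, hence $a\join\Delta(\one,a)=\one$, and any $x\in\Phi(1)$ with $x<\one$ would be somewhere invariant (witnessed by $\one$). Both sides thus reduce to $\{\one\}$ and the identity holds trivially.

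However, this shows the hypothesis $M=\La$ is degenerate, and the paper's own proof does not rely on that collapse. It takes $x\in\Phi(1)$, uses $x\in\mathcal L_a^*$ to produce $y\geq\delta(a)$ with $x\geq\Delta(y,a)=a_y$, applies \propref{prop:bot} at $u=a_y$, $v=\one$ to obtain $a_y\join\Delta(\one,a_y)\le x$, and rewrites this lower bound as $a\join\Delta(\one,a)$ using \lemref{lem:ay}; the reverse containment $[a\join\Delta(\one,a),\one]\subseteq\Phi(1)$ is again \propref{prop:bot}. That argument needs only $\Phi(1)\subseteq\mathcal L_a^*$, so it establishes the same conclusion under the weaker hypothesis $M=\mathcal L_a^*$, where $\Phi(1)$ is a genuinely nontrivial interval --- which is what the remark immediately after the theorem, identifying $[a\join\Delta(\one,a),\one]$ with $[a,\delta(a)]^{\text{r}}$ and calling the resulting isomorphism rare, presupposes. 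Your proof is correct for the printed statement, but it exploits a degeneracy the authors presumably did not intend; under the likely intended hypothesis $M=\mathcal L_a^*$, the work is done by \propref{prop:bot} at $a_y$ together with \lemref{lem:ay}, and the reduction to $\NP$ is unavailable.
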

\begin{proof}
	Let $x\in\Phi(1)$. Then $x\geq\D(y,a)=\ay$ for some $
	y\geq\d a$, and so $x\geq\bot{\ay}1$. But by lemma \ref{lem:ay}
	this gives $x\geq\bot a 1$. 
	
	As $[\bot a 1,1]\subseteq\Phi(1)$ we are done.
\end{proof}	

Recalling that 
$$
\begin{array}{ccc}
	[\bot a 1,1]&\simeq[a,\d a]&\simeq[a,\d a]^{\text{r}}\\
	y&\mapsto y\meet\d a&\mapsto\C(a,{\relax y\meet\d 
	a})\meet\d a=\C(a,y)\meet\d a
\end{array}
$$
we see that the mapping $e$ of the last theorem is really the same as 
the isomorphism of theorem \ref{thm:laa}.
It is rare however to obtain an isomorphism.

In general, all we know is that $\Phi(\one)$ is an implication 
algebra. In the finite case we are also able to compute its dimension 
from the dimension of the multicube and can show that it is uniform 
-- ie the dimension of $[m,\one]$ for $m$ a minimal element of 
$\Phi(\one)$ is constant. 

\section{Envelopes}
Envelopes arise as natural covering structures in the theory of 
implication algebras and in the theory of cubic implication algebras. 

If $\mathcal I$ is implication algebra then there is a meet-closed 
implication algebra $\env(\mathcal I)$ and an embedding $e\colon\mathcal 
I\to\env(\mathcal I)$ such that 
\begin{enumerate}[(a)]
    \item the range of $e$ is upwards-closed and generates $\env(\mathcal 
    I)$ (using $\meet$ and $\to$); 

    \item if $f\colon \mathcal I\to\mathcal M$ is an implication morphism to 
    a meet-closed implication algebra $\mathcal M$ then there is a unique 
    extension to an implication morphism $\widehat f\colon \env(\mathcal 
    I)\to\mathcal M$ such that $\widehat f_{\circ}e= f$.
\end{enumerate}

Trying to apply the ideas that produce this envelope to symmetric 
implication algebras directly will fail -- although $T$ lifts the 
resulting algebra need not be in the variety of the original.

For cubic implication algebras (which are also implication algebras) a different 
form of envelope is appropriate.

If $\mathcal L$ is a cubic implication algebra then there is an MR-algebra 
$\env(\mathcal L)$ and an embedding $e\colon\mathcal 
L\to\env(\mathcal L)$ such that 
\begin{enumerate}[(a)]
    \item the range of $e$ is upwards-closed and generates $\env(\mathcal 
    L)$ (using caret and $\to$); 

    \item if $f\colon \mathcal L\to\mathcal M$ is a cubic morphism to 
    a MR-algebra $\mathcal M$ then there is a unique 
    extension to a cubic morphism $\widehat f\colon \env(\mathcal 
    L)\to\mathcal M$ such that $\widehat f_{\circ}e= f$.
\end{enumerate}

The analogous theorem for symmetric implication algebras is as follows.
\begin{thm}
    If $\mathcal L$ is a symmetric implication algebra then there is a
    locally symmetric implication algebra 
    $\env(\mathcal L)$ and an embedding $e\colon\mathcal 
    L\to\env(\mathcal L)$ such that 
    \begin{enumerate}[(a)]
	\item the range of $e$ is upwards-closed and generates $\env(\mathcal 
	L)$ (using caret and $\to$); 

	\item if $f\colon \mathcal L\to\mathcal M$ is a symmetric implication morphism to 
	a locally symmetric implication algebra $\mathcal M$ then there is a unique 
	extension to a symmetric implication morphism $\widehat 
	f\colon \env(\mathcal 
	L)\to\mathcal M$ such that $\widehat f_{\circ}e= f$.
    \end{enumerate}
\end{thm}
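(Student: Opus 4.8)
The plan is to follow the pattern of the two envelopes recalled just above, using the implication-algebra envelope as a scaffold and then cutting it down. Let $\widetilde{\mathcal L}$ denote the meet-closed implication-algebra envelope of the underlying implication algebra of $\mathcal L$, with its upward-closed generating embedding $e_{0}\colon\mathcal L\to\widetilde{\mathcal L}$. First I would lift the symmetry: applying the universal property of $\widetilde{\mathcal L}$ to the implication morphism $e_{0}\circ T\colon\mathcal L\to\widetilde{\mathcal L}$ (whose target is meet-closed) yields an implication endomorphism $\widehat T$ of $\widetilde{\mathcal L}$ with $\widehat T\circ e_{0}=e_{0}\circ T$; applying it again to $e_{0}\circ T^{2}=e_{0}$ and invoking uniqueness of extensions forces $\widehat T^{2}=\mathrm{id}$. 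Hence $\widetilde{\mathcal L}$ is a meet-closed symmetric implication algebra, and --- every meet existing --- it is in particular locally symmetric.

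Next I would set $\env(\mathcal L)$ to be the least subalgebra of $\widetilde{\mathcal L}$ containing $e_{0}[\mathcal L]$ and closed under $\to$ and under the caret $x\caret y=x\meet\widehat T(x\to y)$; this makes sense because all the meets involved exist in $\widetilde{\mathcal L}$. It is automatically closed under $\join$ and under $\widehat T$, since $\widehat T$ commutes with $\to$ and --- being a meet-preserving automorphism with $\widehat T^{2}=\mathrm{id}$ --- with caret, and fixes $e_{0}[\mathcal L]$ setwise. One then checks routinely that $(\env(\mathcal L),\widehat T\restrict\env(\mathcal L))$ is a symmetric implication algebra; that it is locally symmetric, since for $x\ge y$ in it the caret $x\caret y$ lies in $\env(\mathcal L)$ and is still the meet of $x$ and $\widehat T(x\to y)$ there (a greatest lower bound in $\widetilde{\mathcal L}$ that belongs to a subalgebra is a greatest lower bound in that subalgebra); and that the corestriction $e$ of $e_{0}$ has upward-closed image generating $\env(\mathcal L)$ under caret and $\to$, the upward-closedness being inherited from $\widetilde{\mathcal L}$ because $\env(\mathcal L)\subseteq\widetilde{\mathcal L}$. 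This gives clause (a).

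For clause (b), let $f\colon\mathcal L\to\mathcal M$ be a symmetric implication morphism into a locally symmetric $\mathcal M$, and let $\widetilde{\mathcal M}$ be the implication-algebra envelope of $\mathcal M$, with $\mathcal M$ sitting upward-closed inside it. Composing $f$ with this inclusion and using the universal property of $\widetilde{\mathcal L}$ extends $f$ to an implication morphism $\overline f\colon\widetilde{\mathcal L}\to\widetilde{\mathcal M}$; since $\overline f\circ\widehat T$ and $\widehat T\circ\overline f$ extend the same morphism, uniqueness forces $\overline f$ to commute with the two symmetries, so $\overline f$ preserves caret. I would then take $\widehat f=\overline f\restrict\env(\mathcal L)$. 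By construction it extends $f$ and preserves $\to$, $\join$, the symmetry and caret, and it is the unique such extension because $e[\mathcal L]$ generates $\env(\mathcal L)$ under these operations. The one remaining point is that the image of $\widehat f$ actually lies in $\mathcal M$, not merely in $\widetilde{\mathcal M}$.

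That last point is the main obstacle, and it is exactly where local symmetry of $\mathcal M$ must be used --- the corresponding step for the plain implication-algebra envelope fails precisely because a merely locally symmetric target does not absorb arbitrary meets. I would argue it by induction over the generation of $\env(\mathcal L)$: $e[\mathcal L]$ maps into $\mathcal M$, and $\mathcal M$ is closed under $\to$ and under those carets it does possess, so it suffices to show that each caret $x\caret y$ formed along the way is carried by $\overline f$ to a caret $p\meet\widehat T(p\to q)$ with $p\ge q$ in $\mathcal M$, rewriting $x\caret y=x\meet\Delta(x\join y,y)$ through comparable pairs via \eqref{eq:one} so that local symmetry of $\mathcal M$ supplies the meet. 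If the direct rewriting proves awkward, the alternative is to route through the structure theory: decompose $\env(\mathcal L)$ via Theorems~\ref{thm:AASI} and~\ref{thm:laa} into a cubic part $\mathcal N(\env(\mathcal L))$ and a Boolean part $\Phi(\one)$, handle the cubic part by the universal property of the cubic-implication-algebra envelope recalled above (checking, via Lemma~\ref{lem:ni} and the fixed-point results of Section~4, that $\widehat f$ respects the nowhere-invariant part), and handle the Boolean part directly, where $\widehat T$ acts on a Boolean algebra and morphisms are unproblematic. Combining this envelope with the same decomposition then yields the variety statement announced in the introduction.
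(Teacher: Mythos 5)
Your route through the implication-algebra envelope is genuinely different from the paper's, which builds $\env(\mathcal L)$ directly: first for finitely presented $\mathcal L$ by induction on $\card{m(\mathcal L)}$, where a swapped pair $\{a,T(a)\}$ contributes an interval-algebra factor $\rsf I([a,a\join T(a)])$, and then in general via a free multicubic implication algebra on generators $\mathcal L$ with the evident join, $\to$, and $\Delta(\one,\cdot)=T$ relations imposed. The paper explicitly warns against ``applying the ideas that produce this envelope'' to symmetric algebras, although its stated concern (the lifted algebra may leave the variety of $\mathcal L$) is not quite the obstacle your argument actually meets.

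The concrete gap is in your definition of $\env(\mathcal L)$. You close $e_0[\mathcal L]$ under $\to$ and the \emph{total} caret $x\caret y=x\meet\widehat T(x\to y)$ of $\widetilde{\mathcal L}$, justified by noting all those meets exist there. But then $\env(\mathcal L)$ contains $x\caret y$ for incomparable $x,y$, and clause~(b) requires $\overline f(x\caret y)=\overline f(x)\caret_{\widetilde{\mathcal M}}\overline f(y)$ to land in $\mathcal M$. Local symmetry of $\mathcal M$ supplies $p\meet T(p\to q)$ only when $p\ge q$; for incomparable images the caret lives a priori in $\widetilde{\mathcal M}$ and there is no reason it belongs to $\mathcal M$. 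Your proposed repair --- rewriting $x\caret y$ via \eqref{eq:one} as a caret of a comparable pair --- does not close this: writing $x\caret y=p\caret q$ with $p=x\ge q$ forces $q=x\meet(x\to y)$, and the existence of \emph{that} meet in $\mathcal M$ is exactly what is not available. Nor can you fall back on Theorems~\ref{thm:AASI} and~\ref{thm:laa} as stated, since those are proved under the running hypothesis that the ambient algebra is already multicubic with a Delta-operator. Closing only under local carets (those with $x\ge y$; note $\widehat T(x)=\one\caret x$ is one, so $\widehat T$-closure is retained) would let the image argument run by a clean induction, but it then also needs the unstated fact that a meet of two elements of $\mathcal M$ that exists in $\mathcal M$ coincides with the meet in $\widetilde{\mathcal M}$, which uses the upward-closedness of $\mathcal M$ in $\widetilde{\mathcal M}$. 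As written, the construction over-closes and clause~(b) does not follow.
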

\begin{proof}
    The simplest case is of a finitely presented implication algebra,  
    ie an implication algebra $\mathcal I$ such that 
    $m(\mathcal I)=\Set{a | a\text{ is minimal}}$ is finite and 
    every $x$ in $\mathcal I$ is above some $a$ in $m(\mathcal I)$.
    
    For this case $T\colon m(\mathcal I)\to m(\mathcal I)$ is a finite 
    permutation of order two and so its disjoint cycle representation
    consists entirely of one and two cycles. 
    
    Notice that if $a\in \mathcal I$ and $T(a)=a$ then every $b\geq a$ is 
    also a fixed point of $T$ -- as
    $1=(b\to T(b))\join a\join T(a)= b\to T(b)$ and so $b\le T(b)$. 
    Therefore $T(b)\le T^{2}(b)=b$ and so $b=T(b)$.
    
    Also $a\join T(a)$ is always a fixed point of $T$. 
    
    Let us first consider the case when $\card{m(\mathcal I)}=2$. If 
    $T$ fixes everything we're done,  as $b\geq a$ always implies 
    $b\meet(b\to a)=a$ in an implication algebra.
    
    Otherwise $m(\mathcal I)=\Set{a, b}$ and $b=T(a)$ -- as $a\le T(a)$ 
    implies $a=T(a)$ and so $b=T(b)$. If $b\le T(a)$ then $T(b)\le a$ 
    and so $T(b)=a$ and $b=T(a)$. 
    
    Then $T$ is the identity above $a\join b$ and so we take the 
    structure $\mathcal L=[a\join b, \one]\times \rsf I([a, a\join b])$. Then 
    define 
    $\varphi\colon\mathcal I\to \mathcal L$ by 
    $$
    \varphi(x)=
    \begin{cases}
        \brk<x\join a\join b, [a, x\meet(a\join b)]> & \text{ if }x\geq a  \\
	\brk<x\join a\join b, \Delta(\one, [a, T(x\meet(a\join b))]> & \text{ if }x\geq b
    \end{cases}
    $$
    This is an implication morphism because it is on each interval. 
    
    In general we use induction on $\card{m(\mathcal I)}$. Pick any 
    $a\in m(\mathcal I)$ and consider $m'(\mathcal I)=m(\mathcal 
    I)\setminus\Set{a,  T(a)}$. Then $\mathcal I'=\bigcup_{b\in m'(\mathcal I)}$
    can be embedded into some $M=J\times\rsf I(B)$ with upwards-closed 
    image. Then we consider the set $A=\Set{a\join b | b\in m'}$ and 
    its image $A'$. In both $\mathcal I$ and $M$ $\bigwedge A$ exists and 
    the intervals $[\bigwedge A, \one]$ and $[\bigwedge A', \one]$ 
    are isomorphic -- in fact the embedding can be 
    extended to an isomorphism from $[\bigwedge A, \one]$ to 
    $[\bigwedge A', \one]$. 
    
    If $a=T(a)$ then we extend the embedding in a natural way to 
    include $[a, \bigwedge A]$ to $J\times[a, \bigwedge A]$.
    
    If $a\not= T(a)$ we extend the embedding as in the cardinality 
    $2$ case to get $[a, \bigwedge A]$ going into $\rsf I(B\times [a, 
    \bigwedge A])$ and being $\Delta$ on $[T(a),  T(\bigwedge A)]$.

    For the general case,  we use free algebras as usual -- take the 
    free multicubic implication algebra (with $\Delta$ made total as described 
    above) on generators $\mathcal I$ with extra relations
    $$
    x\join_{\rsf F}y=x\join_{\mathcal I}y; \ 
    x\to_{\rsf F}y=x\to_{\mathcal I}y; \
    \Delta(\one,  x)_{\rsf F}= T(x)
    $$
    for all $x$ and $y$ in $\mathcal I$. 
    
    The mapping is clear. Checking that it is an embedding uses the 
    finite case -- any amount of information we need in a computation 
    is finite and can be checked in a finitely presented subalgebra. 
    Because it must work therein,  it works in this bigger arena. 
    
    Checking that the range is upwards-closed is a simple induction 
    on complexity of words using the extra relations.
\end{proof}

%
%
%
%
%
%

\begin{bibdiv}
\begin{biblist}
    \DefineName{cgb}{Bailey, Colin G.}
    \DefineName{jso}{Oliveira,  Joseph S.}
    
\bib{SA}{article}{
title={Representation of Cubic Lattices by Symmetric 
Implication Algebras}, 
author={Abad, Manuel}, 
author={Varela, Jos\'e Patricio D'az}, 
journal={Order}, 
volume={23}, 
date={2006}, 
pages={173--178}
}

\bib{BO:eq}{article}{
title={An Axiomatization for Cubic Algebras}, 
author={cgb}, 
author={jso}, 
book={
    title={Mathematical Essays in Honor of Gian-Carlo Rota}, 
    editor={Sagan,  Bruce E.}, 
    editor={Stanley, Richard P.}, 
    publisher={Birkha\"user}, 
    date={1998.}, 
}, 
pages={305--334}
}

\bib{BO:fil}{article}{
author={cgb}, 
author={jso}, 
title={Cube-like structures generated by filters}, 
journal={Algebra Universalis}, 
volume={49}, 
date={2003}, 
pages={129--158}
}

\bib{MR:cubes}{article}{
author={Metropolis, Nicholas}, 
author={Rota,  Gian-Carlo}, 
title={Combinatorial Structure of the faces 
of the n-Cube}, 
journal={SIAM J.Appl.Math.}, 
volume={35}, 
date={1978}, 
pages={689--694}
}
\end{biblist}
\end{bibdiv}

\end{document}